\newtheorem{theorem}{Theorem}[section]
\newtheorem{lemma}[theorem]{Lemma}
\newtheorem{proposition}[theorem]{Proposition}
\newtheorem{corollary}[theorem]{Corollary}
\newtheorem{example}[theorem]{Example}
\theoremstyle{plain}
\theoremstyle{definition}
\newtheorem{definition}[theorem]{Definition}
\numberwithin{equation}{section}
\renewcommand{\labelenumi}{\textup{(\theenumi)}}
\title{Cohomology groups, continuous full groups  and\\
 continuous orbit equivalence of topological Markov shifts \\
%(COEandFullgroups5.tex)
}
\author{Kengo Matsumoto \\
Department of Mathematics \\
Joetsu University of Education \\
Joetsu, 943-8512, Japan
}
\begin{document}

%\date{2020, Sept 7}

\maketitle

\date{}

\def\det{{{\operatorname{det}}}}

%\maketitle
\begin{abstract}
We will study several subgroups of continuous full groups of one-sided topological Markov shifts
from the view points of cohomology groups of full group actions on the shift spaces.
We also study continuous orbit equivalence and strongly continuous orbit equivalence in terms of these
 subgroups of the continuous full groups and the cohomology groups.
\end{abstract}

%{\it Mathematics Subject Classification}:
% Primary 46L55; Secondary 46L35, 37B10.

%{\it Keywords and phrases}:
%Topological Markov shifts, orbit equivalence,  Cuntz--Krieger algebras

%%%%%%%%%%%%%%%%%%%%%%%%%%%%%%
%2010{\it Mathematics Subject Classification}:
% Primary 46L55; Secondary 46L35, 37B10.

%{\it Keywords and phrases}:
%Topological Markov shifts, orbit equivalence,  Cuntz-Krieger algebras

\newcommand{\Ker}{\operatorname{Ker}}
\newcommand{\sgn}{\operatorname{sgn}}
\newcommand{\Ad}{\operatorname{Ad}}
\newcommand{\ad}{\operatorname{ad}}
\newcommand{\orb}{\operatorname{orb}}

\def\Re{{\operatorname{Re}}}
\def\det{{{\operatorname{det}}}}
\newcommand{\K}{\mathbb{K}}

\newcommand{\N}{\mathbb{N}}
\newcommand{\C}{\mathbb{C}}
\newcommand{\R}{\mathbb{R}}
\newcommand{\Rp}{{\mathbb{R}}^*_+}
\newcommand{\T}{\mathbb{T}}
\newcommand{\Z}{\mathbb{Z}}
\newcommand{\Zp}{{\mathbb{Z}}_+}
\def\AF{{{\operatorname{AF}}}}

\def\OA{{{\mathcal{O}}_A}}
\def\OB{{{\mathcal{O}}_B}}
\def\OTA{{{\mathcal{O}}_{\tilde{A}}}}
\def\SOA{{{\mathcal{O}}_A}\otimes{\mathcal{K}}}
\def\SOB{{{\mathcal{O}}_B}\otimes{\mathcal{K}}}
\def\SOTA{{{\mathcal{O}}_{\tilde{A}}\otimes{\mathcal{K}}}}
\def\FA{{{\mathcal{F}}_A}}
\def\FB{{{\mathcal{F}}_B}}
\def\DA{{{\mathcal{D}}_A}}
\def\DB{{{\mathcal{D}}_B}}
\def\DZ{{{\mathcal{D}}_Z}}
\def\DTA{{{\mathcal{D}}_{\tilde{A}}}}
\def\Ext{{{\operatorname{Ext}}}}
\def\Max{{{\operatorname{Max}}}}
\def\Per{{{\operatorname{Per}}}}
\def\PerB{{{\operatorname{PerB}}}}
\def\Homeo{{{\operatorname{Homeo}}}}
\def\HA{{{\frak H}_A}}
\def\HB{{{\frak H}_B}}
\def\HSA{{H_{\sigma_A}(X_A)}}
\def\Out{{{\operatorname{Out}}}}
\def\Aut{{{\operatorname{Aut}}}}
\def\Ad{{{\operatorname{Ad}}}}
\def\Inn{{{\operatorname{Inn}}}}
\def\det{{{\operatorname{det}}}}
\def\exp{{{\operatorname{exp}}}}
\def\nep{{{\operatorname{nep}}}}
\def\sgn{{{\operatorname{sign}}}}
\def\cobdy{{{\operatorname{cobdy}}}}
\def\Ker{{{\operatorname{Ker}}}}
\def\ind{{{\operatorname{ind}}}}
\def\id{{{\operatorname{id}}}}
\def\supp{{{\operatorname{supp}}}}
\def\co{{{\operatorname{co}}}}
\def\scoe{{{\operatorname{scoe}}}}
\def\ucoe{{{\operatorname{ucoe}}}}
\def\coe{{{\operatorname{coe}}}}
\def\HGFE{{{\operatorname{H_GFE}}}}
\def\HG1FE{{{\operatorname{H_G^1FE}}}}

\def\S{\mathcal{S}}

%%%%%%%%%%%%%%%%%%%%%%%%%%%%%%%%%%%
\def\coe{{{\operatorname{coe}}}}
\def\scoe{{{\operatorname{scoe}}}}
\def\uoe{{{\operatorname{uoe}}}}
\def\ucoe{{{\operatorname{ucoe}}}}
\def\event{{{\operatorname{event}}}}
%%%%%%%%%%%%%%%%%%%%%%%%%%%
%%%%%%%%%%%%%%%%%%%%%%%%%%%%%%%%%%%%%%%

%%%%%%%%%%%%%%%%%%%%%%%%%%%%%%%%%%%%%%%%%%
\section{Introduction and Preliminary}
%%%%%%%%%%%%%%%%%%%%%%%%%%%%%%%%%%%%%%
For an irreducible non-permutation matrix $A =[A(i,j)]_{i,j=1}^N$ with entries in $\{0,1\},$  a right one-sided topological Markov shifts denoted by $(X_A, \sigma_A)$
consists of the set $X_A$ of right one-sided sequences
$(x_n)_{n\in \N}$ of $x_n \in \{1,2,\dots,N\}$ 
such that $A(x_n, x_{n+1}) =1, n\in \N$, and the one-sided shift
$\sigma_A((x_n)_{n\in \N}) = (x_{n+1})_{n\in \N}$. 
The set $X_A$ is endowed with its infinite product topology to make it 
a compact Hausdorff space, so that $\sigma_A:X_A\longrightarrow X_A$
is a continuous surjection. 
For general theory of symbolic dynamical systems, see the text books \cite{Ki} and \cite{LM}. 
Throughout the paper, matrices $A$ and $B$ are assumed to be square, irreducible
non-permutation matrices with entries in $\{0,1\}.$
Let us denote by $C(X_A,\Z)$ the abelian group of continuous functions from $X_A$ to $\Z$.
The symbol $\Zp$ denotes the set of nonnegative integers.

In \cite{MaPacific}, the author introduced an equivalence relation 
in one-sided topological Markov shifts, called continuous orbit equivalence.
The notion has been closely related to the classification of 
not only a certain class of $C^*$-algebras called the Cuntz--Krieger algebras $\OA$, 
but also a certain class class of countable infinite discrete non-amenable groups $\Gamma_A$
called the continuous full groups associated to $(X_A,\sigma_A)$. 
One-sided topological Markov shifts
$(X_A, \sigma_A)$ and $(X_B,\sigma_B)$ 
are said to be continuously orbit equivalent written
$(X_A, \sigma_A)\underset{\coe}{\sim}(X_B,\sigma_B)$ 
if there exist a homeomorphism
$h: X_A \rightarrow X_B$  and continuous functions 
$k_1,l_1: X_A\rightarrow \Zp,
 k_2,l_2: X_B\rightarrow \Zp 
$
such that
\begin{align}
\sigma_B^{k_1(x)} (h(\sigma_A(x))) 
& = \sigma_B^{l_1(x)}(h(x)) \quad \text{ for } \quad 
x \in X_A,  \label{eq:orbiteq1x} \\
\sigma_A^{k_2(w)} (h^{-1}(\sigma_B(w))) 
& = \sigma_A^{l_2(w)}(h^{-1}(w)) \quad \text{ for } \quad 
w \in X_B. \label{eq:orbiteq2y}
\end{align}
Two continuous functions $c_1: X_A\longrightarrow \Z$ and 
 $c_2: X_B\longrightarrow \Z$ 
defined by
$c_1 :=  l_1 - k_1$
and
$c_2 :=  l_2 - k_2$
are called the cocycle functions for 
the continuous orbit equivalent map
$h:X_A\longrightarrow X_B.$
H. Matui and the author proved that
$(X_A, \sigma_A)\underset{\coe}{\sim}(X_B,\sigma_B)$ if and only if 
the Cuntz--Krieger algebra $\OA$ is isomorphic to 
the Cuntz--Krieger algebra $\OB$ and $\sgn(\det(\id -A)) =\sgn(\det(\id -B))$ 
(\cite{MMKyoto}, cf. \cite{MaPAMS2013}), where $\sgn$ is the plus or minus signature.
The proof of the only if part given in \cite{MMKyoto}
is based on groupoid technique.
In \cite{MMETDS}, it was shown that for a homeomorphism
$h:X_A\longrightarrow X_B$ that gives rise to
$(X_A, \sigma_A)\underset{\coe}{\sim}(X_B,\sigma_B)$,
the map $\varPsi_h: C(X_B,\Z)\longrightarrow C(X_A,\Z)$
defined by
\begin{equation}\label{eq:Psihf}
\varPsi_h(f)(z) = \sum_{i=0}^{l_1(z)}f(\sigma_B^i(h(z))) -
 \sum_{j=0}^{k_1(z)}f(\sigma_B^j(h(\sigma_A(z))),
\qquad f \in C(X_B,\Z), \, z \in X_A
\end{equation} 
induces an ordered isomorphism between the ordered cohomology groups
$(H^A, H^A_+)$ and $(H^B, H^B_+)$.
It implies the determinant condition
$\sgn(\det(\id -A)) =\sgn(\det(\id -B))$
 by the results of \cite{PS} and \cite{BH}.
Studies for more general matrices related to graph algebras have been seen in
several papers (cf. \cite{CEOR}, \cite{CRS}, etc.)

In classification under orbit equivalences  
of topological dynamical systems, 
subgroups called full groups of homeomorphisms preserving orbit structure of the dynamics
have played a crucial r\^ole as seen in 
\cite{GPS2}, \cite{MatuiPLMS}, \cite{Tom2}, etc.
For the class of one-sided topological Markov shifts, 
 the author in \cite{MaPacific}
 introduced the continuous full group $\Gamma_A$
that 
has been written $[\sigma_A]_c$ in the earlier papers
(\cite{MaPacific}, \cite{MaDCDS}).
The group
$\Gamma_A$ is a subgroup of the group
$\Homeo(X_A)$ of
homeomorphisms on $X_A$
consisting of 
$\tau\in \Homeo(X_A)$  
such that there exist continuous functions 
$k_\tau, l_\tau : X_A \rightarrow \Zp$ 
satisfying
\begin{equation}
\sigma_A^{k_\tau(x)}(\tau(x) )=\sigma_A^{l_\tau(x)}(x)
\quad\text{ for all } x \in X_A. \label{eq:tau}
\end{equation}
For each $\tau \in \Gamma_A$, 
the function $d_\tau: X_A \longrightarrow \Z$ is defined by
$d_\tau = l_\tau - k_\tau$. 
It does not depend on the choice of $l_\tau$ and $k_\tau$ (\cite[Lemma 7.6]{MMGGD}).
The group $\Gamma_A$ is a countably infinite discrete non-amenable group
(\cite{MaDCDS}, see \cite{MatuiPLMS} for more general results). 
H. Matui has shown that 
the group $\Gamma_A$, written $[G_A]$ in \cite{MatuiPLMS}, \cite{MatuiCrelle}, 
 itself has several remarkable properties.
He proved that
$\Gamma_A$
is not only finitely generated but also finitely presented
for every $A$ as a group.
He also found simplicity condition on th groups $\Gamma_A$
and pointed out an interesting relationship to Thompson groups
and Higman--Thompson groups (see \cite{MatuiCrelle},
cf. \cite{MMGGD}).

For the Cuntz--Krieger algebra $\OA$ with its canonical generating partial isometries
$S_1,\dots,S_N$ satisfying $\sum_{i=1}^N S_j S_j^* =1$
and $S_i^* S_i =\sum_{i=1}^N A(i,j)S_j S_j^* =1, \, i=1,\dots,N$,
let us denote by $\DA$ the commutative $C^*$-subalgebra of $\OA$
generated by the projections of the form 
$S_\mu S_\mu^*$ for all admissible words $\mu=(\mu_1,\dots, \mu_n)$ of $X_A$, 
where 
$S_\mu$ denotes $S_{\mu(1)}\cdots S_{\mu(n)}$.
The gauge action $\rho^A: t \in \R/\Z =\T \longrightarrow \rho^A_t\in \Aut(\OA)$ 
is defined by $\rho^A_t(S_j) = e^{2 \pi \sqrt{-1}t} S_j, \, j=1,\dots,N$.
A function $f \in C(X_A,\Z)$ is regarded as an element of $\DA$
and then written as 
$f = \sum_{i=}^n f_i S_{\mu(i)} S_{\mu(i)}^*$
for some integers $f_i\in \Z$ 
and admissible words $\mu(i), i=1,\dots,n$ of $X_A$
satisfying $\sum_{i=1}^n S_{\mu(i)}S_{\mu(i)}^*=1$.
Then an element $e^{2 \pi \sqrt{-1}t f}$ in $\DA$
for $t \in \T$ is regarded as the unitary
$\sum_{i=1}^n e^{2 \pi \sqrt{-1} f_i t}S_{\mu(i)}S_{\mu(i)}^*$,
and the generalized gauge action 
$\rho^{A,f}: t \in \T \longrightarrow \rho^{A,f}_t\in \Aut(\OA)$ 
is defined by $\rho^{A,f}_t(S_j) = e^{2 \pi \sqrt{-1}t f} S_j, \, j=1,\dots,N$.

Let $A$ and $B$  be irreducible, non permutation matrices over $\{0,1\}$.
In \cite[Corollary 1.2]{MaIsrael2015}, 
the following characterization of continuous orbit equivalence was proved. 
\begin{theorem}[{\cite[Corollary 1.2]{MaIsrael2015}}]
The following three conditions are equivalent:
\begin{enumerate}
\renewcommand{\theenumi}{\roman{enumi}}
\renewcommand{\labelenumi}{\textup{(\theenumi)}}
\item $(X_A,\sigma_A)$ and $(X_B,\sigma_B)$ are continuously orbit equivalent. 
\item There exists an isomorphism $\Phi:\OA\longrightarrow \OB$ 
such that 
$\Phi(\DA) = \DB$.
\item  
The continuous full groups $\Gamma_A$ and $\Gamma_B$ are isomorphic 
as groups.
\end{enumerate}
\end{theorem}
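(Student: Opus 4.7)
The plan is to establish the cyclic chain (i) $\Rightarrow$ (ii) $\Rightarrow$ (iii) $\Rightarrow$ (i). The first two implications are essentially constructive, while (iii) $\Rightarrow$ (i) is the main obstacle since it requires recovering the topological dynamics from a purely group-theoretic datum.

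For (i) $\Rightarrow$ (ii), given a homeomorphism $h:X_A\to X_B$ with cocycle functions $k_1,l_1,k_2,l_2$, the plan is to construct partial isometries $T_i \in \OB$ that play the role of the canonical generators $S_i$ of $\OA$. Decomposing $X_A$ into the cylinder sets $U_i = \{x \in X_A : x_1 = i\}$ and using \eqref{eq:orbiteq1x} to express $h\circ \sigma_A\circ h^{-1}$ locally as compositions of iterated shifts on $X_B$, one writes $T_i$ as finite sums of the form $S_{\mu}S_{\nu}^*$ over admissible words $\mu,\nu$ of $X_B$ determined by a finite clopen partition on which $k_1,l_1$ are constant. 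The cocycle identities ensure the Cuntz--Krieger relations for $A$ hold, producing a $*$-homomorphism $\Phi:\OA\to\OB$; a symmetric construction based on $h^{-1}$ and \eqref{eq:orbiteq2y} yields the inverse. By construction, $\Phi$ restricts on $\DA$ to the $*$-isomorphism $C(X_A)\to C(X_B)$ induced by $h$, so $\Phi(\DA)=\DB$.

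For (ii) $\Rightarrow$ (iii), the idea is that $\Gamma_A$ admits an intrinsic description in terms of the pair $(\OA,\DA)$. Concretely, every $\tau \in \Gamma_A$ corresponds to a unitary normalizer $u_\tau \in \OA$ of $\DA$ whose induced homeomorphism on the spectrum $X_A$ of $\DA$ coincides with $\tau$, and the elements of $\Gamma_A$ are exactly those normalizers whose local structure comes from the Deaconu--Renault groupoid of $\sigma_A$. That groupoid can be reconstructed from $(\OA,\DA)$ via the Cartan/diagonal spectral theorem, so a diagonal-preserving isomorphism $\Phi:\OA\to\OB$ conjugates these normalizers and transports $\Gamma_A$ onto $\Gamma_B$, producing the desired group isomorphism.

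The implication (iii) $\Rightarrow$ (i) will be the hardest step, since the input is only an abstract isomorphism of discrete groups, with no a priori continuity or dynamical content. I would appeal to Matui's spatial realization theorem in \cite{MatuiPLMS}: the group $\Gamma_A$ equals the topological full group $[G_A]$ of the Deaconu--Renault groupoid $G_A$ of $(X_A,\sigma_A)$, which is \'etale, minimal, and essentially principal, and for such groupoids an abstract isomorphism $[G_A]\cong [G_B]$ is automatically implemented by a groupoid isomorphism $G_A\cong G_B$. Restricting the latter to unit spaces produces a homeomorphism $h:X_A\to X_B$ that carries $G_A$-orbits to $G_B$-orbits; since these orbits coincide with the forward shift orbits, $h$ is a continuous orbit equivalence. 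The cocycle functions $k_i,l_i$ are then read off from how the groupoid isomorphism transports the open bisections implementing the shifts $\sigma_A,\sigma_B$ over each cylinder. The reliance on Matui's reconstruction is essential here: everything else uses the Markov shift structure directly, whereas at this final step continuity and the cocycle conditions must be manufactured from nothing but the abstract multiplication in $\Gamma_A$.
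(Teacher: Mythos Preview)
The paper does not prove this theorem at all: it is quoted as background from \cite[Corollary 1.2]{MaIsrael2015} and no proof is supplied in the present paper. So there is nothing here to compare your argument against line by line.

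That said, your sketch is a correct outline. A couple of remarks on how it relates to the literature the paper invokes. For (iii) $\Rightarrow$ (i) you appeal to Matui's spatial realization theorem for topological full groups of \'etale groupoids; this is indeed a valid route, and the paper itself acknowledges it parenthetically in Proposition~\ref{prop:implement} (``cf.\ \cite{MatuiCrelle}''). However, the primary reference the paper gives, \cite[Theorem 7.2]{MaIsrael2015}, proceeds differently: it builds the homeomorphism $h:X_A\to X_B$ directly from the group isomorphism by analyzing the local structure of elements of $\Gamma_A$ (how they act near fixed points and on small cylinders), without passing through the general groupoid reconstruction machinery. Your groupoid approach is cleaner and more conceptual once one has Matui's theorem available; the \cite{MaIsrael2015} approach is more hands-on and self-contained for the Markov-shift setting. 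Either way the conclusion is the same, and the present paper only needs the statement, not the proof.

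One minor correction: the spatial realization result you want is in \cite{MatuiCrelle} rather than \cite{MatuiPLMS}; the PLMS paper treats homology and finiteness properties, while the reconstruction of the groupoid from its topological full group is in the Crelle paper.
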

For a homeomorphism $h:X_A\longrightarrow X_B$ giving rise to
$(X_A, \sigma_A)\underset{\coe}{\sim}(X_B,\sigma_B)$,
the isomorphism $\xi_h: \Gamma_A \longrightarrow \Gamma_B$ in (iii)
is given by $\xi_h(\tau) = h\circ \tau \circ h^{-1}$ for $\tau \in \Gamma_A$. 

After completing the classification results \cite[Theorem 3.6]{MMKyoto}
of continuous orbit equivalence of one-sided irreducible topological Markov shifts,
the author introduced in \cite{MaJOT}, \cite{MaPAMS2017} 
 three more equivalence relations slightly stronger 
than continuous orbit equivalence.
The first one is called strongly continuous orbit equivalence,
the second one is called uniformly continuous orbit equivalence,
and the third one is eventual conjugacy. 
Two one-sided topological Markov shifts
$(X_A,\sigma_A)$ and $(X_B, \sigma_B)$ 
are said to be {\it strongly continuous orbit equivalent}
 written
$(X_A, \sigma_A)\underset{\scoe}{\sim}(X_B,\sigma_B)$
if they are continuously  orbit equivalent
and one may take its cocycle functions of the form
$c_1 = 1 - b_1 + b_1\circ \sigma_A$
and
$c_2 = 1 - b_2 + b_2\circ \sigma_B$
for some continuous functions
$b_1 \in C(X_A,\Z)$ and $b_2 \in C(X_B,\Z)$.
If in particular, one may take $b_1 \equiv 0$ and $b_2\equiv 0$,
$(X_A,\sigma_A)$ and $(X_B, \sigma_B)$ 
are said to be {\it eventually conjugate}.
The one-sided topological Markov shifts for the matrices 
$\begin{bmatrix}
1 & 1 \\
1 & 1 
\end{bmatrix}
$
and
$\begin{bmatrix}
1 & 1 \\
1 & 0 
\end{bmatrix}
$
are continuous orbit equivalent, but not strongly continuous orbit equivalent
(\cite[Section 3]{MaPAMS2017}).
As in  \cite[Section 3]{MaPAMS2017},
there are two irreducible non-permutation matrices such that their 
one-sided topological Markov shifts are strongly continuous orbit equivalent,
but not eventually conjugate. 
Let $\Gamma_A^{\AF}$ be the subgroup of $\Gamma_A$ consisting of 
$\tau \in \Gamma_A$ such that $k_\tau(x) = l_\tau(x), x \in X_A$
in \eqref{eq:tau}.
The subgroup $\Gamma_A^{\AF}$ is called the AF full group of $(X_A,\sigma_A)$.
Two one-sided topological Markov shifts
$(X_A,\sigma_A)$ and $(X_B, \sigma_B)$ 
are said to be {\it uniformly continuous orbit equivalent}
written
$(X_A, \sigma_A)\underset{\ucoe}{\sim}(X_B,\sigma_B)$
if there exists a homeomorphism that gives rise to
a continuous orbit equivalence between them and
 for any $\tau_1 \in \Gamma_A^{\AF},\tau_2 \in \Gamma_B^{\AF}$
there exist natural numbers $K_{\tau_1}, 
K_{\tau_2}$
such that  
\begin{align*}
\sigma_B^{K_{\tau_1}}(h(\tau_1(x))) & = \sigma_B^{K_{\tau_1}}(h(x)), \qquad x \in X_A,\\
\sigma_A^{K_{\tau_2}}(h^{-1}(\tau_2(w))) & = \sigma_A^{K_{\tau_2}}(h^{-1}(w)), \qquad w \in X_B.
\end{align*}
In \cite[Theorem 1.5]{MaPAMS2017},
it was proved that 
eventual conjugacy is equivalent to uniformly continuous orbit equivalence.
Consequently one knows the following implications:
\begin{align*}
& \text{eventually conjugate} \\ 
& \qquad \qquad \Vert\\
& \qquad \quad \underset{\ucoe}{\sim} \qquad
\Longrightarrow  \qquad
\underset{\scoe}{\sim} \qquad
\Longrightarrow  \qquad
\underset{\coe}{\sim} 
\end{align*}
The following characterization of uniformly continuous orbit equivalence
was shown.

\begin{theorem}[{\cite[Theorem 1.5]{MaPAMS2017}}] \label{thm:uniformlycoe}
The following four conditions are equivalent:
\begin{enumerate}
\renewcommand{\theenumi}{\roman{enumi}}
\renewcommand{\labelenumi}{\textup{(\theenumi)}}
\item $(X_A,\sigma_A)$ and $(X_B,\sigma_B)$ are uniformly  continuous orbit equivalent. 
\item There exists an isomorphism $\Phi:\OA\longrightarrow \OB$ 
 such that 
$\Phi(\DA) = \DB$ and
\begin{equation*}
\Phi \circ \rho^A_t =  \rho^B_t \circ \Phi, \qquad t \in \T.
\end{equation*}
\item
There exists an isomorphism $\xi: \Gamma_A \longrightarrow \Gamma_B$ 
of groups  such that
$\xi(\Gamma_A^{\AF}) =\Gamma_B^{\AF}.$ 
%\begin{equation*}
%\xi(\Gamma_A^{\AF}) =\Gamma_B^{\AF}.
% \qquad
%\xi^{-1}(\Gamma_B^{\AF}) =\Gamma_A^{\AF}.
%\end{equation*}  
\item
There exists a homeomorphism 
$h: X_A \longrightarrow X_B$ 
 such that 
%$\xi_h(\Gamma_A) =\Gamma_B$ and
%$\xi_h(\Gamma_A^{\AF}) =\Gamma_B^{\AF}.$
\begin{equation*}
\xi_h(\Gamma_A) =\Gamma_B \quad \text{ and } \quad
\xi_h(\Gamma_A^{\AF}) =\Gamma_B^{\AF}.
%, \qquad\xi_{h^{-1}}(\Gamma_B^{\AF}) =\Gamma_A^{\AF}. 
\end{equation*}  
%where $\xi_h(\tau) = h \circ \tau \circ h^{-1}$ for $\tau\in \Gamma_A$.
% and$\xi_{h^{-1}}(\varphi) = h^{-1} \circ \varphi \circ h$ for $\varphi \in \Gamma_B$.
\end{enumerate}
\end{theorem}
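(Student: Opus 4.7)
My plan is to prove the four-way equivalence by a circular chain (i) $\Rightarrow$ (ii) $\Rightarrow$ (iii) $\Rightarrow$ (iv) $\Rightarrow$ (i), exploiting the stated equivalence between uniformly continuous orbit equivalence and eventual conjugacy, together with the standard identification of $\Gamma_A$ with the group of unitary normalizers of $\DA$ inside $\OA$ modulo $\DA$-unitaries (from \cite{MaPacific, MMGGD}). Under this identification, $\Gamma_A^{\AF}$ corresponds to precisely those normalizers which happen to lie in the gauge-fixed AF subalgebra $\FA\subset\OA$, because the AF condition $k_\tau=l_\tau$ is exactly $d_\tau\equiv 0$, i.e.\ gauge invariance of the implementing unitary $u_\tau$.

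For (i) $\Rightarrow$ (ii), I would replace (i) by eventual conjugacy, i.e.\ take a homeomorphism $h:X_A\to X_B$ with cocycles $c_1\equiv 1$ and $c_2\equiv 1$. The construction from \cite{MMKyoto, MaIsrael2015} then produces an isomorphism $\Phi:\OA\to\OB$ with $\Phi(\DA)=\DB$ for which each $\Phi(S_j^A)$ is homogeneous of gauge weight $c_1\equiv 1$ in $\OB$; this is exactly the intertwining relation $\Phi\circ\rho^A_t=\rho^B_t\circ\Phi$. For (ii) $\Rightarrow$ (iii), the isomorphism $\Phi$ transports $\DA$-normalizers to $\DB$-normalizers and so induces a group isomorphism $\xi:\Gamma_A\to\Gamma_B$; because $\Phi$ is gauge-equivariant it sends $\FA$ onto $\FB$, hence maps $\Gamma_A^{\AF}$ onto $\Gamma_B^{\AF}$.

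For (iii) $\Rightarrow$ (iv), I must reconstruct a homeomorphism $h$ from the abstract group isomorphism $\xi$. This uses a Matui-type reconstruction theorem (compare \cite{MatuiCrelle}): the space $X_A$ is recovered intrinsically inside $\Gamma_A$ as the spectrum of a canonical family of compressions/involutions supported on clopen sets, and the induced action of $\Gamma_A$ on this spectrum agrees with the original action on $X_A$. Any $h$ implementing this reconstruction then satisfies $\xi_h=\xi$, so the AF-preservation is inherited directly. For (iv) $\Rightarrow$ (i), given such an $h$, fix $\tau_1\in\Gamma_A^{\AF}$; then $h\tau_1 h^{-1}\in\Gamma_B^{\AF}$ means there exists $K_{\tau_1}\in\N$ with $\sigma_B^{K_{\tau_1}}(h\tau_1 h^{-1}(w))=\sigma_B^{K_{\tau_1}}(w)$ for all $w\in X_B$; specialising to $w=h(x)$ gives the required identity, and the symmetric statement for $\tau_2\in\Gamma_B^{\AF}$ follows from applying $h^{-1}$.

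The principal obstacle is step (iii) $\Rightarrow$ (iv): upgrading a purely algebraic isomorphism of countable discrete groups to a topological conjugacy of shift systems. This requires a nontrivial reconstruction theorem — either through the Cartan triple $(\OA,\DA,\rho^A)$ in Renault groupoid style, or directly through the internal combinatorial structure of $\Gamma_A$ following Matui — and one must verify that the reconstruction intertwines the AF subgroup filtration. Once this step is in hand, the other implications reduce to the gauge/AF correspondence inside the Cuntz--Krieger picture plus straightforward bookkeeping.
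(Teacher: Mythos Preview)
The paper does not supply its own proof of this theorem: it is quoted verbatim from \cite[Theorem~1.5]{MaPAMS2017} as background for the new results, so there is no in-paper argument against which to compare your proposal.

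That said, your outline is broadly correct and close in spirit to how the result is established in \cite{MaPAMS2017}. A few cautions are in order. First, in (i) $\Rightarrow$ (ii) you begin by replacing uniformly continuous orbit equivalence with eventual conjugacy; since that equivalence is itself part of the content of \cite[Theorem~1.5]{MaPAMS2017}, you should make sure it is proved there independently of conditions (ii)--(iv), or else supply a direct argument (one can, for instance, deduce eventual conjugacy from (iv) rather than from (i)). Second, your ``principal obstacle'' (iii) $\Rightarrow$ (iv) is precisely the spatial realization theorem that this paper records as Proposition~\ref{prop:implement}, citing \cite[Theorem~7.2]{MaIsrael2015} (cf.\ \cite{MatuiCrelle}); so you may simply invoke that result rather than reprove it. Third, in (iv) $\Rightarrow$ (i) you correctly extract the uniform AF condition from $\xi_h(\Gamma_A^{\AF})=\Gamma_B^{\AF}$, but you also need that $h$ itself yields a continuous orbit equivalence in the sense of \eqref{eq:orbiteq1x}--\eqref{eq:orbiteq2y}; this follows from $\xi_h(\Gamma_A)=\Gamma_B$ by the argument of \cite[Proposition~5.3]{MaPacific} (as noted after Proposition~\ref{prop:implement}), and should be made explicit.
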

This theorem, in particular, the equivalence between (i) and (iii)
shows that the uniformly continuous orbit equivalence class of $(X_A,\sigma_A)$
is completely  determined by the pair
$(\Gamma_A, \Gamma_A^{\AF})$
of the group $\Gamma_A$ and its subgroup $\Gamma_A^{\AF}$.
K. A. Brix and T. M. Carlsen in \cite{BC} recently studied one-sided topological conjugacy
of one-sided topological Markov shifts from the view points of groupoids, 
and showed that 
 uniformly continuous orbit equivalence is strictly weaker than one-sided topological conjugacy.

In the first half of the paper, 
we will first study the first cohomology group
$H^1(\Gamma_A,X_A;\Z)$ for the action of the full group $\Gamma_A$ on $X_A$,
and introduce a cocycle subgroup $\Gamma_{A,f}$ of $\Gamma_A$ for $f \in C(X_A,\Z)$
and a coboundary subgroup 
$\Gamma_A^b$ for $b \in C(X_A,\Z)$ in the following way.
Let $f \in C(X_A,\Z)$. 
For $(x, \tau) \in X_A\times \Gamma_A$, 
we set
\begin{equation} \label{eq:rhof2}
\rho^f(x,\tau)  
= \sum_{i=0}^{l_\tau(x)}f(\sigma_A^i(x)) -\sum_{j=0}^{k_\tau(x)}f(\sigma_A^j(\tau(x))).
\end{equation}
it is easy to see that $\rho^f(x,\tau)$ does not depend on the choice of $\l_\tau$ and $k_\tau$ 
as long as they satisfy \eqref{eq:tau}.
\begin{definition} Let $f, b $ be integer valued continuous functions on $X_A$.
\begin{enumerate}
\renewcommand{\theenumi}{\roman{enumi}}
\renewcommand{\labelenumi}{\textup{(\theenumi)}}
\item
The cocycle subgroup $\Gamma_{A,f}$ of $\Gamma_A$ is defined by
\begin{equation}\label{eq:cocyclesub}  
\Gamma_{A,f} : = \{ \tau \in \Gamma_A 
\mid \rho^f(x,\tau) =0 \text{ for all } x \in X_A \}.
\end{equation}
\item
The coboundary subgroup $\Gamma_A^b$ of $\Gamma_A$ is defined by
\begin{equation}\label{eq:coboundarysub}
\Gamma_A^b : = \{ \tau \in \Gamma_A 
\mid d_\tau(x ) = b(x) - b(\tau(x)) \text{ for all } x \in X_A \},
\end{equation} 
where $d_\tau = l_\tau - k_\tau$.
\end{enumerate}
\end{definition}
It will be shown that they are actually subgroups of $\Gamma_A$
(Proposition \ref{prop:cocyclesubgroup}, Lemma \ref{lem:1b}).
For the constant functions $f\equiv 1$ and $b\equiv 0$,
the subgroups satisfy 
$\Gamma_{A,1} = \Gamma_A^0 = \Gamma_A^\AF$.

In the second half of the paper, 
we will establish  the following three theorems.
The first  theorem below describes behavior of cocycle subgroups and cohomology groups
as well as gauge actions under  continuous orbit equivalence.
%%%%%%%%%%%%%%%%%%%%%%%%%%%%%%%%%%%%%%%%%%%%%%%

\begin{theorem}\label{thm:main1}
Let $A$ and $B$ be irreducible, non permutation matrices with entries in $\{0,1\}$. 
Suppose that $(X_A,\sigma_A)$ and $(X_B,\sigma_B)$ are continuously orbit equivalent
via homeomorphism $h: X_A\longrightarrow X_B$.
Let $\varPsi_h: C(X_B,\Z)\longrightarrow C(X_A,\Z)$
be the homomorphism defined by \eqref{eq:Psihf}.
Then the following assertions hold.
\begin{enumerate}
\renewcommand{\theenumi}{\roman{enumi}}
\renewcommand{\labelenumi}{\textup{(\theenumi)}}
\item There exists an isomorphism $\Phi:\OA\longrightarrow \OB$ 
such that 
$\Phi(\DA) = \DB$ and
\begin{equation*}
\Phi \circ \rho^{A, \varPsi_h(f)}_t = \rho^{B,f}_t \circ \Phi, \qquad f \in C(X_B,\Z), \, \, t \in \T.
\end{equation*}
\item
There exists an isomorphism $\xi: \Gamma_A \longrightarrow \Gamma_B$ 
of groups  such that 
\begin{equation*}
\xi(\Gamma_{A, \varPsi_h(f)}) =\Gamma_{B,f}, \qquad f \in C(X_B,\Z).
\end{equation*}  
%\itemThere exists a homeomorphism $h: X_A \longrightarrow X_B$ 
% such that $\xi_h(\Gamma_A) =\Gamma_B$ and
%\begin{equation*}
%\xi_h(\Gamma_{A, \varPsi_h(f)}) =\Gamma_{B,f}, \qquad f \in C(X_B,\Z). 
%\end{equation*}  
%where $\xi_h(\tau) = h \circ \tau \circ h^{-1}$ for $\tau\in \Gamma_A$ and
%$\xi_{h^{-1}}(\varphi) = h^{-1} \circ \varphi \circ h$ for $\varphi \in \Gamma_B$.
\item
There exists  an isomorphism
$
\varPhi_h: H^1(\Gamma_A, X_A;\Z) \longrightarrow H^1(\Gamma_B, X_B;\Z) 
$
of the cohomology groups such that
\begin{equation*}
\varPhi_h([\rho^{\varPsi_h(f)}]) = [\rho^f], \qquad f \in C(X_B, \Z)
\end{equation*}
where 
$\rho^f(y,\varphi) =\sum_{i=0}^{l_{\varphi}(y)}f(\sigma_B^i(y)) -
\sum_{j=0}^{k_{\varphi}(y)}f(\sigma_B^j(\varphi(y)))
$
for $(y,\varphi) \in X_B\times\Gamma_B$.
\end{enumerate}
\end{theorem}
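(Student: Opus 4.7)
The plan is to build all three assertions on a single cocycle identity: for every $(x,\tau)\in X_A\times\Gamma_A$ and every $f\in C(X_B,\Z)$,
\begin{equation*}
\rho^{\varPsi_h(f)}(x,\tau) \;=\; \rho^f(h(x),\xi_h(\tau)),
\end{equation*}
where $\xi_h(\tau)=h\circ\tau\circ h^{-1}$. Once this identity is in hand, (ii) and (iii) fall out essentially by inspection, and (i) amounts to verifying that the isomorphism $\Phi\colon\OA\to\OB$ already produced by the characterisation of continuous orbit equivalence from \cite{MaIsrael2015} intertwines the generalised gauge actions for the pair $(\varPsi_h(f),f)$.

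To prove the cocycle identity I would start from $\sigma_A^{l_\tau(x)}(x)=\sigma_A^{k_\tau(x)}(\tau(x))$ and iterate \eqref{eq:orbiteq1x} $l_\tau(x)$ times along the $\sigma_A$-orbit of $x$ and $k_\tau(x)$ times along that of $\tau(x)$. This produces nonnegative integers $L(x),K(x)$, each a cumulative sum of $l_1$ and $k_1$ along the two orbits, with $\sigma_B^{L(x)}(h(x))=\sigma_B^{K(x)}(h(\tau(x)))$; this realises $\xi_h(\tau)$ as an element of $\Gamma_B$ and simultaneously provides explicit choices of $l_{\xi_h(\tau)}(h(x))$ and $k_{\xi_h(\tau)}(h(x))$. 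Substituting these into the definition \eqref{eq:rhof2} for $\rho^f(h(x),\xi_h(\tau))$ and telescoping reproduces precisely the double sum obtained by expanding $\sum_i\varPsi_h(f)(\sigma_A^i(x))-\sum_j\varPsi_h(f)(\sigma_A^j(\tau(x)))$ via \eqref{eq:Psihf}, which is the definition of $\rho^{\varPsi_h(f)}(x,\tau)$.

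Part (ii) then follows by taking $\xi=\xi_h$: the condition $\tau\in\Gamma_{A,\varPsi_h(f)}$ reads $\rho^{\varPsi_h(f)}(\cdot,\tau)\equiv 0$, which by the identity is equivalent to $\rho^f(h(\cdot),\xi_h(\tau))\equiv 0$, i.e.\ $\xi_h(\tau)\in\Gamma_{B,f}$. For (iii) the same identity shows that $[\rho^{\varPsi_h(f)}]\mapsto[\rho^f]$ is a well defined map on cohomology classes; applying the construction to $h^{-1}$ and invoking the fact, already used in \cite{MMETDS}, that $\varPsi_h$ and $\varPsi_{h^{-1}}$ are mutually inverse on the ordered cohomology $(H^A,H^A_+)$, yields an inverse and hence the desired isomorphism $\varPhi_h$.

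For (i) I would take the same isomorphism $\Phi\colon \OA\to\OB$ already provided by \cite{MaIsrael2015}. The description of $\varPsi_h(f)$ as a signed sum of $f$ along the $h$-image of an orbit interval is precisely the pullback, through the underlying groupoid isomorphism implementing $\Phi$, of the $\Z$-valued cocycle on the groupoid of $(X_B,\sigma_B)$ whose associated $\T$-action is $\rho^{B,f}$. Concretely, I would verify the intertwining on the canonical generators $S_j^B$ of $\OB$ by writing $\Phi^{-1}(S_j^B)$ as a finite sum of words $S_\mu S_\nu^*$ in $\OA$ with coefficients in $\DA$, computing the weights of each term under $\rho^{A,\varPsi_h(f)}_t$ via \eqref{eq:Psihf}, and using the same telescoping that establishes the cocycle identity to match these weights with the prescribed $e^{2\pi\sqrt{-1}tf}$. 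The main obstacle is this bookkeeping in (i); the rest is a direct consequence of the cocycle identity, which is itself a $\Gamma_A$-equivariant refinement of the cohomology calculation in \cite{MMETDS}.
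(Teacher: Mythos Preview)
Your plan is essentially the paper's own route: the central engine is exactly the cocycle identity
\[
\rho^{\varPsi_h(f)}(x,\tau)=\rho^f(h(x),\xi_h(\tau)),
\]
which the paper proves as Lemma~\ref{lem:coecoh} using the explicit formulas $l_{\xi_h(\tau)}(h(x))=k_1^m(\tau(x))+l_1^n(x)$, $k_{\xi_h(\tau)}(h(x))=l_1^m(\tau(x))+k_1^n(x)$ (Lemma~\ref{lem:htauh}), precisely your ``iterate \eqref{eq:orbiteq1x} along the two orbit segments and telescope.'' Part~(ii) is then derived exactly as you say (Proposition~\ref{prop:xihAB}), and for~(i) the paper simply cites \cite[Theorem~3.2]{MaMZ}, whose proof is along the lines you sketch.

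There is one genuine gap in your treatment of~(iii). The theorem asserts an isomorphism of the \emph{full} cohomology groups $H^1(\Gamma_A,X_A;\Z)\to H^1(\Gamma_B,X_B;\Z)$, but your plan only defines the map on classes of the special form $[\rho^{\varPsi_h(f)}]$ and inverts it via the mutual invertibility of $\varPsi_h$ and $\varPsi_{h^{-1}}$ on $H^A$. The paper never claims that every class in $H^1(\Gamma_A,X_A;\Z)$ arises as some $[\rho^g]$ (it only proves $\rho_H$ is injective), so this does not yet yield an isomorphism of the whole group. The fix, however, is already visible on the right-hand side of your identity: the assignment $\rho\mapsto\bigl((y,\varphi)\mapsto\rho(h^{-1}(y),\xi_{h^{-1}}(\varphi))\bigr)$ makes sense for an \emph{arbitrary} one-cocycle $\rho$, not just $\rho^f$. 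The paper (Proposition~\ref{prop:COECOHOM}) takes this as the definition of $\varPhi_h$, checks in two lines that it carries $Z^1$ to $Z^1$ and $B^1$ to $B^1$, and observes that $\varPhi_{h^{-1}}$ is its inverse; your cocycle identity is then exactly the verification that this general $\varPhi_h$ has the required effect on the classes $[\rho^{\varPsi_h(f)}]$.
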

 The assertion  (i)  above has been already proved in 
\cite[Theorem 3.2]{MaMZ}.
Hence we just need to show the assertions  (ii) and (iii).

If we restrict our interest to strongly continuous orbit equivalence, 
we have the following second theorem, which is a characterization of
strongly continuous orbit equivalence in terms of  coboundary subgroups and cohomology groups. 
In the theorem, we further assume that the matrices are primitive,
that is, irreducible and aperiodic.
\begin{theorem}%[{Theorem \ref{thm:scoe}},{\cite[Therem 6.7]{MaJOT}} ]
\label{thm:main2}
Let $A$ and $B$ be primitive matrices with entries in $\{0,1\}$. 
Then the following five assertions are equivalent.
\begin{enumerate}
\renewcommand{\theenumi}{\roman{enumi}}
\renewcommand{\labelenumi}{\textup{(\theenumi)}}
\item $(X_A,\sigma_A)$ and $(X_B,\sigma_B)$ are strongly continuous orbit equivalent. 
\item There exist an isomorphism $\Phi:\OA\longrightarrow \OB$ 
and a unitary one-cocycle $v_t \in \OB, t \in \T$ for gauge action
$\rho^B$ on $\OB$ such that 
$\Phi(\DA) = \DB$ and
\begin{equation*}
\Phi \circ \rho^A_t = \Ad(v_t)\circ \rho^B_t \circ \Phi, \qquad t \in \T.
\end{equation*}
\item
There exist an isomorphism $\xi: \Gamma_A \longrightarrow \Gamma_B$ 
of groups and continuous functions
$b_1 \in C(X_A,\Z), b_2 \in C(X_B,\Z)$ such that 
\begin{equation*}
\xi(\Gamma_A^{\AF}) =\Gamma_B^{b_2}, \qquad
\xi^{-1}(\Gamma_B^{\AF}) =\Gamma_A^{b_1}.
\end{equation*}  
\item
There exist a homeomorphism 
$h: X_A \longrightarrow X_B$ 
and continuous functions
$b_1 \in C(X_A,\Z), b_2 \in C(X_B,\Z)$ such that 
$\xi_h(\Gamma_A) =\Gamma_B$ and
\begin{gather*}
\xi_h(\Gamma_A^f) =\Gamma_B^{f\circ h^{-1} + b_2}, \qquad f \in C(X_A,\Z), \\
\xi_{h^{-1}}(\Gamma_B^g) =\Gamma_A^{g\circ h + b_1}, \qquad g \in C(X_B,\Z). 
\end{gather*}  
%where $\xi_h(\tau) = h \circ \tau \circ h^{-1}$ for $\tau\in \Gamma_A$ and
%$\xi_{h^{-1}}(\varphi) = h^{-1} \circ \varphi \circ h$ for $\varphi \in \Gamma_B$.
\item
There exists a homeomorphism 
$h: X_A \longrightarrow X_B$ 
such that 
$\xi_h(\Gamma_A) =\Gamma_B$ and
the induced isomorphism
$$
\varPhi_h: H^1(\Gamma_A, X_A;\Z) \longrightarrow H^1(\Gamma_B, X_B;\Z) 
$$
of the cohomology groups satisfies
$\varPhi_h([d^A]) = [d^B],$
where
$d^A(x,\tau) = l_{\tau}(x) -k_{\tau}(x)$ for $(x,\tau) \in X_A\times \Gamma_A$,
$d^B(y,\varphi) = l_{\varphi}(y) -k_{\varphi}(y)$ for
 $(y,\varphi) \in X_B\times \Gamma_B$.
\end{enumerate}
\end{theorem}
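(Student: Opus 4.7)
The plan is to establish the equivalences as a cycle $(\text{i}) \Rightarrow (\text{iv}) \Rightarrow (\text{iii}) \Rightarrow (\text{i})$, to pair $(\text{i}) \Leftrightarrow (\text{ii})$ using the $C^*$-algebraic content of Theorem \ref{thm:main1}(i), and to pair $(\text{i}) \Leftrightarrow (\text{v})$ via Theorem \ref{thm:main1}(iii). The common thread throughout is the identity $\varPsi_h(1_{X_B}) = c_1$, obtained by evaluating \eqref{eq:Psihf} at the constant function $f \equiv 1_{X_B}$.

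For $(\text{i}) \Rightarrow (\text{ii})$, start from cocycle functions $c_i = 1 - b_i + b_i \circ \sigma_i$ with $b_1 \in C(X_A,\Z)$, $b_2 \in C(X_B,\Z)$. Using the commutation $S_j g S_j^* = (g \circ \sigma_A) \cdot S_j S_j^*$ valid for $g \in \DA$, a direct calculation shows that $u_t := e^{-2\pi\sqrt{-1}\,t\,b_1} \in \DA$ realizes $\Ad(u_t) \circ \rho^A_t = \rho^{A, c_1}_t$. Theorem \ref{thm:main1}(i) at $f = 1_{X_B}$ then gives $\Phi \circ \rho^A_t = \Ad(\Phi(u_t^*)) \circ \rho^B_t \circ \Phi$, and $v_t := \Phi(u_t^*) \in \DB$ is automatically a one-cocycle because $\rho^B$ fixes $\DB$ pointwise. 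For the converse, the intertwining condition forces $v_t$ to normalize $\DB$, and combining the cocycle identity with the triviality of $\rho^B|_{\DB}$ places $v_t$ inside $\DB$; writing $v_t = e^{2\pi\sqrt{-1}\,t\,\beta}$ for some $\beta \in C(X_B,\Z)$ and running the computation backwards produces $c_1 = 1 - b_1 + b_1\circ\sigma_A$ with $b_1 := \beta\circ h$, and $c_2$ is handled symmetrically.

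For the chain $(\text{i}) \Rightarrow (\text{iv}) \Rightarrow (\text{iii}) \Rightarrow (\text{i})$, iterating \eqref{eq:orbiteq1x} along $\sigma_A$-orbits produces the fundamental formula
\[
d_{\xi_h(\tau)}(h(x)) \;=\; \sum_{i=0}^{l_\tau(x) - 1} c_1(\sigma_A^i(x)) \;-\; \sum_{j=0}^{k_\tau(x) - 1} c_1(\sigma_A^j(\tau(x))), \qquad \tau \in \Gamma_A, \; x \in X_A.
\]
Under the strong-COE hypothesis $c_1 - 1 = b_1 \circ \sigma_A - b_1$, both sums telescope and the endpoint terms at $\sigma_A^{l_\tau(x)}(x) = \sigma_A^{k_\tau(x)}(\tau(x))$ (cf.\ \eqref{eq:tau}) cancel, leaving $d_{\xi_h(\tau)}(h(x)) = d_\tau(x) - b_1(x) + b_1(\tau(x))$. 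For $\tau \in \Gamma_A^f$ this is exactly the coboundary relation showing $\xi_h(\tau) \in \Gamma_B^{f\circ h^{-1} - b_1\circ h^{-1}}$, establishing (iv) with the assignment $b_2 := -b_1\circ h^{-1}$ (and symmetrically in the other direction). Setting $f \equiv 0$ in (iv) and invoking $\Gamma_A^0 = \Gamma_A^{\AF}$ gives (iii). For (iii) $\Rightarrow$ (i), the classification theorem from the introduction lifts the group isomorphism $\xi$ to a homeomorphism $h$ with $\xi = \xi_h$; the hypothesis $\xi_h(\Gamma_A^{\AF}) = \Gamma_B^{b_2}$ then asserts that $d_{\xi_h(\tau)}(y) = b_2(y) - b_2(\xi_h(\tau)(y))$ holds for every $\tau \in \Gamma_A^{\AF}$. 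The main technical obstacle of the proof is to reverse-engineer from this identity the strong-COE form $c_1 = 1 - b_1 + b_1\circ\sigma_A$; I would attack this by testing the identity against the canonical involutions in $\Gamma_A^{\AF}$ swapping pairs of admissible cylinders of common length $n$, combining the resulting pointwise constraints across all such $(\mu, \nu)$, and inducting on $n$.

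For $(\text{i}) \Leftrightarrow (\text{v})$, the key preliminary is the telescoping identity
\[
\rho^{g - g\circ\sigma_A}(x,\tau) \;=\; \bigl[g(x) - g(\sigma_A^{l_\tau(x)+1}(x))\bigr] - \bigl[g(\tau(x)) - g(\sigma_A^{k_\tau(x)+1}(\tau(x)))\bigr] \;=\; g(x) - g(\tau(x)),
\]
valid for any $g \in C(X_A,\Z)$ by \eqref{eq:tau}, which shows that $\sigma_A$-coboundaries in $C(X_A,\Z)$ descend to coboundaries in $H^1(\Gamma_A, X_A;\Z)$. Under (i), $c_1 - 1$ is a $\sigma_A$-coboundary, so $[\rho^{c_1}] = [\rho^1] = [d^A]$, and Theorem \ref{thm:main1}(iii) at $f = 1_{X_B}$ yields $\varPhi_h([d^A]) = \varPhi_h([\rho^{c_1}]) = [\rho^1] = [d^B]$, which is (v). For (v) $\Rightarrow$ (i), injectivity of $\varPhi_h$ together with Theorem \ref{thm:main1}(iii) forces $[\rho^{c_1-1}] = 0$ in $H^1(\Gamma_A, X_A;\Z)$, i.e., $\rho^{c_1 - 1}(x,\tau) = b(x) - b(\tau(x))$ for some $b \in C(X_A,\Z)$ and all $(x,\tau)$. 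The same cylinder-swap-and-induction argument sketched above upgrades this group-cohomological coboundary to the $\sigma_A$-cohomological identity $c_1 - 1 = b\circ\sigma_A - b$, completing the proof.
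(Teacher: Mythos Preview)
Your overall architecture matches the paper's: the cycle $(\text{i}) \Rightarrow (\text{iv}) \Rightarrow (\text{iii}) \Rightarrow (\text{i})$, the citation of prior work for $(\text{i}) \Leftrightarrow (\text{ii})$, and the cohomological pairing for $(\text{i}) \Leftrightarrow (\text{v})$ are all as in the text. Your fundamental formula for $d_{\xi_h(\tau)}$ and the telescoping identity $\rho^{g - g\circ\sigma_A} = \delta_g$ are exactly Lemma~\ref{lem:htauh} and Lemma~\ref{lem:2.8}(ii), and your derivation of $(\text{i}) \Rightarrow (\text{iv})$ is the content of Proposition~\ref{prop:3.2}.

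The genuine gap is in $(\text{iii}) \Rightarrow (\text{i})$. Your cylinder-swap idea, if carried out, would at best show that $g_1 := c_1 - (b_1 - b_1\circ\sigma_A)$ is \emph{constant}: testing against AF-swaps with $l_\tau = k_\tau = n$ yields $g_1^n(x) = g_1^n(\tau(x))$ whenever $\sigma_A^n(x) = \sigma_A^n(\tau(x))$, and one can push this to constancy. (The paper does this in Lemma~\ref{lem:3.5} by a different route, a minimality-of-$K_0$ contradiction.) What you never address is why that constant equals $1$. This is not automatic: the hypothesis in (iii) only speaks about $\Gamma_A^{\AF}$, on which $\rho^{1}$ vanishes identically, so nothing in your test distinguishes $c_1 \sim 1$ from $c_1 \sim N$ in $H^A$ for any integer $N$. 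The paper's Proposition~\ref{prop:3.4} closes this by a two-sided argument: writing $c_1 = N_1 + b_1 - b_1\circ\sigma_A$ and $c_2 = N_2 + b_2 - b_2\circ\sigma_B$, feeding these into $\varPsi_{h^{-1}}(c_1) = 1_B$ produces $N_1 N_2 = 1$, and then the positivity result $[c_1] = [\varPsi_h(1_B)] \in H^A_+$ from \cite{MMETDS} forces $N_1 > 0$, hence $N_1 = N_2 = 1$. Your sketch contains no analogue of this step and cannot succeed without it.

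For $(\text{v}) \Rightarrow (\text{i})$ your instinct is sound but you undersell it by deferring to the same incomplete sketch. Here the situation is actually easier: once you have $[\rho^{c_1 - 1}] = 0$ in $H^1(\Gamma_A, X_A;\Z)$ (as a cocycle on \emph{all} of $\Gamma_A$, not just $\Gamma_A^{\AF}$), the injectivity of $\rho_H : H^A \to H^1(\Gamma_A, X_A;\Z)$ proved in Section~2 gives $[c_1 - 1] = 0$ in $H^A$ directly, with no separate ``constant $=1$'' argument needed. The paper's Proposition~\ref{prop:scoecoho} makes this explicit by evaluating at the special elements $\tau_\mu$ with $l_{\tau_\mu} = 1$, $k_{\tau_\mu} = 0$; that single evaluation already pins down the constant.
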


 The equivalence between the first two assertions has been already proved in \cite[Theorem 6.7]{MaJOT}.
%For the constant function $b\equiv 0$,
%the coboundary subgroup $\Gamma_A^0$ coincides with the AF full group $\Gamma_A^{\AF}$.
The conditions  
$\xi(\Gamma_A^{\AF}) =\Gamma_B^{b_2},
\xi^{-1}(\Gamma_B^{\AF}) =\Gamma_A^{b_1}
$
for $b_1\equiv 0, b_2\equiv 0$ in the above assertion (iii) 
exactly reduce to the condition
$\xi(\Gamma_A^\AF)=\Gamma_B^\AF$. 
The unitary one-cocycle $v_t$ appeared in the above assertion (ii)
is the unitary defined by $e^{2\pi\sqrt{-1}b_2 t}$ for the continuous function $b_2 \in C(X_B,\Z)$ 
in (iii) as in the proof of \cite[Proposition 6.5]{MaJOT}.
Hence the above first four conditions (i), (ii) , (iii) and (iv)
are counterparts of Theorem \ref{thm:uniformlycoe}
that states equivalent conditions of uniformly continuous orbit equivalence.

Let $h:X_A\longrightarrow X_B$ 
be a homeomorphism that gives rise to a continuous orbit equivalence
between $(X_A,\sigma_A)$ and $(X_B,\sigma_B)$ as in 
\eqref{eq:orbiteq1x} and \eqref{eq:orbiteq2y}.
Recall that the cocycle functions are defined by 
$ c_1 = l_1 - k_1$ and $c_2 = l_2 - k_2$.
Recall also that one-sided topological Markov shifts 
$(X_A,\sigma_A)$ and $(X_B,\sigma_B)$ are strongly continuous orbit equivalent
if there exists $b_1\in C(X_A, \Z)$ such that
$c_1 = 1 + b_1 - b_1\circ\sigma_A$.
If we may take the above function $b_1$ as 
$b_1 = -d_\tau$ where $d_\tau = l_\tau - k_\tau$ 
for some $\tau \in \Gamma_A$, 
then $(X_A,\sigma_A)$ and $(X_B,\sigma_B)$ are said to be 
$\Gamma$-{\it strongly continuous orbit equivalent}.
We then finally prove that 
 the one-sided topological Markov shifts 
$(X_A, \sigma_A)$ and $(X_B, \sigma_B)$ are 
$\Gamma$-strongly continuous orbit equivalent
if and only if they are eventually conjugate (Theorem \ref{thm:main6.6}).
Since eventual conjugacy is equivalent to uniformly continuous orbit equivalence
(\cite[Theorem 1.5]{MaPAMS2017}),
we obtain the following result as the third theorem in the present paper.
\begin{theorem}[{Corollary \ref{cor:main6.7}}]
Let $A, B$ be irreducible, non-permutation matrices with entries in 
$\{0,1\}$.
Then the following three conditions are equivalent:
\begin{enumerate}
\renewcommand{\theenumi}{\roman{enumi}}
\renewcommand{\labelenumi}{\textup{(\theenumi)}}
\item $(X_A,\sigma_A)$ and $(X_B,\sigma_B)$ are uniformly continuous orbit equivalent. 
\item $(X_A,\sigma_A)$ and $(X_B,\sigma_B)$ are eventually conjugate. 
\item $(X_A, \sigma_A)$ and $(X_B, \sigma_B)$ are $\Gamma$-strongly continuous orbit equivalent.
\end{enumerate}
\end{theorem}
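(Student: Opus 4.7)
My plan is to split the three-way equivalence into two separate pieces. The equivalence (i) $\Leftrightarrow$ (ii) is already at hand as \cite[Theorem 1.5]{MaPAMS2017}, cited in the excerpt as part of Theorem \ref{thm:uniformlycoe}. Thus the only new content is the equivalence (ii) $\Leftrightarrow$ (iii), which the paper isolates as Theorem \ref{thm:main6.6}. Once that theorem is in hand, the corollary follows by quoting the two equivalences in sequence.

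For Theorem \ref{thm:main6.6}, the direction (ii) $\Rightarrow$ (iii) is immediate: eventual conjugacy means we may take $b_1 \equiv 0$ in the definition of SCOE, and since $\mathrm{id}_{X_A} \in \Gamma_A$ satisfies $d_{\mathrm{id}_{X_A}} \equiv 0$, the choice $b_1 = -d_{\mathrm{id}_{X_A}}$ exhibits $\Gamma$-strong continuous orbit equivalence with $\tau = \mathrm{id}_{X_A}$.

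The substantial direction (iii) $\Rightarrow$ (ii) I would attack via the C*-algebraic characterization in Theorem \ref{thm:main2}(ii). Starting from an isomorphism $\Phi : \OA \longrightarrow \OB$ with $\Phi(\DA) = \DB$ and $\Phi \circ \rho^A_t = \Ad(v_t) \circ \rho^B_t \circ \Phi$, where $v_t = e^{2\pi\sqrt{-1} t b_2}$ and $b_2 = -d_\varphi$ for some $\varphi \in \Gamma_B$, the key calculation is that the unitary $U_\varphi \in \OB$ implementing $\varphi$ (a finite sum $\sum_i S_{\mu_i}S_{\nu_i}^*$ with $|\nu_i|-|\mu_i| = d_\varphi$ on $[\nu_i]$) satisfies
\[
\rho^B_t(U_\varphi) \; = \; U_\varphi \cdot e^{-2\pi\sqrt{-1} t d_\varphi} \; = \; U_\varphi v_t,
\]
so that $v_t = U_\varphi^* \rho^B_t(U_\varphi)$ is a coboundary for the gauge action. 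Setting $\widetilde{\Phi} := \Ad(U_\varphi) \circ \Phi$, one checks directly that $\widetilde{\Phi}(\DA) = \DB$ (since $\Ad(U_\varphi)$ preserves $\DB$, acting there as the homeomorphism $\varphi^{-1}$ of $X_B$) and, using the coboundary identity, $\widetilde{\Phi} \circ \rho^A_t = \rho^B_t \circ \widetilde{\Phi}$. Theorem \ref{thm:uniformlycoe}(ii) then yields uniformly continuous orbit equivalence, which coincides with eventual conjugacy.

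The main obstacle I anticipate is that the stated definition of $\Gamma$-SCOE only explicitly requires $b_1 = -d_\tau$ for some $\tau \in \Gamma_A$, whereas the calculation above uses the parallel condition $b_2 = -d_\varphi$ for some $\varphi \in \Gamma_B$. To bridge this asymmetry, I would either derive the parallel condition on $b_2$ from the orbit-equivalence bookkeeping by matching $b_1$ and $b_2$ under $h$ and $h^{-1}$, or run the dual coboundary argument on the $\OA$ side using the unitary $U_\tau \in \OA$ and the modification $\Phi \circ \Ad(U_\tau^*)$; either route shows that the single cohomological fact $d_\varphi \in C(X_B,\Z)$ (or $d_\tau \in C(X_A,\Z)$) provides the exact unitary needed to kill the obstruction $v_t$.
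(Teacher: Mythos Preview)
Your reduction matches the paper's exactly: (i)$\Leftrightarrow$(ii) is quoted from \cite{MaPAMS2017}, the easy direction (ii)$\Rightarrow$(iii) via $\tau=\id$ is the same, and the content is (iii)$\Rightarrow$(ii).

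For that direction, however, the paper takes a different and more elementary route than yours. It stays entirely on the dynamical side: Lemma~6.2 first establishes your ``option (a)'', showing that $c_1 = 1_A - d_{\tau} + d_{\tau}\circ\sigma_A$ forces $c_2 = 1_B - d_{\tau_2} + d_{\tau_2}\circ\sigma_B$ with $\tau_2 = \xi_h(\tau^{-1})\in\Gamma_B$, via the key identity $d_{\tau_2}\circ h = -d_{\tau}$. Then Proposition~6.5 simply replaces $h$ by $h' := \tau_2\circ h$ and checks by hand, using that identity and the defining relation $\sigma_B^{k_{\tau_2}}\circ\tau_2 = \sigma_B^{l_{\tau_2}}$, that $\sigma_B^{K}(h'(\sigma_A(x))) = \sigma_B^{K+1}(h'(x))$ for all $x$ and large $K$. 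No $C^*$-algebras enter. Your argument is the operator-algebraic mirror of the same move: composing with $\tau_2$ corresponds precisely to twisting $\Phi$ by $\Ad(U_{\tau_2})$, which trivializes the gauge cocycle $v_t = e^{-2\pi\sqrt{-1}t\,d_{\tau_2}}$ and lands you in Theorem~\ref{thm:uniformlycoe}(ii). Your route gives a clean conceptual explanation (the SCOE cocycle is a genuine gauge coboundary exactly when $b_2$ is a $d_\varphi$), while the paper's is self-contained and avoids invoking Theorem~\ref{thm:main2}, whose statement carries a primitivity hypothesis not present here; if you pursue your approach you should make explicit that the equivalence (i)$\Leftrightarrow$(ii) of Theorem~\ref{thm:main2} is quoted from \cite{MaJOT} and does not require primitivity.
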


\medskip

The paper is organized in the following way.
In Section 2, we study the first cohomology group $H^1(\Gamma_A, X_A;\Z)$.
In Section 3, it is shown that continuous orbit equivalence preserves 
the first cohomology group $H^1(\Gamma_A, X_A;\Z)$.
In Section 4, the proof of Theorem \ref{thm:main1} is given.
In Section 5, a relationship between strongly continuous orbit equivalence and coboundary subgroups
is studied, and the proof of Theorem \ref{thm:main2} is completed.
In Section 6, it is proved that $\Gamma$-strongly continuous orbit equivalence is equivalent to eventual conjugacy.  
%%%%%%%%%%%%%%%%%%%%%%%%%%%%%

\medskip 

Let us provide some notation.
Denote by $B_k(X_A)$ the set of admissible words of $X_A$ with length $k$.
For $\mu = (\mu_1,\dots, \mu_k) \in B_k(X_A)$,
let us denote by $U_\mu$ the cylinder set 
$
U_\mu =\{ (x_i)_{i\in \N} \in X_A \mid x_1 = \mu_1,\dots, x_k = \mu_k \}.
$ 
%Let us denote by $C(X_A, \Z)$ 
%the abelian group of $\Z$-valued continuous functions on $X_A$.  
For $f \in C(X_A, \Z)$ and $m \in \N$, we set 
$
f^m(x) =\sum_{i=0}^{m-1}f(\sigma_A^i(x)), \, x \in X_A.
$
The formula
\begin{equation}\label{eq:fmn}
f^{n+m}(x) = f^m(x) + f^n(\sigma_A^m(x)), \qquad n,  m \in \Zp,\,\, x \in X_A 
\end{equation}
is straightforward to verify and useful in our further discussions.

%\newpage

%%%%%%%%%%%%%%%%%%%%%%%%%%%%%%%%%%%%%%%%%%%%%%%%%%%%%
%%%%%%%%%%%%%%%%%%%%%%%%%%%%%%%%%
%%%%%%%%%%%%%%%%%%%%%%%%%%%%%%%%%%%%%%%%%%%%%%%%%%%%%%%%%
\section{The first cohomology group}
%%%%%%%%%%%%%%%%%%%%%%%%%%%%%%%%%%%%%%%%%%%%%%%%%%%%%%%%%%%%
%%%%%%%%%%%%%%%%%%%%%%%%%%%%%%%%%%%%%%%%%%%%%%%%%%%%%
%%%%%%%%%%%%%%%%%%%%%%%%%%%%%%%%%
Recall that a homeomorphism $\tau$ on $X_A$ belongs to
the continuous full group $\Gamma_A$ of $(X_A,\sigma_A)$ 
if and only if  
there exist continuous functions
$k_\tau, l_\tau:X_A \longrightarrow \Zp$ 
satisfying \eqref{eq:tau}. 
The function $d_\tau: X_A \longrightarrow \Z$ defined by
$d_\tau(x) = l_\tau(x) - k_\tau(x), \, x \in X_A$ 
does not depend on the choice of 
$k_\tau, l_\tau$ satisfying \eqref{eq:tau} (\cite[Lemma 7.6]{MMGGD}). 
The following lemma is straightforward to verify
 and useful in our further discussion.
\begin{lemma}[{\cite[Lemma 7.7]{MMGGD}}] \label{lem:2.1}
For $\tau_1,\tau_2 \in \Gamma_A$, the  formulas   
\begin{equation*}
l_{\tau_2\circ \tau_1} = l_{\tau_1}  + l_{\tau_2} \circ \tau_1, \quad
k_{\tau_2\circ \tau_1} =k_{\tau_1} + k_{\tau_2} \circ \tau_1 \quad 
\text{ and hence } \quad
d_{\tau_2\circ \tau_1} =d_{\tau_1} + d_{\tau_2} \circ \tau_1
\end{equation*}
hold.
\end{lemma}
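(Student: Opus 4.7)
The plan is a direct unwinding of the defining identity \eqref{eq:tau} applied to each of $\tau_1, \tau_2$ in turn. Fix chosen witnesses $(k_{\tau_1}, l_{\tau_1})$ for $\tau_1$ and $(k_{\tau_2}, l_{\tau_2})$ for $\tau_2$, and introduce candidate witnesses for the composition
\[
K(x) := k_{\tau_1}(x) + k_{\tau_2}(\tau_1(x)), \qquad L(x) := l_{\tau_1}(x) + l_{\tau_2}(\tau_1(x)).
\]
Both $K$ and $L$ are continuous and $\Zp$-valued, since $\tau_1$ and the four witnesses $k_{\tau_i}, l_{\tau_i}$ all are.

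Next I would check that $(K, L)$ is a valid pair of witnesses for $\tau_2 \circ \tau_1$ in the sense of \eqref{eq:tau}. Applying \eqref{eq:tau} to $\tau_2$ at the point $\tau_1(x)$ gives $\sigma_A^{k_{\tau_2}(\tau_1(x))}(\tau_2(\tau_1(x))) = \sigma_A^{l_{\tau_2}(\tau_1(x))}(\tau_1(x))$. Hitting both sides with $\sigma_A^{k_{\tau_1}(x)}$, using that powers of $\sigma_A$ commute, and then invoking \eqref{eq:tau} for $\tau_1$ on the right-hand side in the form $\sigma_A^{k_{\tau_1}(x)}(\tau_1(x)) = \sigma_A^{l_{\tau_1}(x)}(x)$, one arrives at $\sigma_A^{K(x)}(\tau_2(\tau_1(x))) = \sigma_A^{L(x)}(x)$. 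This is exactly \eqref{eq:tau} for $\tau_2 \circ \tau_1$ with the witnesses $(K, L)$, and hence is the content of the first two identities of the lemma.

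Finally, because $d_\tau$ does not depend on the particular witnesses chosen (as recalled in the paragraph preceding the lemma from \cite[Lemma 7.6]{MMGGD}), the third identity follows immediately from $d_{\tau_2 \circ \tau_1}(x) = L(x) - K(x)$ and the definitions of $d_{\tau_1}$ and $d_{\tau_2}$. There is no real obstacle in this argument; it is a purely formal unwinding of \eqref{eq:tau}. The one conceptual point worth emphasizing is that $k_\tau, l_\tau$ individually are not canonical, so the first two identities should be read as asserting that the exhibited sums form a permissible choice of witnesses for the composition, whereas the third is a genuine equality of intrinsically defined functions.
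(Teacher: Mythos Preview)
Your argument is correct. The paper itself does not supply a proof of this lemma; it simply records the identities as ``straightforward to verify'' and cites \cite[Lemma~7.7]{MMGGD}. Your direct unwinding of \eqref{eq:tau}---applying it first to $\tau_2$ at the point $\tau_1(x)$, then composing with $\sigma_A^{k_{\tau_1}(x)}$ and invoking \eqref{eq:tau} for $\tau_1$---is exactly the natural verification, and your closing remark that the first two displayed identities should be read as exhibiting an admissible choice of $(k_{\tau_2\circ\tau_1}, l_{\tau_2\circ\tau_1})$ rather than as equalities of canonical objects is precisely the right way to interpret the statement.
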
 
A continuous function
$\rho: X_A \times \Gamma_A \longrightarrow \Z$
is called a {\it one-cocycle}\/ 
if it satisfies 
\begin{equation}
\rho(x,\tau_2 \circ \tau_1) 
=
\rho(x, \tau_1) + \rho(\tau_1(x), \tau_2), \qquad x \in X_A, \, \, \tau_1, \tau_2 \in \Gamma_A.
\label{eq:cocycle}
\end{equation}
The identity \eqref{eq:cocycle} implies that
$\rho(x,\id) =0$ for all $x \in X_A$.
For $g \in C(X_A,\Z)$, 
define a {\it coboundary}\/
$\delta_g :X_A \times \Gamma_A \longrightarrow \Z$
by setting
$$
\delta_g(x,\tau) = g(x) - g(\tau(x)), \qquad (x, \tau) \in X_A\times\Gamma_A.
$$
As 
\begin{equation*}
\delta_g(x,\tau_2\circ\tau_1) 
= g(x) - g(\tau_1(x)) + g(\tau_1(x))  - g(\tau_2(\tau_1(x))) 
 = \delta_g(x,\tau_1) + \delta_g(\tau_1(x),\tau_2)
\end{equation*}
for $\tau_1, \tau_2 \in \Gamma_A$, a coboundary is a one-cocycle.
Denote by
$Z^1(\Gamma_A,X_A; \Z) $ and 
$B^1(\Gamma_A,X_A; \Z) $ 
the set of one-cocycles
and that of coboundaries, respectively.
Then 
$Z^1(\Gamma_A,X_A; \Z)$ 
becomes an abelian group with natural addition 
such that 
$B^1(\Gamma_A,X_A; \Z)$ 
is contained as a subgroup of 
$Z^1(\Gamma_A,X_A; \Z)$.
\begin{definition}
The {\it first cohomology group}\/
$H^1(\Gamma_A,X_A; \Z)$ is defioned to be 
the quotient group: 
\begin{equation*}
H^1(\Gamma_A,X_A; \Z)
=Z^1(\Gamma_A,X_A; \Z)/ B^1(\Gamma_A,X_A; \Z).
\end{equation*}
\end{definition}
We set
$Z^1_+(\Gamma_A,X_A; \Z) 
=\{
\rho \in Z^1(\Gamma_A,X_A; \Z) \mid \rho(x,\tau) \ge 0 \, \forall x \in X_A,\tau \in \Gamma_A\}.
$
We define a positive cone $H^1_+(\Gamma_A,X_A; \Z)$
by 
$$
H^1_+(\Gamma_A,X_A; \Z) =
\{ [\rho] \in H^1(\Gamma_A,X_A; \Z) \mid \rho \in Z^1_+(\Gamma_A,X_A; \Z) \}.
$$
We are always assuming that the matrix $A$ is irreducible and not a permutation. 
\begin{lemma}
$
H^1_+(\Gamma_A,X_A; \Z) \cap (-H^1_+(\Gamma_A,X_A; \Z)) = \{ 0\}.
$
\end{lemma}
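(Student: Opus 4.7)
The plan is to unwind the definitions and then exploit the fact that $\Gamma_A$ is a \emph{group} (i.e.\ closed under inverses) to force the relevant coboundary to vanish.

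Suppose $[\rho]\in H^1_+(\Gamma_A,X_A;\Z)\cap(-H^1_+(\Gamma_A,X_A;\Z))$. By definition there exist $\rho_1,\rho_2\in Z^1_+(\Gamma_A,X_A;\Z)$ and $g_1,g_2\in C(X_A,\Z)$ such that
\begin{equation*}
\rho=\rho_1+\delta_{g_1},\qquad -\rho=\rho_2+\delta_{g_2}.
\end{equation*}
Adding these two identities yields $\rho_1+\rho_2=\delta_h$ with $h=-(g_1+g_2)\in C(X_A,\Z)$. Since $\rho_1,\rho_2\ge 0$ pointwise on $X_A\times\Gamma_A$, this means $\delta_h(x,\tau)=h(x)-h(\tau(x))\ge 0$ for every $x\in X_A$ and $\tau\in\Gamma_A$.

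The key step is to upgrade this inequality to an equality by using that $\Gamma_A\subset\Homeo(X_A)$ is a group. For any $\tau\in\Gamma_A$ we also have $\tau^{-1}\in\Gamma_A$, hence $h(x)\ge h(\tau^{-1}(x))$ for every $x$. Substituting $x\mapsto\tau(x)$ gives the reverse inequality $h(\tau(x))\ge h(x)$. Thus $h(\tau(x))=h(x)$ for all $(x,\tau)\in X_A\times\Gamma_A$, i.e.\ $\delta_h\equiv 0$. Notice that neither minimality of the $\Gamma_A$-action nor any assertion that $h$ be globally constant is needed; only that $h$ be $\Gamma_A$-invariant.

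Therefore $\rho_1+\rho_2=0$ pointwise. Since both summands are nonnegative integer-valued, this forces $\rho_1\equiv 0\equiv\rho_2$, whence $\rho=\delta_{g_1}\in B^1(\Gamma_A,X_A;\Z)$ and $[\rho]=0$. The only step requiring any thought is the inversion trick above; everything else is bookkeeping with the definitions of $Z^1$, $B^1$, and the positive cone.
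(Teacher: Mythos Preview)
Your proof is correct, and it reaches the same intermediate step as the paper: from $[\rho]=[\rho_1]=[-\rho_2]$ with $\rho_1,\rho_2\ge 0$ one obtains $\rho_1+\rho_2=\delta_h$ for some $h\in C(X_A,\Z)$, hence $h(x)\ge h(\tau(x))$ for all $(x,\tau)$. The divergence is in how this inequality is discharged. The paper argues that $h$ (there written $g_3$) must be globally constant: if $h(z)<h(w)$, irreducibility of $A$ guarantees enough transitivity of $\Gamma_A$ to produce $\tau_0\in\Gamma_A$ with $h(\tau_0(z))=h(w)$, contradicting the inequality. You instead apply the inequality to $\tau^{-1}\in\Gamma_A$ and substitute $x\mapsto\tau(x)$ to get the reverse inequality, forcing $\delta_h\equiv 0$ directly. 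Your route is strictly more elementary---it uses only that $\Gamma_A$ is closed under inverses and never touches the shift dynamics or the matrix $A$---and, as you note, it sidesteps the question of whether $h$ is actually constant, which is irrelevant to the conclusion. The paper's version does yield the marginally stronger fact that $h$ is constant, but that extra information is not used anywhere.
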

\begin{proof}
Take $[\rho] \in H^1_+(\Gamma_A,X_A; \Z) \cap (-H^1_+(\Gamma_A,X_A; \Z))$
and
$\rho_1,  \rho_2 \in Z^1_+(\Gamma_A,X_A; \Z)$ such that
$[\rho] = [\rho_1] = [-\rho_2]$.
There exist $g_1, g_2 \in C(X_A, \Z)$ such that 
\begin{equation*}
(\rho - \rho_1)(x,\tau) = g_1(x) - g_1(\tau(x)),
\quad
(\rho + \rho_2)(x,\tau) = g_2(x) - g_2(\tau(x))
\quad \text{ for all } (x, \tau) \in X_A\times\Gamma_A.
\end{equation*}
Hence we have
$
\rho_1(x,\tau) + \rho_2(x,\tau) = (g_2-g_1)(x) - (g_2-g_1)(\tau(x)).
$
Put $g_3 = g_2 - g_1$.
Since
$\rho_1(x,\tau), \rho_2(x,\tau) \ge 0$ for all
$ x \in X_A, \tau \in \Gamma_A,$
we have 
\begin{equation}
g_3(x) - g_3(\tau(x)) \ge 0
\qquad \text{ for all }
\quad
(x, \tau) \in X_A\times\Gamma_A. \label{eq:g31}
\end{equation}
Suppose that $g_3$ is not constant.
Take $z, w \in X_A$ such that $g_3(z) \ne g_3(w)$.
We may assume that $g_3(z) < g_3(w)$.
As $g_3$ is continuous and the matrix $A$
is irreducible,
one may find $\tau_0\in \Gamma_A$ such that 
$g_3(w) = g_3(\tau_0(z))$, 
%so that
%$g_3(z)  < g_3(\tau_0(z))$, 
a contradiction to \eqref{eq:g31}.
Therefore $g_3$ is a constant which we denote by $c$.
Since 
$
\rho_1(x,\tau) + \rho_2(x,\tau) = g_3(x) - g_3(\tau(x)) =c - c =0,
$
and 
$\rho_i(x,\tau) \ge 0, i=1,2$
for all $x \in X_A,\tau \in \Gamma_A$,
we conclude that $\rho_1 \equiv \rho_2 \equiv 0$ and hence 
$[\rho] = 0  \in H^1(\Gamma_A,X_A; \Z).$
\end{proof}
Therefore we have
\begin{proposition}
The first cohomology group 
$(H^1(\Gamma_A,X_A;\Z), H^1_+(\Gamma_A,X_A; \Z))$
is an ordered group.
\end{proposition}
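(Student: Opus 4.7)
The plan is to verify the defining axioms of an ordered abelian group for the pair $(H^1(\Gamma_A, X_A; \Z), H^1_+(\Gamma_A, X_A; \Z))$. Recall that an ordered abelian group $(G, G_+)$ consists of an abelian group $G$ together with a distinguished positive cone $G_+$ satisfying (a) $0 \in G_+$ and $G_+ + G_+ \subseteq G_+$, and (b) $G_+ \cap (-G_+) = \{0\}$.

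Condition (b) is precisely the content of the preceding lemma, so it requires no further work. For condition (a), I would argue as follows. The zero cohomology class is represented by the zero one-cocycle, which is trivially pointwise nonnegative, hence $0 \in H^1_+$. For closure under addition, I would take $[\rho_1], [\rho_2] \in H^1_+$ and, by the definition of the positive cone, choose representatives $\rho_1, \rho_2 \in Z^1_+(\Gamma_A, X_A; \Z)$, i.e.\ integer-valued one-cocycles with $\rho_i(x,\tau) \geq 0$ for all $(x,\tau) \in X_A \times \Gamma_A$. Their pointwise sum $\rho_1 + \rho_2$ again satisfies the cocycle identity \eqref{eq:cocycle} (since that identity is linear in $\rho$) and is pointwise nonnegative, so $[\rho_1] + [\rho_2] = [\rho_1 + \rho_2]$ lies in $H^1_+$.

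There is no real obstacle here: the proposition is essentially a packaging of the preceding lemma together with the elementary observation that $Z^1_+(\Gamma_A, X_A; \Z)$ is closed under addition, which matches the text's wording ``Therefore we have.'' All the substantive content already resides in the lemma itself, where irreducibility of $A$ was used to force the coboundary function $g_3$ to be constant.
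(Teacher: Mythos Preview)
Your proposal is correct and matches the paper's approach exactly: the paper gives no proof beyond ``Therefore we have,'' treating the proposition as an immediate repackaging of the preceding lemma together with the trivial closure properties of $Z^1_+$ under addition, precisely as you describe.
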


For $f \in C(X_A,\Z)$,
recall that the function
$\rho^f:X_A\times \Gamma_A \longrightarrow \Z$
is defined by \eqref{eq:rhof2}. 
Notice that   
\begin{equation} \label{eq:rhof21}
\rho^f(x,\tau)  
= f^{l_\tau(x)}(x) - f^{k_\tau(x)}(\tau(x)), \qquad
(x,\tau) \in X_A\times \Gamma_A. 
\end{equation}
For $f \equiv 1_{X_A}$,
we set
$ d^A(x,\tau) :=\rho^1(x,\tau)= d_\tau(x) \,(=l_{\tau}(x) - k_{\tau}(x)), \, (x,\tau) \in X_A\times \Gamma_A$.
\begin{lemma}\label{lem:rhoftau21}
For $f \in C(X_A,\Z)$, we have
\begin{equation*}
\rho^f(x,\tau_2 \circ \tau_1) 
=
\rho^f(x, \tau_1) + \rho^f(\tau_1(x), \tau_2), \qquad x \in X_A, \tau_1, \tau_2 \in \Gamma_A.
\end{equation*}
Hence $\rho^f \in Z^1(\Gamma_A, X_A;\Z)$ and 
$[\rho^f] \in H^1(\Gamma_A, X_A;\Z).$ 
\end{lemma}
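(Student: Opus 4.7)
The plan is a direct verification using the compact reformulation \eqref{eq:rhof21}, $\rho^f(x,\tau) = f^{l_\tau(x)}(x) - f^{k_\tau(x)}(\tau(x))$, together with Lemma \ref{lem:2.1} and the Birkhoff-sum telescoping identity \eqref{eq:fmn}, $f^{n+m}(x) = f^m(x) + f^n(\sigma_A^m(x))$. Put $y = \tau_1(x)$ and $z = \tau_2(y)$. Lemma \ref{lem:2.1} lets me choose $l_{\tau_2\circ\tau_1}(x) = l_{\tau_1}(x) + l_{\tau_2}(y)$ and $k_{\tau_2\circ\tau_1}(x) = k_{\tau_1}(x) + k_{\tau_2}(y)$, and one checks immediately that these are consistent with the common-landing identity $\sigma_A^{l_{\tau_2\circ\tau_1}(x)}(x) = \sigma_A^{k_{\tau_2\circ\tau_1}(x)}(z)$ by iterating the landing identities for $\tau_1$ and $\tau_2$.

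Next I would apply \eqref{eq:fmn} once to each half of
$\rho^f(x,\tau_2\circ\tau_1) = f^{l_{\tau_1}(x)+l_{\tau_2}(y)}(x) - f^{k_{\tau_1}(x)+k_{\tau_2}(y)}(z)$,
splitting the first with $m = l_{\tau_1}(x)$ and the second with $m = k_{\tau_2}(y)$ on the outside. This produces the desired outer pieces $f^{l_{\tau_1}(x)}(x)$ and $f^{k_{\tau_2}(y)}(z)$, plus two correction terms whose shift base points are $\sigma_A^{l_{\tau_1}(x)}(x)$ and $\sigma_A^{k_{\tau_2}(y)}(z)$ respectively. Using $\sigma_A^{l_{\tau_1}(x)}(x) = \sigma_A^{k_{\tau_1}(x)}(y)$ and one more invocation of \eqref{eq:fmn}, the first correction becomes $f^{k_{\tau_1}(x)+l_{\tau_2}(y)}(y) - f^{k_{\tau_1}(x)}(y)$, which re-splits (in the opposite order) as $f^{l_{\tau_2}(y)}(y) + f^{k_{\tau_1}(x)}(\sigma_A^{l_{\tau_2}(y)}(y)) - f^{k_{\tau_1}(x)}(y)$. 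Invoking the second landing identity $\sigma_A^{l_{\tau_2}(y)}(y) = \sigma_A^{k_{\tau_2}(y)}(z)$ then matches $f^{k_{\tau_1}(x)}(\sigma_A^{l_{\tau_2}(y)}(y))$ with the correction arising from the $k$-side split, so the two cancel pairwise and I am left with
$$[f^{l_{\tau_1}(x)}(x) - f^{k_{\tau_1}(x)}(y)] + [f^{l_{\tau_2}(y)}(y) - f^{k_{\tau_2}(y)}(z)] = \rho^f(x,\tau_1) + \rho^f(\tau_1(x),\tau_2).$$

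The only real obstacle is bookkeeping: one must choose compatible orderings inside each application of \eqref{eq:fmn} so that the shift base points of the correction terms can be aligned through the two landing-point identities; picking the splits with the ``$\tau_1$-indices on the outside in the $l$-part'' and the ``$\tau_2$-indices on the outside in the $k$-part'' makes the cancellation automatic. Continuity of $\rho^f$ in $x$ (for fixed $\tau$) is immediate from continuity of $f$, $l_\tau$, $k_\tau$, $\tau$, and $\sigma_A$, and $\rho^f(x,\id)=0$ from the trivial choice $l_{\id}=k_{\id}=0$. Together with the cocycle identity just established this gives $\rho^f \in Z^1(\Gamma_A, X_A;\Z)$, hence $[\rho^f] \in H^1(\Gamma_A, X_A;\Z)$.
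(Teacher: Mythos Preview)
Your proposal is correct and follows essentially the same approach as the paper: both proofs start from the formula \eqref{eq:rhof21}, use Lemma~\ref{lem:2.1} to write $l_{\tau_2\circ\tau_1}$ and $k_{\tau_2\circ\tau_1}$ additively, and then repeatedly apply the telescoping identity \eqref{eq:fmn} together with the two landing identities $\sigma_A^{l_{\tau_1}(x)}(x)=\sigma_A^{k_{\tau_1}(x)}(\tau_1(x))$ and $\sigma_A^{l_{\tau_2}(y)}(y)=\sigma_A^{k_{\tau_2}(y)}(\tau_2(y))$ to match and cancel the correction terms. Your use of the abbreviations $y=\tau_1(x)$, $z=\tau_2(y)$ makes the bookkeeping somewhat tidier than the paper's line-by-line expansion, but the underlying computation is identical.
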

\begin{proof}
For $ x \in X_A, \tau_1, \tau_2 \in \Gamma_A,$ 
we have by Lemma \ref{lem:2.1} and \eqref{eq:fmn}
\begin{align*}
\rho^f(x,\tau_2 \circ \tau_1) 
% =& \sum_{i=0}^{l_{\tau_2 \circ\tau_1}(x) -1}f(\sigma_A^i(x))
% -   \sum_{j=0}^{k_{\tau_2 \circ\tau_1}(x) -1}f(\sigma_A^j({\tau_2 \circ\tau_1}(x)) \\
%=& f^{l_{\tau_2 \circ\tau_1}(x)} (x) - f^{k_{\tau_2 \circ\tau_1}(x)} ({\tau_2 \circ\tau_1}(x)) \\ 
= &f^{l_{\tau_1}(x) +l_{\tau_2}(\tau_1(x))} (x) 
   - f^{k_{\tau_1}(x) +k_{\tau_2}(\tau_1(x))}{(\tau_2 \circ\tau_1}(x)) \\ 
=& f^{l_{\tau_1}(x)}(x)  + f^{l_{\tau_2}(\tau_1(x))} (\sigma_A^{l_{\tau_1}(x)}(x)) 
     - f^{k_{\tau_1(x)}}(\tau_1(x))  \\
   &+ f^{k_{\tau_1}(x)}(\tau_1(x)) 
     -\{  f^{k_{\tau_2}(\tau_1(x))}({\tau_2 \circ\tau_1}(x))
        + f^{k_{\tau_1}(x)} (\sigma_A^{k_{\tau_2}(\tau_1(x) ) }({\tau_2 \circ\tau_1}(x)) )\} \\
%=& \rho^f(x,\tau_1)  + f^{l_{\tau_2}(\tau_1(x))} (\sigma_A^{l_{\tau_1}(x)}(x))  \\
%   &+ f^{k_{\tau_1}(x)}(\tau_1(x)) 
 %    -\{  f^{k_{\tau_2}(\tau_1(x))}({\tau_2 \circ\tau_1}(x))
 %       + f^{k_{\tau_1}(x)} (\sigma_A^{k_{\tau_2}(\tau_1(x))}({\tau_2 \circ\tau_1}(x)) \} \\
 =& \rho^f(x,\tau_1)  + f^{l_{\tau_2}(\tau_1(x))} (\sigma_A^{k_{\tau_1}(x)}(\tau_1(x)))  \\
   &+ f^{k_{\tau_1(x)}}(\tau_1(x)) 
     -\{  f^{k_{\tau_2}(\tau_1(x))}({\tau_2 \circ\tau_1}(x))
        + f^{k_{\tau_1}(x)} (\sigma_A^{l_{\tau_2}(\tau_1(x) )}(\tau_1(x)) )\} \\
 =& \rho^f(x,\tau_1)  + f^{l_{\tau_2}(\tau_1(x)) + k_{\tau_1}(x)}(\tau_1(x)) \\ 
   &  -\{  f^{k_{\tau_2}(\tau_1(x))}({\tau_2 \circ\tau_1}(x))
        + f^{k_{\tau_1}(x)} (\sigma_A^{l_{\tau_2}(\tau_1(x) )}(\tau_1(x) ) ) \} \\
=& \rho^f(x,\tau_1)  + f^{l_{\tau_2}(\tau_1(x))}(\tau_1(x))    
  + f^{k_{\tau_1}(x)} (\sigma_A^{l_{\tau_2}(\tau_1(x))}(\tau_1(x)) \\
   &  -\{  f^{k_{\tau_2}(\tau_1(x))}({\tau_2 \circ\tau_1}(x))
        + f^{k_{\tau_1}(x)} (\sigma_A^{l_{\tau_2}(\tau_1(x) )}(\tau_1(x) ) ) \} \\
%=& \rho^f(x,\tau_1)  + f^{l_{\tau_2}(\tau_1(x)) + k_{\tau_1}(x)}(\tau_1(x)) \\ 
%  & + f^{l_{\tau_2}(\tau_1(x))}(\tau_1(x)) -  f^{k_{\tau_2}(\tau_1(x))}({\tau_2 \circ\tau_1}(x)) \\
%  & -f^{l_{\tau_2}(\tau_1(x))}(\tau_1(x))
%        - f^{k_{\tau_1}(x)} (\sigma_A^{l_{\tau_2}(\tau_1(x))}(\tau_1(x) )) \\
%=& \rho^f(x,\tau_1)  + f^{l_{\tau_2}(\tau_1(x)) + k_{\tau_1}(x)}(\tau_1(x)) 
%   + \rho^f(\tau_1(x), \tau_2) \\
%  & -f^{l_{\tau_2}(\tau_1(x))}(\tau_1(x))
%        - f^{k_{\tau_1}(x)} (\sigma_A^{l_{\tau_2}(\tau_1(x))}(\tau_1(x)) \\
=& \rho^f(x,\tau_1)  + \rho^f(\tau_1(x), \tau_2).
\hspace{6cm}\qed
\end{align*}
\renewcommand{\qed}{}
\end{proof}
\begin{lemma}\label{eq:rhofzero}
For  $f \in C(X_A,\Z)$, 
we have
%\begin{enumerate}
%\renewcommand{\theenumi}{\roman{enumi}}
%\renewcommand{\labelenumi}{\textup{(\theenumi)}}
%\item
%$\rho^f(x,\tau) =0$ for all $x \in X_A,\,  \tau \in \Gamma_A^{\AF}$
%if and only if $f\equiv c$ for some constant $c$.
%\item
$\rho^f(x,\tau) =0$ for all $(x, \tau) \in X_A\times \Gamma_A$
if and only if $f\equiv 0$.
%\end{enumerate}
\end{lemma}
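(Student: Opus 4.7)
The ``if'' direction is immediate from the formula \eqref{eq:rhof21}. For the converse, I assume $\rho^f \equiv 0$ and exhibit enough ``cylinder-swap'' elements $\tau_{\mu,\nu} \in \Gamma_A$ on which to test the hypothesis and force $f = 0$.

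Since $f$ is continuous and integer-valued on the compact space $X_A$, there is $k_0 \in \N$ such that $f$ is constant on every cylinder $U_\omega$ with $\omega \in B_{k_0}(X_A)$; write $c_\omega := f|_{U_\omega}$, so that the goal becomes $c_\omega = 0$ for every $\omega \in B_{k_0}(X_A)$. Given admissible words $\mu \in B_m(X_A)$ and $\nu \in B_n(X_A)$ with $m, n \geq k_0$ sharing the same last $k_0$ letters and having disjoint cylinders, define $\tau_{\mu,\nu}$ by $\tau_{\mu,\nu}(x) = \nu_1 \cdots \nu_n \sigma_A^m(x)$ for $x \in U_\mu$, symmetrically on $U_\nu$, and $\tau_{\mu,\nu} = \id$ elsewhere. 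The common last letter $\mu_m = \nu_n$ makes this a well-defined homeomorphism, and the equality $\sigma_A^m(x) = \sigma_A^n(\tau_{\mu,\nu}(x))$ on $U_\mu$ places $\tau_{\mu,\nu}$ in $\Gamma_A$ with $l_{\tau_{\mu,\nu}} = m$ and $k_{\tau_{\mu,\nu}} = n$ there.

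The central computation is that of $\rho^f(x, \tau_{\mu,\nu})$ for $x \in U_\mu$. Each summand $f(\sigma_A^i(x))$ in $f^m(x)$ depends only on the length-$k_0$ window $x_{i+1} \cdots x_{i+k_0}$, and similarly for the summands of $f^n(\tau_{\mu,\nu}(x))$. Summands whose window lies inside positions $1,\dots,m$ (respectively $1,\dots,n$) equal $c_{\mu_{i+1}\cdots\mu_{i+k_0}}$ (respectively $c_{\nu_{j+1}\cdots\nu_{j+k_0}}$), while the remaining ``boundary'' summands, whose windows straddle position $m$ in $x$ or position $n$ in $\tau_{\mu,\nu}(x)$, depend only on the last $k_0 - 1$ letters of $\mu$ (resp.\ of $\nu$) together with the initial letters of $\sigma_A^m(x)$. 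The matching-suffix hypothesis makes these boundary contributions coincide and cancel, leaving
\[
\rho^f(x, \tau_{\mu,\nu}) = \sum_{i=0}^{m - k_0} c_{\mu_{i+1} \cdots \mu_{i+k_0}} - \sum_{j=0}^{n - k_0} c_{\nu_{j+1} \cdots \nu_{j+k_0}}.
\]

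Taking $\nu$ to be the length-$k_0$ suffix of $\mu$ (which shares its last $k_0$ letters with $\mu$ automatically, and whose cylinder is disjoint from $U_\mu$ provided $\nu$ is not a prefix of $\mu$), the hypothesis $\rho^f\equiv 0$ forces $\sum_{i=0}^{m-k_0-1} c_{\mu_{i+1}\cdots\mu_{i+k_0}} = 0$ for every admissible $\mu$ of length $m \geq k_0 + 1$. Specializing to $m = k_0 + 1$ gives $c_{\mu_1 \cdots \mu_{k_0}} = 0$. Since $A$ is irreducible and not a permutation, every length-$k_0$ admissible word $\omega$ admits a length-$k_0 + 1$ extension $\omega a'$ for which the suffix-is-prefix obstruction (present only when $\omega = a^{k_0}$ and $a' = a$) can be avoided by choosing $a' \ne a$, an option always available because irreducibility together with $N\geq 2$ forces every vertex to possess an outgoing edge to some distinct vertex. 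Hence $c_\omega = 0$ for every $\omega \in B_{k_0}(X_A)$, so $f \equiv 0$. The principal difficulty in this plan is the boundary-term cancellation in the central computation; once that is carried out, the remainder is combinatorial bookkeeping.
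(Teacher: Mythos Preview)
Your argument is correct, but the paper's proof is dramatically shorter and worth knowing. The paper simply observes (invoking \cite[Lemma~3.2]{MaPacific}) that for every $x\in X_A$ there is an element $\tau_\mu\in\Gamma_A$ with $\tau_\mu(x)=\sigma_A(x)$, $l_{\tau_\mu}(x)=1$, and $k_{\tau_\mu}(x)=0$; plugging this into \eqref{eq:rhof21} gives $\rho^f(x,\tau_\mu)=f^1(x)-f^0(\sigma_A(x))=f(x)$, so the hypothesis $\rho^f\equiv 0$ instantly forces $f(x)=0$ at every point. No windowing, no boundary cancellation, no combinatorics on constant words is needed.

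Your route, by contrast, builds cylinder-swap involutions $\tau_{\mu,\nu}$, carries out a careful boundary-term cancellation to reduce $\rho^f(x,\tau_{\mu,\nu})$ to a signed sum of subword values $c_\omega$, and then uses irreducibility to handle the residual periodic-word obstruction. This is all valid and entirely self-contained (you never appeal to an external lemma about $\Gamma_A$), and it does make explicit how the full group is rich enough to separate the values of $f$. But the paper's single well-chosen $\tau$ collapses $\rho^f(x,\tau)$ to $f(x)$ in one stroke, so the combinatorial machinery you develop is unnecessary here.
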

\begin{proof}
Suppose that 
$\rho^f(x,\tau) =0$ for all $(x, \tau) \in X_A\times \Gamma_A$.
For any $x = (x_n)_{n\in \N} \in X_A$, 
let $\mu = (x_1,x_2) \in B_2(X_A)$.
By \cite[Lemma 3.2]{MaPacific},
one may find $\tau_\mu \in \Gamma_A$ such that 
\begin{equation*}
\tau_\mu(x) = \sigma_A(x) \quad
\text{ and }
\quad
 k_{\tau_\mu}(x)=0,\qquad l_{\tau_\mu}(x)=1.
\end{equation*}
Hence we have
$
0= \rho^f(x,\tau_\mu) =f(x) + f(\sigma_A(x)) - f(\tau_\mu(x)) = f(x). 
$
%%%%%%%%%%%%%%%%%%%%%%%%%%%%%%%%%%%%%%%%%
%(i) If $f\equiv c$ for some constant $c$, then it is easy to see that
%$\rho^f(x,\tau) =0$ for all $x \in X_A,\,  \tau \in \Gamma_A^{\AF}$.
%Suppose that $f$ is not constant. As $A$ is aperiodic,
%there exists $z \in X_A$ and $\tau \in \Gamma_A^{\AF}$
%such that $f(z) \ne f(\tau(z)$.
%Hence one may find $x \in X_A$ and $\tau \in \Gamma_A^{\AF}$ such that 
%$$
%f(x) \ne f(\tau(x)), \qquad
%f(\sigma_A^i(x)) = f(\sigma_A^i(\tau(x))) \quad \text{ for all } i=1,2,\dots.
%$$
%As $k_\tau(x) = l_\tau(x)$, we have
%\begin{equation*}
%\rho^f(x,\tau)  
%= \sum_{i=0}^{l_\tau(x)}f(\sigma_A^i(x)) -\sum_{j=0}^{k_\tau(x)}f(\sigma_A^j(\tau(x)))
%= f(x) - f(\tau(x)) \ne 0.
%\end{equation*}
%%%%%%%%%%%%%%%%%%%%%%%%%%%%%%%%
%(ii) It suffices to show the only if part.
%Assume that $\rho^f(x,\tau) =0$ for all $x \in X_A,\,  \tau \in \Gamma_A$.
%By (i), we know that  $f\equiv c$ a constant. Hence we have
%\begin{equation*}
%\rho^f(x,\tau)  
%= \sum_{i=0}^{l_\tau(x)}f(\sigma_A^i(x)) -\sum_{j=0}^{k_\tau(x)}f(\sigma_A^j(\tau(x)))
%= c d_{\tau}(x) 
%\end{equation*}
%for all $x \in X_A$ and $\tau \in \Gamma_A$.
%Since  there exists $\tau \in \Gamma_A$ and $x \in X_A$
%such that $d_\tau(x) \ne 0$,we conclude that $c=0$. 
%\renewcommand{\qed}{}
\end{proof}
For $\tau \in \Gamma_A$, define
\begin{equation*}
l_{\tau,1}(x)  = l_\tau(\sigma_A(x)) + k_\tau(x) +1, \qquad
k_{\tau,1}(x)  = k_\tau(\sigma_A(x)) + l_\tau(x).
%c_{\tau,1}(x) = l_{1,\tau}(x) - k_{1,\tau}(x) 
\end{equation*}
\begin{lemma}
$
\sigma_A^{k_{\tau,1}(x)}(\tau(\sigma_A(x))) = \sigma_A^{l_{\tau,1}(x)}(\tau(x)).
$
\end{lemma}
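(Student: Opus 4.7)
The plan is to reduce the claimed identity to the defining relation \eqref{eq:tau} for $\tau \in \Gamma_A$, applied at both $x$ and $\sigma_A(x)$, and then to use commutativity of the iterated shifts to combine the two.

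First I would record the two instances of \eqref{eq:tau} that will be used. At the point $x$ we have
\begin{equation*}
\sigma_A^{k_\tau(x)}(\tau(x)) = \sigma_A^{l_\tau(x)}(x),
\end{equation*}
and at the point $\sigma_A(x)$ we have
\begin{equation*}
\sigma_A^{k_\tau(\sigma_A(x))}(\tau(\sigma_A(x))) = \sigma_A^{l_\tau(\sigma_A(x))}(\sigma_A(x)) = \sigma_A^{l_\tau(\sigma_A(x))+1}(x).
\end{equation*}

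Next I would apply $\sigma_A^{l_\tau(x)}$ to both sides of the second identity. On the left this gives
\begin{equation*}
\sigma_A^{k_\tau(\sigma_A(x))+l_\tau(x)}(\tau(\sigma_A(x))) = \sigma_A^{k_{\tau,1}(x)}(\tau(\sigma_A(x))),
\end{equation*}
which is exactly the left-hand side of the desired identity. On the right it gives $\sigma_A^{l_\tau(\sigma_A(x))+1+l_\tau(x)}(x)$, and here I would feed the first instance of \eqref{eq:tau} back in, rewriting $\sigma_A^{l_\tau(x)}(x)$ as $\sigma_A^{k_\tau(x)}(\tau(x))$, to obtain
\begin{equation*}
\sigma_A^{l_\tau(\sigma_A(x))+1+l_\tau(x)}(x) = \sigma_A^{l_\tau(\sigma_A(x))+1}\bigl(\sigma_A^{k_\tau(x)}(\tau(x))\bigr) = \sigma_A^{l_{\tau,1}(x)}(\tau(x)).
\end{equation*}
Chaining these equalities yields the claim.

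There is no real obstacle here; the only point of care is bookkeeping of the exponents and the order in which one applies \eqref{eq:tau}, to make sure that after composing with $\sigma_A^{l_\tau(x)}$ the right-hand side can be recast so that the shift acts on $\tau(x)$ rather than on $x$.
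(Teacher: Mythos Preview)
Your proof is correct and follows essentially the same route as the paper's: both arguments apply the defining relation \eqref{eq:tau} at $x$ and at $\sigma_A(x)$, then use commutativity of the iterated shifts to splice the two together. Your presentation is slightly more explicit about the step of applying $\sigma_A^{l_\tau(x)}$ to both sides, but the content is identical.
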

\begin{proof}
We have
\begin{align*}
\sigma_A^{k_{\tau,1}(x)}(\tau(\sigma_A(x)))
 =& \sigma_A^{k_\tau(\sigma_A(x)) + l_\tau(x)}(\tau(\sigma_A(x))) 
 = \sigma_A^{l_\tau(x)}(\sigma_A^{l_\tau(\sigma_A(x)) }(\sigma_A(x)) \\
 %=& \sigma_A^{l_\tau(\sigma_A(x)) +1}(\sigma_A^{l_\tau(x) }(x)) 
= & \sigma_A^{l_\tau(\sigma_A(x))+1}(\sigma_A^{k_\tau(x) }(\tau(x))) 
%=& \sigma_A^{l_\tau(\sigma_A(x)) +k_\tau(x) }(\tau(x))) \\
=  \sigma_A^{l_{\tau,1}}(\tau(x)). \hspace{2cm}\qed
\end{align*}
\renewcommand{\qed}{}
\end{proof}
For  $\tau \in \Gamma_A$ and $f \in C(X_A,\Z)$,
we define 
$\varPsi_\tau(f) \in C(X_A,\Z)$ by setting
\begin{align*}
\varPsi_\tau(f)(x) 
= & \sum_{i=0}^{l_{\tau,1}(x)}f(\sigma_A^i(\tau(x)))
 -\sum_{j=0}^{k_{\tau,1}(x)}f(\sigma_A^j(\tau(\sigma_A(x)))) \\
(\, = &f^{l_{\tau,1}(x)}(\tau(x)) - f^{k_{\tau,1}(x)}(\tau(\sigma_A(x))) \, ).
\end{align*}

\begin{lemma}\label{lem:2.8}
For  $f \in C(X_A,\Z)$ and $(x,\tau) \in X_A\times \Gamma_A$,
we have
\begin{enumerate}
\renewcommand{\theenumi}{\roman{enumi}}
\renewcommand{\labelenumi}{\textup{(\theenumi)}}
\item
$\rho^f(x,\tau) - \rho^f(\sigma_A(x),\tau) = f(x) - \varPsi_\tau(f)(x)$. 
\item 
$\rho^f(x,\tau) - \rho^{f\circ\sigma_A}(x,\tau) = f(x) - f(\tau(x))$,
and hence $\rho^{f - f\circ\sigma_A} = \delta_f$. 
\end{enumerate}
\end{lemma}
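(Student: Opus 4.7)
The plan is to establish both identities by direct expansion, using the fundamental concatenation formula \eqref{eq:fmn} together with the defining orbit relation $\sigma_A^{l_\tau(x)}(x) = \sigma_A^{k_\tau(x)}(\tau(x))$ for elements of $\Gamma_A$. Both proofs rely on peeling off boundary terms with \eqref{eq:fmn} and then cancelling the corresponding $f$-values using the orbit equation.

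For part (ii), the first step is to observe that $(f\circ\sigma_A)^n(y) = f^{n+1}(y) - f(y)$, directly from the definition of $f^n$. Substituting this into the expression \eqref{eq:rhof21} for $\rho^{f\circ\sigma_A}(x,\tau)$ and then splitting the boundary summand via \eqref{eq:fmn} in the form $f^{l_\tau(x)+1}(x) = f^{l_\tau(x)}(x) + f(\sigma_A^{l_\tau(x)}(x))$ and $f^{k_\tau(x)+1}(\tau(x)) = f^{k_\tau(x)}(\tau(x)) + f(\sigma_A^{k_\tau(x)}(\tau(x)))$ yields
\[
\rho^{f\circ\sigma_A}(x,\tau) = \rho^f(x,\tau) + f(\sigma_A^{l_\tau(x)}(x)) - f(\sigma_A^{k_\tau(x)}(\tau(x))) - f(x) + f(\tau(x)).
\]
Since $\sigma_A^{l_\tau(x)}(x) = \sigma_A^{k_\tau(x)}(\tau(x))$, the two boundary $f$-values cancel, giving the first identity of (ii). The formula $\rho^{f - f\circ\sigma_A} = \delta_f$ then follows immediately from the linearity of $f\mapsto \rho^f$ in its first argument and the definition of $\delta_f$.

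For part (i), the bookkeeping is more elaborate but proceeds identically in spirit. Writing out $\rho^f(x,\tau) - \rho^f(\sigma_A(x),\tau)$ via \eqref{eq:rhof21} and expanding $\varPsi_\tau(f)(x)$ by its definition, I apply \eqref{eq:fmn} to split
\[
f^{l_{\tau,1}(x)}(\tau(x)) = f^{k_\tau(x)}(\tau(x)) + f^{l_\tau(\sigma_A(x))+1}(\sigma_A^{k_\tau(x)}(\tau(x)))
\]
at position $k_\tau(x)$, and similarly
\[
f^{k_{\tau,1}(x)}(\tau(\sigma_A(x))) = f^{k_\tau(\sigma_A(x))}(\tau(\sigma_A(x))) + f^{l_\tau(x)}(\sigma_A^{k_\tau(\sigma_A(x))}(\tau(\sigma_A(x))))
\]
at position $k_\tau(\sigma_A(x))$. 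Using the orbit relations $\sigma_A^{k_\tau(x)}(\tau(x)) = \sigma_A^{l_\tau(x)}(x)$ and $\sigma_A^{k_\tau(\sigma_A(x))}(\tau(\sigma_A(x))) = \sigma_A^{l_\tau(\sigma_A(x))+1}(x)$, the residual sums become $f^{l_\tau(\sigma_A(x))+1}(\sigma_A^{l_\tau(x)}(x))$ and $f^{l_\tau(x)}(\sigma_A^{l_\tau(\sigma_A(x))+1}(x))$, which by two applications of \eqref{eq:fmn} are both equal to $f^{l_\tau(x)+l_\tau(\sigma_A(x))+1}(x) - f^{l_\tau(x)}(x)$ and $f^{l_\tau(x)+l_\tau(\sigma_A(x))+1}(x) - f^{l_\tau(\sigma_A(x))+1}(x)$ respectively. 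The difference of these collapses to $f^{l_\tau(\sigma_A(x))+1}(x) - f^{l_\tau(x)}(x) = f(x) + f^{l_\tau(\sigma_A(x))}(\sigma_A(x)) - f^{l_\tau(x)}(x)$, which together with the remaining $\tau$-terms exactly produces the claimed identity after rearrangement.

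The main obstacle is purely combinatorial: choosing the correct split points in \eqref{eq:fmn} and carefully tracking on which orbit each partial sum is based. No deeper idea beyond the telescoping identity \eqref{eq:fmn} and the orbit equation defining $\Gamma_A$ is needed.
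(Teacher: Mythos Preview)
Your proof is correct and follows essentially the same approach as the paper's own argument: both parts are handled by repeated application of the telescoping identity \eqref{eq:fmn} together with the orbit relation $\sigma_A^{l_\tau(x)}(x)=\sigma_A^{k_\tau(x)}(\tau(x))$, and the split points you choose in part~(i) coincide with those in the paper. The only cosmetic difference is that the paper adds $\rho^f(x,\tau)$ to $\varPsi_\tau(f)(x)$ at the outset (so the $f^{k_\tau(x)}(\tau(x))$ terms cancel immediately), whereas you expand $\varPsi_\tau(f)(x)$ in isolation first and reassemble at the end; the computations are otherwise identical.
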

\begin{proof}
(i) By \eqref{eq:fmn}, we have the following equalities:
\begin{align*}
   & \varPsi_\tau(f)(x) + \rho^f(x,\tau) \\ 
%= &f^{l_{1,\tau}(x)}(\tau(x)) - f^{k_{1,\tau}(x)}(\tau(\sigma_A(x)))
%  + f^{l_{\tau}(x)}(x) - f^{k_{\tau}(x)}(\tau(x))\\
= &f^{l_\tau(\sigma_A(x)) + k_\tau(x) +1}(\tau(x)) - f^{k_\tau(\sigma_A(x)) + l_\tau(x)}(\tau(\sigma_A(x)))
  + f^{l_{\tau}(x)}(x) - f^{k_{\tau}(x)}(\tau(x))\\
= &f^{l_\tau(\sigma_A(x)) + 1}(\sigma_A^{k_\tau(x)}(\tau(x)) )
  - f^{k_\tau(\sigma_A(x))}(\tau(\sigma_A(x)))
  - f^{l_\tau(x)}(\sigma_A^{k_\tau(\sigma_A(x))}(\tau(\sigma_A(x))))
  + f^{l_{\tau}(x)}(x) \\
= &f^{l_\tau(\sigma_A(x)) + 1}(\sigma_A^{l_\tau(x)}(x) )
  - f^{k_\tau(\sigma_A(x))}(\tau(\sigma_A(x)))
  - f^{l_\tau(x)}(\sigma_A^{l_\tau(\sigma_A(x))}(\sigma_A(x)))
  + f^{l_{\tau}(x)}(x) \\
= &f^{l_{\tau}(x)+l_\tau(\sigma_A(x)) + 1}(x) 
  - f^{k_\tau(\sigma_A(x))}(\tau(\sigma_A(x)))
  - f^{l_\tau(x)}(\sigma_A^{l_\tau(\sigma_A(x)) +1}(x)) \\
%= &f^{ l_\tau(\sigma_A(x)) + 1}(x) 
%  - f^{k_\tau(\sigma_A(x))}(\tau(\sigma_A(x)))  \\
= &f^{ l_\tau(\sigma_A(x)) }(\sigma_A(x)) + f(x) 
  - f^{k_\tau(\sigma_A(x))}(\tau(\sigma_A(x)))  
=   \rho^f(\sigma_A(x),\tau) + f(x).
\end{align*}
(ii) We also have the following equalities:
\begin{align*}
   &\rho^{f\circ\sigma_A}(x,\tau) + f(x) \\
= &f^{l_\tau(x)}(\sigma_A(x) )
  - f^{k_\tau(x)}(\sigma_A(\tau(x))) + f(x) \\
= &\{ f^{l_{\tau}(x)}(x) - f(x) + f(\sigma_A^{l_\tau(x)}(x))\}
  - \{ f^{k_{\tau}(x)}(\tau(x)) - f(\tau(x)) + f(\sigma_A^{k_\tau(x)}(\tau(x))\} + f(x) \\
= & f^{l_{\tau}(x)}(x) - f^{k_{\tau}(x)}(\tau(x)) + f(\tau(x)) 
=  \rho^f(x,\tau) + f(\tau(x)).\hspace{5cm}
\qed
\end{align*}
\renewcommand{\qed}{}
\end{proof}
Following \cite{MMKyoto} (cf.  \cite{BH}, \cite{Po}),
let us define the  ordered group $(H^A, H^A_+)$
for the one-sided topological Markov shift $(X_A,\sigma_A)$ by
\begin{align*}
H^A  & = C(X_A, \Z) /\{ f - f\circ\sigma_A\mid f \in C(X_A, \Z)\}, \\ 
H^A_{+} & = \{ [f ]  \in H^A \mid f (x) \ge 0 \text{ for all } x \in X_A \}. 
\end{align*}
The two-sided ordered cohomology group $(\bar{H}^A,\bar{H}^A_+)$
have been defined in a similar way in \cite{Po} (cf. \cite{BH}).
In \cite[Lemma 3.1]{MMKyoto},
it was proved that the two ordered cohomology groups
 $(H^A, H^A_+)$ and $(\bar{H}^A,\bar{H}^A_+)$ are actually isomorphic
 as ordered groups.
We have to mention that Boyle-Handelman \cite{BH} proved that the latter 
ordered group is a complete invariant of flow equivalence of two-sided topological Markov shifts.
These two ordered groups together with the Boyle--Handelman result
have played crucial r\^oles in discussions given in \cite{MMKyoto} and \cite{MMETDS}.
\begin{corollary}
For  $f \in C(X_A,\Z)$ and $\tau \in \Gamma_A$,
we have
\begin{enumerate}
\renewcommand{\theenumi}{\roman{enumi}}
\renewcommand{\labelenumi}{\textup{(\theenumi)}}
\item
$[\varPsi_\tau(f)] = [f]$ in $H^A$ for all 
$f \in C(X_A,\Z)$ and $\tau \in \Gamma_A$. 
\item 
$[\rho^f] =[\rho^{f\circ\sigma_A}]$ in $H^1(\Gamma_A,X_A; \Z)$
for all 
$f \in C(X_A,\Z)$. 
\end{enumerate}
\end{corollary}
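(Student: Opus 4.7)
The plan is to deduce both statements directly from Lemma \ref{lem:2.8}, without any new computation.

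For part (i), fix $\tau \in \Gamma_A$ and define $g \in C(X_A,\Z)$ by $g(x) := \rho^f(x,\tau)$. Continuity and integrality of $g$ as a function of $x$ are immediate from the expression \eqref{eq:rhof21}, since $l_\tau$, $k_\tau$, $\tau$ and $f$ are all continuous. Lemma \ref{lem:2.8}(i) rewritten reads
\begin{equation*}
\varPsi_\tau(f)(x) - f(x) = g(\sigma_A(x)) - g(x) = (-g)(x) - (-g)(\sigma_A(x)).
\end{equation*}
Thus $\varPsi_\tau(f) - f$ lies in the subgroup $\{h - h\circ\sigma_A \mid h \in C(X_A,\Z)\}$, which is exactly the relation defining $H^A$. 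Hence $[\varPsi_\tau(f)] = [f]$ in $H^A$.

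For part (ii), observe from \eqref{eq:rhof21} that the assignment $f \mapsto \rho^f$ is manifestly $\Z$-linear in $f$. Combined with Lemma \ref{lem:2.8}(ii), this gives
\begin{equation*}
\rho^f - \rho^{f\circ\sigma_A} = \rho^{f - f\circ\sigma_A} = \delta_f \in B^1(\Gamma_A, X_A; \Z).
\end{equation*}
Passing to the quotient $H^1(\Gamma_A,X_A;\Z) = Z^1/B^1$ yields $[\rho^f] = [\rho^{f\circ\sigma_A}]$.

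There is no genuine obstacle here; the corollary is essentially a repackaging of Lemma \ref{lem:2.8} into the language of the two quotient groups $H^A$ and $H^1(\Gamma_A,X_A;\Z)$. The only point that might warrant a sentence in the written proof is confirming that $x \mapsto \rho^f(x,\tau)$ is genuinely continuous and integer-valued, which is immediate from the cylinder-set continuity of $l_\tau, k_\tau$ and the expression \eqref{eq:rhof21}.
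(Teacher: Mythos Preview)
Your proof is correct and follows essentially the same approach as the paper: for (i) you set $g(x)=\rho^f(x,\tau)$ (the paper calls this $\varphi_\tau(f)$) and read off from Lemma~\ref{lem:2.8}(i) that $f-\varPsi_\tau(f)$ is a $\sigma_A$-coboundary, and for (ii) you invoke Lemma~\ref{lem:2.8}(ii) together with linearity of $f\mapsto\rho^f$, exactly as the paper does. The only difference is that you spell out the continuity of $g$ and the linearity step, which the paper leaves implicit.
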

\begin{proof}
(i)
Put
$\varphi_\tau(f)(x) = \rho^f(x,\tau)$ for $(x, \tau) \in X_A\times \Gamma_A$.
The above lemma says that 
\begin{equation*}
f - \varPsi_\tau(f) = \varphi_\tau(f) - \varphi_\tau(f) \circ\sigma_A
\end{equation*}
so that we have
$[\varPsi_\tau(f)] = [f]$ in $H^A$.

(ii) The assertion follows directly from Lemma \ref{lem:2.8} (ii).
\end{proof}
\begin{proposition}
The homomorphism
\begin{equation}
\rho: \quad f \in C(X_A,\Z) \longrightarrow 
\rho^f \in Z^1(\Gamma_A,X_A;\Z) \label{eq:rhoh}
\end{equation}
extends to an injective  homomorphism
\begin{equation*}
\rho_H: \quad [f] \in H^A
\longrightarrow [\rho^f] \in H^1(\Gamma_A,X_A;\Z)
\end{equation*}
of groups such that 
$\rho_H([1_{X_A}]) = [d^A]$.
\end{proposition}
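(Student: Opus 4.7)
The plan is to prove the four statements in sequence: that $\rho$ is a group homomorphism from $C(X_A,\Z)$ to $Z^1(\Gamma_A,X_A;\Z)$, that it descends to the quotient $H^A$, that the descended map $\rho_H$ is injective, and that it sends $[1_{X_A}]$ to $[d^A]$.

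First, additivity of $\rho$ is immediate from formula \eqref{eq:rhof21}, namely $\rho^f(x,\tau) = f^{l_\tau(x)}(x) - f^{k_\tau(x)}(\tau(x))$, since $f\mapsto f^m$ is linear for each fixed $m$. Combined with Lemma \ref{lem:rhoftau21}, this shows $\rho$ is a group homomorphism into $Z^1(\Gamma_A,X_A;\Z)$. To see that $\rho$ descends to a homomorphism $\rho_H$ on $H^A$, I would invoke Lemma \ref{lem:2.8}(ii), which states $\rho^{g-g\circ\sigma_A} = \delta_g \in B^1(\Gamma_A,X_A;\Z)$; thus every coboundary in $C(X_A,\Z)$ is killed in $H^1(\Gamma_A,X_A;\Z)$, and $\rho_H\colon [f]\mapsto [\rho^f]$ is well defined.

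The main content is the injectivity of $\rho_H$, where the approach mimics the proof of Lemma \ref{eq:rhofzero}. Suppose $[\rho^f] = 0$ in $H^1(\Gamma_A,X_A;\Z)$, so that $\rho^f = \delta_g$ for some $g \in C(X_A,\Z)$. For each $\mu = (\mu_1,\mu_2) \in B_2(X_A)$, by \cite[Lemma 3.2]{MaPacific} there is $\tau_\mu \in \Gamma_A$ that acts as $\sigma_A$ on the cylinder $U_\mu$ with $k_{\tau_\mu}(x) = 0$ and $l_{\tau_\mu}(x) = 1$ for $x \in U_\mu$. Then for $x \in U_\mu$,
\begin{equation*}
\rho^f(x,\tau_\mu) = f^1(x) - f^0(\sigma_A(x)) = f(x),
\end{equation*}
while $\delta_g(x,\tau_\mu) = g(x) - g(\sigma_A(x))$. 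The hypothesis $\rho^f = \delta_g$ therefore forces $f(x) = g(x) - g(\sigma_A(x))$ on $U_\mu$. Since $\{U_\mu : \mu \in B_2(X_A)\}$ is a finite cover of $X_A$, this identity holds globally, giving $f = g - g\circ\sigma_A$ and hence $[f] = 0$ in $H^A$, as required.

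Finally, the computation $\rho^{1_{X_A}}(x,\tau) = 1^{l_\tau(x)}(x) - 1^{k_\tau(x)}(\tau(x)) = l_\tau(x) - k_\tau(x) = d^A(x,\tau)$ is immediate from the definitions, yielding $\rho_H([1_{X_A}]) = [d^A]$. The only step requiring real input is injectivity, and there the abundance of local shift-implementing elements $\tau_\mu$ in $\Gamma_A$ handles it cleanly; all other parts are formal consequences of the preceding lemmas.
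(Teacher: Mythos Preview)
Your proof is correct and follows essentially the same approach as the paper. The only minor difference is in the injectivity step: the paper rewrites $\delta_g$ as $\rho^{g-g\circ\sigma_A}$ via Lemma~\ref{lem:2.8}(ii) and then invokes Lemma~\ref{eq:rhofzero} to conclude $f = g - g\circ\sigma_A$, whereas you evaluate $\rho^f = \delta_g$ directly at the pairs $(x,\tau_\mu)$, which amounts to inlining the proof of Lemma~\ref{eq:rhofzero} and skipping the intermediate rewriting; the underlying mechanism is identical.
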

\begin{proof}
For $f \in C(X_A,\Z)$ and $(x,\tau) \in X_A\times\Gamma_A$, 
Lemma \ref{lem:2.8} (ii) tells us that 
$$
\rho^{f - f \circ \sigma_A}(x,\tau) 
= \rho^f(x,\tau) - \rho^{f\circ\sigma_A}(x,\tau) =\delta_f(x,\tau).
$$
As $\delta_f $ is a coboundary, the homomorphism \eqref{eq:rhoh}
is well-defined.

We will next show that 
$\rho_H: \, [f] \in H^A
\longrightarrow [\rho^f] \in H^1(\Gamma_A,X_A;\Z)
$
is injective.
Suppose that 
$[\rho^f] =0$ in $ H^1(\Gamma_A,X_A;\Z)$.
Take $g \in C(X_A, \Z)$ such that 
$\rho^f(x,\tau) = g(x) - g(\tau(x))$ 
for all $(x,\tau) \in X_A\times\Gamma_A$.
By Lemma \ref{lem:2.8} (ii), we have
$$
\rho^g(x,\tau) - \rho^{g\circ\sigma_A}(x,\tau) = g(x) - g(\tau(x)) =\rho^f(x,\tau)
$$
so that
$\rho^{f - (g - g\circ\sigma_A)}(x,\tau)=0$.
By Lemma \ref{eq:rhofzero},
we have $f \equiv g- g\circ\sigma_A$.
\end{proof}

%We will provide the following two sections 
%to show Theorem \ref{thm:main1}.

%%%%%%%%%%%%%%%%%%%%%%%%%%%%%%%%%%%%%%%%%%%%%%%%%%%%%
%%%%%%%%%%%%%%%%%%%%%%%%%%%%%%%%%%%%%%%%%%%%%%%%%%%%
\section{Continuous orbit equivalence and cohomology groups}
%%%%%%%%%%%%%%%%%%%%%%%%%%%%%%%%%%%%%%%%%%%%%%%%%%%%%
%%%%%%%%%%%%%%%%%%%%%%%%%%%%%%%%%%%%%%%%%%%%%%
Throughout this section,
we assume that
$(X_A, \sigma_A)$ and $(X_B,\sigma_B)$ 
are continuously orbit equivalent
via a homeomorphism $h:X_A\longrightarrow X_B$
with cocycle functions 
$c_1 = l_1 - k_1$ 
and
$c_2= l_2 - k_2$. 
Define $\xi_h: \Homeo(X_A) \longrightarrow \Homeo(X_B)$
by setting
$\xi_h(\tau) = h \circ\tau\circ h^{-1}, \, \tau\in \Homeo(X_A)$.  
It has been proved that 
$ \xi_h(\Gamma_A) = \Gamma_B$ (\cite[Proposition 5.4]{MaPacific}).
In the proof of \cite[Proposition 5.4]{MaPacific}, 
we actually showed the following lemma.
\begin{lemma}\label{lem:htauh}
For $\tau \in \Gamma_A$, put
$n = l_\tau(x), m = k_\tau(x)$. We then have
$$
l_{\xi_h(\tau)} (h(x)) = k_1^m(\tau(x)) + l_1^n(x),
\qquad
k_{\xi_h(\tau)} (h(x)) = l_1^m(\tau(x)) + k_1^n(x),
\qquad 
x \in X_A.
$$
Hence we have
\begin{equation}\label{eq:3.1}
d_{\xi_h(\tau)} (h(x)) = c_1^n(x) - c_1^m(\tau(x)), \qquad x \in X_A.
\end{equation}
\end{lemma}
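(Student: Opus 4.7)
}
The plan is to derive everything from a single iterated form of the orbit-equivalence identity \eqref{eq:orbiteq1x}, combined with the defining relation \eqref{eq:tau} for $\tau\in\Gamma_A$.

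First, I would prove by induction on $n\in\Zp$ the iterated identity
\begin{equation*}
\sigma_B^{k_1^n(x)}\bigl(h(\sigma_A^n(x))\bigr) = \sigma_B^{l_1^n(x)}(h(x)), \qquad x \in X_A,
\end{equation*}
where $k_1^n, l_1^n$ are the Birkhoff-type sums defined by $f^n(x)=\sum_{i=0}^{n-1}f(\sigma_A^i(x))$. The base case $n=1$ is \eqref{eq:orbiteq1x}. For the inductive step, one applies the $n=1$ identity at the point $\sigma_A^n(x)$, then applies suitable powers of $\sigma_B$ to both sides and combines with the inductive hypothesis; the additivity law \eqref{eq:fmn} produces the exponents $k_1^{n+1}(x)$ and $l_1^{n+1}(x)$. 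This is the routine but essential building block.

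Next, set $n=l_\tau(x)$ and $m=k_\tau(x)$. By \eqref{eq:tau} we have $\sigma_A^m(\tau(x))=\sigma_A^n(x)$, so the iterated identity yields two equations whose left-hand sides share $h(\sigma_A^n(x))$:
\begin{align*}
\sigma_B^{k_1^n(x)}\bigl(h(\sigma_A^n(x))\bigr) &= \sigma_B^{l_1^n(x)}(h(x)), \\
\sigma_B^{k_1^m(\tau(x))}\bigl(h(\sigma_A^n(x))\bigr) &= \sigma_B^{l_1^m(\tau(x))}\bigl(h(\tau(x))\bigr).
\end{align*}
Applying $\sigma_B^{k_1^m(\tau(x))}$ to the first and $\sigma_B^{k_1^n(x)}$ to the second, the common middle term $h(\sigma_A^n(x))$ is eliminated and one obtains
\begin{equation*}
\sigma_B^{\,l_1^n(x)+k_1^m(\tau(x))}(h(x))
= \sigma_B^{\,l_1^m(\tau(x))+k_1^n(x)}\bigl(h(\tau(x))\bigr).
\end{equation*}
Since $\xi_h(\tau)(h(x))=h(\tau(x))$, comparing with the defining relation \eqref{eq:tau} for $\xi_h(\tau)\in\Gamma_B$ at the point $h(x)$ identifies
\begin{equation*}
l_{\xi_h(\tau)}(h(x)) = k_1^m(\tau(x))+l_1^n(x), \qquad
k_{\xi_h(\tau)}(h(x)) = l_1^m(\tau(x))+k_1^n(x),
\end{equation*}
as asserted. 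Since $d_{\xi_h(\tau)}$ is independent of the particular choice of $l$-- and $k$--functions realising \eqref{eq:tau} (\cite[Lemma 7.6]{MMGGD}), subtracting the two formulas gives
\begin{equation*}
d_{\xi_h(\tau)}(h(x)) = \bigl(l_1^n(x)-k_1^n(x)\bigr) - \bigl(l_1^m(\tau(x))-k_1^m(\tau(x))\bigr) = c_1^n(x) - c_1^m(\tau(x)),
\end{equation*}
which is \eqref{eq:3.1}. The only genuine obstacle is the bookkeeping of $\sigma_B$-exponents in the induction; once the iterated identity is in hand, the rest is a one-line cancellation.
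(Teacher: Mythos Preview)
Your proposal is correct and follows the same route as the argument in \cite[Proposition 5.4]{MaPacific} that the paper cites in lieu of a proof: the iterated identity you prove by induction is exactly \cite[Lemma 5.1]{MaPacific} (used later in the paper as well), and the elimination step is the content of that proposition. One minor remark: the displayed formulas for $l_{\xi_h(\tau)}$ and $k_{\xi_h(\tau)}$ should be read as ``one may take these as valid $l$- and $k$-functions for $\xi_h(\tau)$'', not as a uniqueness claim, and you implicitly use this when invoking \cite[Lemma 7.6]{MMGGD} for $d_{\xi_h(\tau)}$; you might also note in passing that the resulting functions are continuous and $\Zp$-valued since $k_\tau, l_\tau, k_1, l_1$ are.
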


For a one-cocycle
$\rho: X_A \times \Gamma_A \longrightarrow \Z$,
define 
$\varPhi_h(\rho): X_B \times \Gamma_B \longrightarrow \Z$
by setting
\begin{equation*}
\varPhi_h(\rho)(y,\varphi) = 
\rho(h^{-1}(y), \xi_{h^{-1}}(\varphi) )\in \Z,
\qquad (y,\varphi) \in X_B\times \Gamma_B.
\end{equation*}
Recall that 
 a homomorphism
$\varPsi_{h^{-1}}: C(X_A,\Z) \rightarrow C(X_B,\Z)$
is defined in \eqref{eq:Psihf}
for $h^{-1}: X_B \rightarrow X_A$.
We provide a lemma.
\begin{lemma}\label{lem:coecoh}
Keep the above notation. 
We have
\begin{equation}
\varPhi_h(\rho^f)= \rho^{\varPsi_{h^{-1}}(f)} \qquad \text{ for } f \in C(X_A,\Z).
\end{equation}
\end{lemma}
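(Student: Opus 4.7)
Fix $(y, \varphi) \in X_B \times \Gamma_B$ and write $x = h^{-1}(y)$, $\tau = \xi_{h^{-1}}(\varphi) = h^{-1}\circ\varphi\circ h$, $p = l_\varphi(y)$, $q = k_\varphi(y)$. Since $\tau(x) = h^{-1}(\varphi(y))$, the desired identity reduces via \eqref{eq:rhof21} to
$$f^{l_\tau(x)}(x) - f^{k_\tau(x)}(\tau(x)) = \varPsi_{h^{-1}}(f)^p(y) - \varPsi_{h^{-1}}(f)^q(\varphi(y)).$$
The plan is to expand each side into partial $f$-sums along $\sigma_A$-orbits and check that they coincide, with the common point $h^{-1}(\sigma_B^p(y)) = h^{-1}(\sigma_B^q(\varphi(y)))$ (the equality being the $\varphi$-cocycle relation) serving as the anchor for the bookkeeping.

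The crucial computation is the telescoping identity
$$\varPsi_{h^{-1}}(f)^p(y) = f^{l_2^p(y)}(h^{-1}(y)) - f^{k_2^p(y)}(h^{-1}(\sigma_B^p(y))), \qquad p \in \Zp,$$
proved by induction on $p$ using \eqref{eq:fmn} together with the iterated orbit-equivalence $\sigma_A^{l_2^p(y)}(h^{-1}(y)) = \sigma_A^{k_2^p(y)}(h^{-1}(\sigma_B^p(y)))$; the latter follows by a short induction from \eqref{eq:orbiteq2y}. Applying this telescoping identity at $(y,p)$ and at $(\varphi(y), q)$, and using $\sigma_B^p(y) = \sigma_B^q(\varphi(y))$, puts the right-hand side of the desired identity into a form depending only on four partial $f$-sums.

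For the left-hand side I would invoke the mirror image of Lemma \ref{lem:htauh} for $\xi_{h^{-1}}: \Gamma_B \to \Gamma_A$, obtained by interchanging the roles of $h$ and $h^{-1}$ (and of $(k_1, l_1)$ with $(k_2, l_2)$); this yields $l_\tau(x) = k_2^q(\varphi(y)) + l_2^p(y)$ and $k_\tau(x) = l_2^q(\varphi(y)) + k_2^p(y)$. Substituting into $f^{l_\tau(x)}(x)$ and $f^{k_\tau(x)}(\tau(x))$, splitting via \eqref{eq:fmn} at $l_2^p(y)$ and $l_2^q(\varphi(y))$ respectively, and then using the iterated orbit equivalence to rewrite the break-point base so that $\sigma_A^{l_2^p(y)}(x) = \sigma_A^{k_2^p(y)}(h^{-1}(\sigma_B^p(y)))$, recasts the left-hand side in the same four-term form.

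The principal obstacle is bookkeeping: one must match up the partial sums carefully, in particular recognizing that two ``long'' sums of length $k_2^p(y) + k_2^q(\varphi(y))$ based at the common point $h^{-1}(\sigma_B^p(y))$ must cancel between the $l_\tau$- and $k_\tau$-contributions. There is no conceptual difficulty; the computation is similar in spirit to that in the proof of Lemma \ref{lem:rhoftau21}, only performed across the two shift spaces via $h$.
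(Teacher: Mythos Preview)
Your proposal is correct and follows essentially the same approach as the paper's proof: both sides are expanded using the mirror of Lemma~\ref{lem:htauh} for $\xi_{h^{-1}}$ together with the telescoping identity $\varPsi_{h^{-1}}(f)^p(y) = f^{l_2^p(y)}(h^{-1}(y)) - f^{k_2^p(y)}(h^{-1}(\sigma_B^p(y)))$, and the verification reduces to the cancellation of two length-$(k_2^p(y)+k_2^q(\varphi(y)))$ partial sums anchored at the common point $h^{-1}(\sigma_B^p(y)) = h^{-1}(\sigma_B^q(\varphi(y)))$. The only cosmetic difference is that the paper cites the telescoping identity from \cite[Lemma 4.5]{MaPAMS2016} rather than sketching its inductive proof.
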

\begin{proof}
For $(y,\varphi) \in X_B\times\Gamma_B$,
put $n =k_\varphi(y), m= l_\varphi(y)$.
We have by using Lemma \ref{lem:htauh} 
\begin{align*}
\varPhi_h(\rho^f)(y, \varphi)
%= & \rho^f(h^{-1}(y), \xi_{h^{-1}}(\varphi)) \\
= & f^{l_{\xi_{h^{-1}}(\varphi)}(h^{-1}(y))}(h^{-1}(y))
   -f^{k_{\xi_{h^{-1}}(\varphi)}(h^{-1}(y))}(\xi_{h^{-1}}(\varphi)(y)) \\
= & f^{k_2^n(\varphi(y)) + l_2^m(y)}(h^{-1}(y))
   -f^{k_2^m(y) + l_2^n(\varphi(y))}(h^{-1}(\varphi(y))) \\
= & f^{l_2^m(y)}(h^{-1}(y)) + f^{k_2^n(\varphi(y))}(\sigma_A^{l_2^m(y)}(h^{-1}(y))) \\
  &- \{ f^{ l_2^n(\varphi(y))}(h^{-1}(\varphi(y))) 
       + f^{k_2^m(y)}(\sigma_A^{ l_2^n(\varphi(y))}(h^{-1}(\varphi(y)))) \} \\
= & f^{l_2^m(y)}(h^{-1}(y))+f^{k_2^n(\varphi(y))}(\sigma_A^{k_2^m(y)}(h^{-1}(\sigma_B^m(y)))) \\
   & - \{ f^{ l_2^n(\varphi(y))}(h^{-1}(\varphi(y))) 
       + f^{k_2^m(y)}(\sigma_A^{ k_2^n(\varphi(y))}(h^{-1}(\sigma_B^n(\varphi(y))))) \}.
 \end{align*}
On the other hand, by using \cite[Lemma 4.5]{MaPAMS2016}, we have
\begin{align*}
\rho^{\varPsi_{h^{-1}}(f)}(y,\varphi)
=& \varPsi_{h^{-1}}(f)^{m}(y) - \varPsi_{h^{-1}}(f)^{n}(\varphi(y)) \\ 
=& f^{l_2^m(y)}(h^{-1}(y)) - f^{k_2^m(y)}(h^{-1}(\sigma_B^m(y))) \\
 &-\{ f^{l_2^n(\varphi(y))}(h^{-1}(\varphi(y))) 
- f^{k_2^n(\varphi(y))}(h^{-1}(\sigma_B^n(\varphi(y))))\}.  
 \end{align*}
As
$\sigma_B^m(y) = \sigma_B^n(\varphi(y))$,
we see that
\begin{align*}
 & \{ f^{k_2^n(\varphi(y))}(\sigma_A^{k_2^m(y)}(h^{-1}(\sigma_B^m(y)))) 
      + f^{k_2^m(y)}(h^{-1}(\sigma_B^m(y))) \} \\
  &- \{ f^{k_2^m(y)}(\sigma_A^{ k_2^n(\varphi(y))}(h^{-1}(\sigma_B^n(\varphi(y))))) 
   + f^{k_2^n(\varphi(y))}(h^{-1}(\sigma_B^n(\varphi(y)))) \}   \\
= &  f^{k_2^m(y)+ k_2^n(\varphi(y))}(h^{-1}(\sigma_B^m(y)))  
- f^{k_2^m(y) + k_2^n(\varphi(y))}(h^{-1}(\sigma_B^n(\varphi(y))))
=0 
\end{align*}
%Hence 
%\begin{align*}
%  & \varPhi_h(\rho^f)(y, \varphi)
%- \rho^{\varPsi_{h^{-1}}(f)}(y,\varphi) \\
%= & f^{k_2^n(\varphi(y))}(\sigma_A^{k_2^m(y)}(h^{-1}(\sigma_B^m(y))) )
%- f^{k_2^m(y)}(\sigma_A^{ k_2^n(\varphi(y))}(h^{-1}(\sigma_B^n(\varphi(y))))) \\
%& + f^{k_2^m(y)}(h^{-1}(\sigma_B^m(y)))
%   - f^{k_2^n(\varphi(y))}(h^{-1}(\sigma_B^n(\varphi(y)))) \\
%= & \{ f^{k_2^n(\varphi(y))}(\sigma_A^{k_2^m(y)}(h^{-1}(\sigma_B^m(y)))) 
%      + f^{k_2^m(y)}(h^{-1}(\sigma_B^m(y))) \} \\
%  &- \{ f^{k_2^m(y)}(\sigma_A^{ k_2^n(\varphi(y))}(h^{-1}(\sigma_B^n(\varphi(y))))) 
%   + f^{k_2^n(\varphi(y))}(h^{-1}(\sigma_B^n(\varphi(y)))) \}   \\
%= &  f^{k_2^m(y)+ k_2^n(\varphi(y))}(h^{-1}(\sigma_B^m(y)))  
%- f^{k_2^m(y) + k_2^n(\varphi(y))}(h^{-1}(\sigma_B^n(\varphi(y)))). 
%\end{align*}
%As$\sigma_B^m(y) = \sigma_B^n(\varphi(y))$,
so that we conclude that
$
\varPhi_h(\rho^f)(y, \varphi)
= \rho^{\varPsi_{h^{-1}}(f)}(y,\varphi). \hspace{4cm} \qed
$ 
\renewcommand{\qed}{}
\end{proof}
We will show the following proposition.
\begin{proposition}\label{prop:COECOHOM}
\hspace{6cm}
\begin{enumerate}
\renewcommand{\theenumi}{\roman{enumi}}
\renewcommand{\labelenumi}{\textup{(\theenumi)}}
\item
If $(X_A,\sigma_A)$ and 
$(X_B,\sigma_B)$
are continuously orbit equivalent,
then 
there exists an isomorphism
$
\varPhi_h:H^1(\Gamma_A,X_A; \Z)
\longrightarrow
H^1(\Gamma_B,X_B; \Z)$
of ordered groups
such that 
$
\varPhi_h(\rho^f) = \rho^{\varPsi_{h^{-1}}(f)}, f \in C(X_A,\Z).
$
\item
If in particular 
$(X_A,\sigma_A)$ and 
$(X_B,\sigma_B)$
are strongly continuous orbit equivalent,
the isomorphism 
$\varPhi_h: H^1(\Gamma_A,X_A;{\Bbb Z}) \longrightarrow H^1(\Gamma_B,X_B;{\Bbb Z})$
satisfies 
$\varPhi_h([d^A]) = [d^B].$
\end{enumerate}
\end{proposition}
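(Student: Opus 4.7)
For part (i), the plan is to verify that the pullback $\varPhi_h$ defined by $\varPhi_h(\rho)(y,\varphi) = \rho(h^{-1}(y),\xi_{h^{-1}}(\varphi))$ restricts to an ordered group isomorphism at the level of cohomology. The verification splits into four routine items. First, the cocycle identity pulls back correctly because $\xi_{h^{-1}}:\Gamma_B\to\Gamma_A$ is a group homomorphism and $\xi_{h^{-1}}(\varphi_1)(h^{-1}(y)) = h^{-1}(\varphi_1(y))$, so the cocycle identity for $\rho$ evaluated at $(h^{-1}(y),\xi_{h^{-1}}(\varphi_1)\circ\xi_{h^{-1}}(\varphi_2))$ transports immediately to the cocycle identity for $\varPhi_h(\rho)$. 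Second, a direct computation yields $\varPhi_h(\delta_g) = \delta_{g\circ h^{-1}}$, so coboundaries pull back to coboundaries and $\varPhi_h$ descends to a homomorphism of $H^1$-groups. Third, positivity is immediate from the definition. Fourth, $\varPhi_{h^{-1}}$ provides a two-sided inverse by the same construction, so $\varPhi_h$ is an ordered group isomorphism. The required identity $\varPhi_h(\rho^f) = \rho^{\varPsi_{h^{-1}}(f)}$ for $f\in C(X_A,\Z)$ is precisely Lemma \ref{lem:coecoh}, so no further work is needed for (i).

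For part (ii), assuming strongly continuous orbit equivalence so that we may choose $c_2 = 1 - b_2 + b_2\circ\sigma_B$ for some $b_2\in C(X_B,\Z)$, the plan is a direct computation of $\varPhi_h(d^A)(y,\varphi)$. By the definition of $\varPhi_h$ this equals $d_{\xi_{h^{-1}}(\varphi)}(h^{-1}(y))$, and Lemma \ref{lem:htauh} applied in the reversed direction, interchanging $(A,B,h)$ with $(B,A,h^{-1})$, yields
\begin{equation*}
\varPhi_h(d^A)(y,\varphi) \,=\, c_2^n(y) - c_2^m(\varphi(y)), \qquad n = l_\varphi(y),\ m = k_\varphi(y).
\end{equation*}
The standard telescoping identity $c_2^n(y) = n - b_2(y) + b_2(\sigma_B^n(y))$, its analogue at $\varphi(y)$, and the orbit equation $\sigma_B^n(y) = \sigma_B^m(\varphi(y))$ then cancel the endpoint $b_2$-terms and leave
\begin{equation*}
\varPhi_h(d^A)(y,\varphi) \,=\, (n-m) - b_2(y) + b_2(\varphi(y)) \,=\, d^B(y,\varphi) - \delta_{b_2}(y,\varphi).
\end{equation*}
Since $\delta_{b_2}$ is a coboundary, this gives $\varPhi_h([d^A]) = [d^B]$ in $H^1(\Gamma_B,X_B;\Z)$.

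Neither part involves deep difficulty, but the main bookkeeping concern is the telescoping cancellation in (ii): one must check that only the $b_2$-values at the orbit endpoints $\sigma_B^n(y) = \sigma_B^m(\varphi(y))$ collapse, while those at the initial points $y$ and $\varphi(y)$ survive to assemble the coboundary $\delta_{b_2}$. Some care is also required in transporting Lemma \ref{lem:htauh} cleanly to the reversed direction, since the cocycle $c_2\in C(X_B,\Z)$ must be substituted for $c_1\in C(X_A,\Z)$ throughout. Both are clerical rather than conceptual obstacles, and once they are handled the proposition follows from Lemmas \ref{lem:htauh} and \ref{lem:coecoh} together with the pullback formalism.
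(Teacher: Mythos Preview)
Your proposal is correct and follows essentially the same approach as the paper: the four-item verification for (i) and the reversed Lemma~\ref{lem:htauh} plus telescoping for (ii) are exactly what the paper does. The only cosmetic differences are a sign convention on $b_2$ (the paper's proof takes $c_2 = 1 + b_2 - b_2\circ\sigma_B$, yielding $+\delta_{b_2}$ instead of your $-\delta_{b_2}$) and a swap of the labels $m,n$.
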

\begin{proof}
(i)
For $\varphi_1,\varphi_2 \in \Gamma_B$,
it follows that
\begin{align*}
\varPhi_h(\rho)(y,\varphi_2\circ \varphi_1)
& = \rho(h^{-1}(y), 
h^{-1}\circ \varphi_2\circ h\circ h^{-1}\varphi_1 \circ h ) \\
& = \rho(h^{-1}(y), h^{-1}\circ \varphi_1\circ h) 
+   \rho(h^{-1}\circ \varphi_1\circ h (h^{-1}(y)),h^{-1}\circ \varphi_2\circ h) \\
& =\Phi_h(\rho)(y,\varphi_1) + \Phi_h(\rho)(\varphi_1(y),\varphi_2)
\end{align*}
so that
$\varPhi_h(\rho)\in Z^1(\Gamma_B,X_B;\Z)$.
We also have for 
$g \in C(X_A, \Z)$,
\begin{equation*}
\varPhi_h(\delta_g)(y,\varphi)
 = \delta_g(h^{-1}(y), \xi_{h^{-1}}(\varphi) ) 
 = (g \circ h^{-1})(y) - (g \circ  h^{-1})( \varphi(y))
\end{equation*}
so that 
$\varPhi_h(\delta_g) = \delta_{g\circ h^{-1}}\in B^1(\Gamma_B,X_B;\Z)$.
Therefore we have a homomorphism
$\varPhi_h: Z^1(\Gamma_A,X_A;\Z) \longrightarrow Z^1(\Gamma_B,X_B;\Z)$
such that
$\varPhi_h(B^1(\Gamma_A,X_A; \Z) ) \subset B^1(\Gamma_B,X_B; \Z)$
and
$\varPhi_h(Z^1_+(\Gamma_A,X_A; \Z) ) \subset Z^1_+(\Gamma_B,X_B; \Z)$.
It induces a homomorphism
$ H^1(\Gamma_A,X_A;\Z) \longrightarrow H^1(\Gamma_B,X_B;\Z)$. 
It is also written  $\varPhi_h$
 and satisfies 
$\varPhi_h( H^1_+(\Gamma_A,X_A;\Z)) \subset H^1_+(\Gamma_B,X_B;\Z)$.
Similarly we 
$\varPhi_{h^{-1}}: H^1(\Gamma_B,X_B;\Z) \longrightarrow H^1(\Gamma_A,X_A;\Z)$
for $h^{-1}:X_B\longrightarrow X_A$.
It is direct to see that 
$\varPhi_{h^{-1}} = \varPhi_h^{-1}$.
Hence we have
 an order preserving isomorphism from
$H^1(\Gamma_A,X_A; \Z)$
to
$H^1(\Gamma_B,X_B; \Z)$.

(ii)
We further assume that a homeomorphism
$h:X_A \longrightarrow X_B$ 
 gives rise to a strongly continuous orbit equivalence between
$(X_A,\sigma_A)$ and 
$(X_B,\sigma_B)$.
%Let  $\varPhi_h: H^1(\Gamma_A,X_A; \Z) \longrightarrow H^1(\Gamma_B,X_B; \Z)$
%be the isomorphism defined above.
We will prove $\varPhi_h([d^A]) = [d^B]$.
For $(y,\varphi) \in X_B\times\Gamma_B$, by putting
$n = k_\varphi(y), m= l_\varphi(y)$, we have the equalities
\begin{equation*}
l_{\xi_{h^{-1}}(\varphi)}(h^{-1}(y))  = k_2^n(\varphi(y)) + l_2^m(y), \qquad
k_{\xi_{h^{-1}}(\varphi)}(h^{-1}(y))  = k_2^m(y) + l_2^n(\varphi(y)) 
\end{equation*}
by Lemma \ref{lem:htauh}.
It then follows that
\begin{align*}
\varPhi_h(d^A)(y,\varphi) 
=& d^A(h^{-1}(y), \xi_{h^{-1}}(\varphi) ) 
= d_{ \xi_{h^{-1}}(\varphi)}(h^{-1}(y)) \\
=& l_{\xi_{h^{-1}}(\varphi)}(h^{-1}(y)) -
   k_{\xi_{h^{-1}}(\varphi)}(h^{-1}(y))  
%=&  l_2^m(y) - k_2^m(y) -(  l_2^n(\varphi(y)) -k_2^n(\varphi(y))) \\ 
= c_2^m(y) - c_2^n(\varphi(y)). 
\end{align*}
Now 
$h:X_A \longrightarrow X_B$  gives rise to a strongly continuous orbit equivalence,
so that $c_2 = 1 + b_2 - b_2\circ \sigma_B$
for some $b_2 \in C(X_B,\Z)$.
Hence by \cite[Lemma 5.2]{MaJOT},
we have
\begin{align*}
c_2^m(y) - c_2^n(\varphi(y))
= & \{m + b_2(y) - b_2(\sigma_B^m(y)) \} -
\{n + b_2(\varphi(y)) - b_2(\sigma_B^n(\varphi(y))) \} \\
= & m -n  + b_2(y) -  b_2(\varphi(y)) +\{
b_2(\sigma_B^m(y))  - b_2(\sigma_B^n(\varphi(y))) \} \\
%=& l_\varphi(y) - k_\varphi(y) + \delta_{b_2}(y,\varphi) \\
%=& d^B_\varphi(y) + \delta_{b_2}(y,\varphi) \\
=& d^B(y,\varphi) + \delta_{b_2}(y,\varphi).  
\end{align*}
Consequently we see that
$
\varPhi_h(d^A)(y,\varphi) = d^B(y,\varphi) + \delta_{b_2}(y,\varphi)  
$
so that
$\varPhi_h([d^A]) = [d^B]$.
\end{proof}
Therefore we have
\begin{proposition} Keep the above notation.
\begin{enumerate}
\renewcommand{\theenumi}{\roman{enumi}}
\renewcommand{\labelenumi}{\textup{(\theenumi)}}
\item
If one-sided topological Markov shifts
$(X_A,\sigma_A)$ and 
$(X_B,\sigma_B)$
are continuously orbit equivalent,
then there exist order preserving isomorphisms
$\varPsi_{h^{-1}}: H^A \rightarrow H^B$ and 
$\varPhi_h: H^1(\Gamma_A,X_A; \Z) \rightarrow H^1(\Gamma_B,X_B;\Z)$
such that the diagram
$$
\begin{CD}
H^A @>\rho_H >> H^1(\Gamma_A,X_A;\Z)  \\
@V{\varPsi_{h^{-1}}}VV  @VV{\varPhi_{h}}V \\
H^B @> \rho_H >> H^1(\Gamma_B,X_B;\Z) 
\end{CD}
$$
commutes.
\item
If in particular 
$(X_A,\sigma_A)$ and 
$(X_B,\sigma_B)$
are strongly continuous orbit equivalent,
in the above diagram, we have
$\varPsi_{h^{-1}}([1_{X_A}]) = [1_{X_B}]$ and $\varPhi_h([d^A]) = [d^B].$
\end{enumerate}
\end{proposition}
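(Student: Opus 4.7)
The plan is to assemble pieces already established earlier in the paper; no new technical work is really needed, only careful bookkeeping. For part (i), the ordered-group isomorphism $\varPhi_h$ has just been produced in Proposition \ref{prop:COECOHOM} (i), and the ordered isomorphism $\varPsi_{h^{-1}} : H^A \to H^B$ is the inverse-direction analogue of the map $\varPsi_h : H^B \to H^A$ from \cite{MMETDS}, recalled around \eqref{eq:Psihf} in the introduction; applying the same construction to $h^{-1} : X_B \to X_A$ with cocycle functions $k_2, l_2$ produces the vertical arrow on the left.

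Commutativity of the diagram reduces to the identity $\varPhi_h([\rho^f]) = [\rho^{\varPsi_{h^{-1}}(f)}]$ for every $f \in C(X_A, \Z)$, and this follows at once from Lemma \ref{lem:coecoh}, which already established $\varPhi_h(\rho^f) = \rho^{\varPsi_{h^{-1}}(f)}$ at the cocycle level. Thus part (i) is nothing more than quoting Proposition \ref{prop:COECOHOM} (i) together with Lemma \ref{lem:coecoh}.

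For part (ii), the equality $\varPhi_h([d^A]) = [d^B]$ is precisely the content of Proposition \ref{prop:COECOHOM} (ii). To handle the remaining equality $\varPsi_{h^{-1}}([1_{X_A}]) = [1_{X_B}]$, I will specialize the formula $\varPsi_{h^{-1}}(f)^m(y) = f^{l_2^m(y)}(h^{-1}(y)) - f^{k_2^m(y)}(h^{-1}(\sigma_B^m(y)))$ from \cite[Lemma 4.5]{MaPAMS2016}, already invoked in the proof of Lemma \ref{lem:coecoh}, at $f \equiv 1_{X_A}$ and $m = 1$. This immediately gives $\varPsi_{h^{-1}}(1_{X_A})(y) = l_2(y) - k_2(y) = c_2(y)$. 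Under the assumption of strongly continuous orbit equivalence, one may take $c_2 = 1 - b_2 + b_2 \circ \sigma_B$ for some $b_2 \in C(X_B, \Z)$, whence $\varPsi_{h^{-1}}(1_{X_A}) - 1_{X_B} = b_2 \circ \sigma_B - b_2$ lies in the coboundary subgroup defining $H^B$, and the claim follows.

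The only mild obstacle is keeping the directions of $\varPsi_h$ and $\varPsi_{h^{-1}}$ straight, together with the sign and normalization conventions of the cocycle functions $c_1 = l_1 - k_1$ and $c_2 = l_2 - k_2$ relative to the defining formulas \eqref{eq:Psihf} and \eqref{eq:rhof2}. Once this is checked, both assertions are immediate consequences of results already in hand.
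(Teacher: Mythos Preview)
Your proposal is correct and mirrors the paper's own treatment: the paper introduces this proposition with the phrase ``Therefore we have'' immediately after Proposition~\ref{prop:COECOHOM}, indicating exactly the assembly you describe---$\varPhi_h$ from Proposition~\ref{prop:COECOHOM}(i), $\varPsi_{h^{-1}}$ from \cite{MMETDS}, commutativity from Lemma~\ref{lem:coecoh}, and part~(ii) from Proposition~\ref{prop:COECOHOM}(ii) together with the identity $\varPsi_{h^{-1}}(1_{X_A}) = c_2$ (which the paper itself uses later, e.g.\ in the proof of Proposition~\ref{prop:3.4}). Your explicit computation of $\varPsi_{h^{-1}}(1_{X_A}) = c_2$ via \cite[Lemma~4.5]{MaPAMS2016} is a bit more detailed than what the paper spells out, but the argument is the same.
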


We next prove the converse of Proposition \ref{prop:COECOHOM} (ii)
in the followng proposition.
\begin{proposition}\label{prop:scoecoho}
One-sided topological Markov shifts
$(X_A,\sigma_A)$ and 
$(X_B,\sigma_B)$
are strongly continuous orbit equivalent
if and only if
there exists a homeomorphism
$h:X_A\longrightarrow X_B$ such that 
$h\circ \Gamma_A\circ h^{-1} = \Gamma_B$ 
and the induced isomorphism 
$\varPhi_h: H^1(\Gamma_A,X_A;{\Bbb Z}) \longrightarrow H^1(\Gamma_B,X_B;{\Bbb Z})$
of the cohomology groups satisfies
$\varPhi_h([d^A]) = [d^B].$
\end{proposition}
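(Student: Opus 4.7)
The ``only if'' direction is precisely Proposition \ref{prop:COECOHOM}(ii), so only the converse requires a new argument.

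Suppose $h:X_A\to X_B$ is a homeomorphism with $\xi_h(\Gamma_A)=\Gamma_B$ and $\varPhi_h([d^A])=[d^B]$. My plan is first to verify that $h$ itself implements a continuous orbit equivalence (so that the cocycle functions $c_1$, $c_2$ exist and Proposition \ref{prop:COECOHOM} applies), and then to translate the cohomological hypothesis into the coboundary form of $c_1$ and $c_2$ required for strong continuous orbit equivalence.

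For the first step, I would invoke the local shift elements $\tau_\mu\in\Gamma_A$ of \cite[Lemma 3.2]{MaPacific} indexed by $\mu\in B_2(X_A)$: each $\tau_\mu$ agrees with $\sigma_A$ on $U_\mu$ with $(k_{\tau_\mu},l_{\tau_\mu})=(0,1)$ there. Since $\xi_h(\tau_\mu)\in\Gamma_B$, there are continuous $k,l:X_B\to\Zp$ satisfying \eqref{eq:tau} for $\xi_h(\tau_\mu)$; specialising $y=h(x)$ for $x\in U_\mu$ gives $\sigma_B^{k(h(x))}(h(\sigma_A(x)))=\sigma_B^{l(h(x))}(h(x))$. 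Because $\{U_\mu\}_{\mu\in B_2(X_A)}$ is a finite clopen partition of $X_A$, these local pieces paste into globally continuous $k_1,l_1:X_A\to\Zp$ realising \eqref{eq:orbiteq1x}; the symmetric construction for $h^{-1}$ furnishes $k_2,l_2$ realising \eqref{eq:orbiteq2y}. Thus $h$ is a continuous orbit equivalence with cocycle functions $c_1=l_1-k_1$ and $c_2=l_2-k_2$, and the isomorphism $\varPhi_h$ of Proposition \ref{prop:COECOHOM}(i) is well defined.

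For the second step, I would apply the commutative diagram of the final proposition of Section 3, namely $\varPhi_h\circ\rho_H=\rho_H\circ\varPsi_{h^{-1}}$. Since $\rho_H([1_{X_A}])=[d^A]$ and $\rho_H([1_{X_B}])=[d^B]$, the hypothesis $\varPhi_h([d^A])=[d^B]$ together with the injectivity of $\rho_H$ (established at the end of Section 2) forces $\varPsi_{h^{-1}}([1_{X_A}])=[1_{X_B}]$ in $H^B$. A direct evaluation in \eqref{eq:Psihf} gives $\varPsi_{h^{-1}}(1_{X_A})(y)=(l_2(y)+1)-(k_2(y)+1)=c_2(y)$, so $[c_2]=[1_{X_B}]$ in $H^B$; writing $c_2-1=g-g\circ\sigma_B$ with $g\in C(X_B,\Z)$ and setting $b_2=-g$ yields $c_2=1-b_2+b_2\circ\sigma_B$. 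Running the identical argument with $h$ and $h^{-1}$ interchanged---using $\varPhi_{h^{-1}}=\varPhi_h^{-1}$ from Proposition \ref{prop:COECOHOM}(i)---produces $b_1\in C(X_A,\Z)$ with $c_1=1-b_1+b_1\circ\sigma_A$, which is exactly the definition of strong continuous orbit equivalence.

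The main obstacle is the first step: upgrading the purely group-theoretic hypothesis $\xi_h(\Gamma_A)=\Gamma_B$ to the dynamical statement that $h$ implements a continuous orbit equivalence, so that the cohomological machinery is available. The finite clopen partition $\{U_\mu\}$ makes the pasting routine, but identifying the local $(k,l)$ arising from each $\xi_h(\tau_\mu)$ with a globally continuous pair $(k_1,l_1)$ is the substantive dynamical content; once this bridge is in place, the remainder reduces to a formal diagram chase.
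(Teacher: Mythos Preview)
Your proposal is correct. The first step---upgrading $\xi_h(\Gamma_A)=\Gamma_B$ to a genuine continuous orbit equivalence via the local shift elements $\tau_\mu$---is exactly what the paper does (and is essentially the content of \cite[Proposition 5.3]{MaPacific}), so there is no obstacle there.

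The second step, however, differs from the paper's argument. The paper unpacks the hypothesis $\varPhi_h([d^A])=[d^B]$ directly at the cocycle level: writing $\varPhi_h(d^A)-d^B=\delta_{b_2}$ and setting $b_1=b_2\circ h$, one gets $d_\tau(x)-d_{\xi_h(\tau)}(h(x))=b_1(x)-b_1(\tau(x))$ for every $\tau\in\Gamma_A$. Specialising this identity to $\tau=\tau_\mu$ (where $d_{\tau_\mu}=1$ and $\tau_\mu=\sigma_A$ on $U_\mu$) simultaneously produces the cocycle functions $l_1^h,k_1^h$ \emph{and} the coboundary equation $l_1^h-k_1^h=1-b_1+b_1\circ\sigma_A$, so continuous orbit equivalence and its strong form emerge in one stroke. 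Your route is more modular: first build $c_1,c_2$, then invoke the commutative square $\varPhi_h\circ\rho_H=\rho_H\circ\varPsi_{h^{-1}}$ together with the injectivity of $\rho_H$ to descend the hypothesis to $[\varPsi_{h^{-1}}(1_{X_A})]=[1_{X_B}]$ in $H^B$, i.e.\ $[c_2]=[1_{X_B}]$. This is a clean diagram chase that makes transparent why the ordered cohomology $H^A$ mediates between $H^1(\Gamma_A,X_A;\Z)$ and the coboundary condition; the price is that it relies on the injectivity of $\rho_H$, which the paper's hands-on computation avoids.
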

\begin{proof}
It suffices to show the if part.
Assume that there exists a homeomorphism
$h:X_A\longrightarrow X_B$ such that 
$\xi_h(\Gamma_A) = \Gamma_B$ 
and the induced isomorphism 
$\varPhi_h: H^1(\Gamma_A,X_A;\Z) \longrightarrow H^1(\Gamma_B,X_B;\Z)$
of the cohomology groups satisfies
$\varPhi_h([d^A]) = [d^B].$
Hence we have
$\varPhi_h(d^A) - d^B \in B^1(\Gamma_B, X_B;\Z).$
Take
$b_2 \in C(X_B,\Z)$ such that 
\begin{equation}
\varPhi_h(d^A)(y,\varphi) - d^B(y,\varphi) = b_2(y) - b_2(\varphi(y)), \qquad
(y, \varphi) \in X_B\times \Gamma_B.
\label{eq:scoecoho1}
\end{equation}
By putting $ x = h^{-1}(y), \tau =\xi_{h^{-1}}(\varphi)$ and $ b_1(x) = b_2(h(x))$,
we have 
\begin{gather*}
\varPhi_h(d^A)(y,\varphi) = d^A(h^{-1}(y), h^{-1}\circ \varphi\circ h) = d^A(x,\tau) = d_\tau(x), 
\\
d^B(y,\varphi) = d^B(h(x),h\circ\tau\circ h^{-1}) =d_{\xi_h(\tau)}(h(x)),
\qquad 
b_1(\tau(x)) = b_2(\varphi(y)).
\end{gather*}
Hence the equality \eqref{eq:scoecoho1} implies
\begin{equation*}
d_\tau(x) - d_{\xi_h(\tau)}(h(x)) = b_1(x) - b_1(\tau(x)), \qquad x \in X_A, \tau\in \Gamma_A.
\end{equation*}
For $\mu \in B_2(X_A)$, 
take $\tau_\mu \in \Gamma_A$ as in \cite[Lemma 3.2]{MaPacific}.
Let $\{ \mu^{(1)}, \dots, \mu^{(M)} \}= B_2(X_A)$.
Put
$$
\tau_{(i)} = h\circ \tau_{\mu^{(i)}}\circ h^{-1} \in \Gamma_B,
\qquad i=1,\dots,M.
$$
Following the proof of \cite[Lemma 3.2]{MaPacific},
we have
$$
h(\sigma_A(x)) = h(\tau_{\mu^{(i)}}(x)) = \tau_{(i)}(h(x)), \qquad x \in U_{\mu^{(i)}}.
$$
Since
$
\tau_{(i)} \in \Gamma_B$,
one may find
$l_{\tau_{(i)}}, k_{\tau_{(i)}} \in C(X_B, \Zp)
$ such that 
$$
\sigma_B^{k_{\tau_{(i)}}(y)}(\tau_{(i)}(y)) = \sigma_B^{l_{\tau_{(i)}}(y)}(y),
\qquad y \in X_B.
$$
Define
$
k_1^h(x) = k_{\tau_{(i)}}(h(x)), \,
l_1^h(x) = l_{\tau_{(i)}}(h(x))
$
for
$
 x \in  U_{\mu^{(i)}}. 
$
For $y = h(x) \in h(U_{\mu^{(i)}})$, we then have
$$
\sigma_B^{k_1^h(x)}(h(\sigma_A(x))) 
= \sigma_B^{l_1^h(x)}(h(x)), \qquad x \in U_{\mu^{(i)}}.
$$
It then follows that for  
$ x \in  U_{\mu^{(i)}}$
\begin{align*}
l_1^h(x) - k_1^h(x)
%=& l_{h\circ \tau_{\mu^{(i)}}\circ h^{-1}}(h(x)) - k_{h\circ \tau_{\mu^{(i)}}\circ h^{-1}}(h(x)) \\  
=& d_{h\circ \tau_{\mu^{(i)}}\circ h^{-1}}(h(x))    
= d_{\tau_{\mu^{(i)}}}(x)  - b_1(x) + b_1(\tau_{\mu^{(i)}}(x)) \\
=& l_{\tau_{\mu^{(i)}}}(x) - k_{\tau_{\mu^{(i)}}}(h(x))  - b_1(x) + 
b_1(\tau_{\mu^{(i)}}(x)).  
\end{align*}
By the construction of $\tau_{\mu^{(i)}}$ as in \cite[Lemma 3.2]{MaPacific},
we know that $\tau_{\mu^{(i)}}(x) = \sigma_A(x)$
and 
$l_{\tau_{\mu^{(i)}}}(x)=1,
 k_{\tau_{\mu^{(i)}}}(x) =0$.
Therefore  we have
\begin{equation}
l_1^h(x) - k_1^h(x) = 1 - b_1(x) + b_1(\sigma_A(x)), \qquad x \in  U_{\mu^{(i)}}.
\label{eq:scoecoho2}
\end{equation} 
Hence the equality \eqref{eq:scoecoho2} holds for all $x \in X_A$,
showing that 
$(X_A,\sigma_A)$ and 
$(X_B,\sigma_B)$
are strongly continuous orbit equivalent.
\end{proof}

%%%%%%%%%%%%%%%%%%%%%%%%%%%%%%%%%%%
%%%%%%%%%%%%%%%%%%%%%%%%%%%%%%%%%%%%%%%%%%%%%%%%%%%%%%%%%
\section{Contnuous orbit equivalence and cocycle subgroup $\Gamma_{A,f}$}
%%%%%%%%%%%%%%%%%%%%%%%%%%%%%%%%%%%%%%%%%%%%%%%%%%%%%%%%%%%%
In this section, we will give the proof of  Theorem \ref{thm:main1}.
Before completeing  a proof of Theorem \ref{thm:main1},
we provide a lemma and a proposition in the following way.
\begin{lemma} \label{lem:taucommute}
%Assme that $A$ is irreducible and non-permutation.
If a homeomorphism $h$ on $X_A$ commetes with all elements of $\Gamma_A$,
then $h=\id$.
\end{lemma}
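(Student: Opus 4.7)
The plan is to argue by contradiction. Suppose $h\ne\id$ and pick $x\in X_A$ with $y:=h(x)\ne x$. It suffices to produce $\tau\in\Gamma_A$ with $\tau(x)=x$ and $\tau(y)\ne y$, for then the hypothesis $h\tau=\tau h$ forces
\[
y=h(x)=h(\tau(x))=\tau(h(x))=\tau(y),
\]
a contradiction.

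I will construct $\tau$ as an explicit cylinder swap inside $\Gamma_A$. Suppose we have admissible words $\alpha,\beta$ of $X_A$ with the same final letter (so $\alpha_{|\alpha|}=\beta_{|\beta|}$) satisfying $U_\alpha\cap U_\beta=\emptyset$, $y\in U_\alpha$ and $x\notin U_\alpha\cup U_\beta$. Define $\tau(z)=\beta\cdot\sigma_A^{|\alpha|}(z)$ for $z\in U_\alpha$, $\tau(z)=\alpha\cdot\sigma_A^{|\beta|}(z)$ for $z\in U_\beta$, and $\tau(z)=z$ otherwise. The condition $\alpha_{|\alpha|}=\beta_{|\beta|}$ makes the two concatenations admissible, so $\tau$ is a well-defined involutive self-homeomorphism of $X_A$. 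From $\sigma_A^{|\beta|}(\tau(z))=\sigma_A^{|\alpha|}(z)$ on $U_\alpha$, the symmetric identity on $U_\beta$, and $\tau=\id$ off $U_\alpha\cup U_\beta$, one reads off locally constant $k_\tau,l_\tau$ satisfying \eqref{eq:tau}, so $\tau\in\Gamma_A$. Finally, $x\notin U_\alpha\cup U_\beta$ yields $\tau(x)=x$, while $y\in U_\alpha$ sends $\tau(y)$ into $U_\beta$, a set disjoint from $U_\alpha\ni y$, forcing $\tau(y)\ne y$.

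The remaining step is to exhibit such $\alpha$ and $\beta$. I take $\alpha=(y_1,\dots,y_m)\in B_m(X_A)$ with $m$ chosen so that $(x_1,\dots,x_m)\ne(y_1,\dots,y_m)$, which automatically gives $y\in U_\alpha$ and $x\notin U_\alpha$. For $\beta$ I pick a state $s\ne y_1$ (available because $A$ being an irreducible non-permutation $\{0,1\}$-matrix forces $N\ge 2$) and, by irreducibility of $A$, connect $s$ to the terminal state $y_m$ by an admissible path. Adjusting its length using admissible loops based at $y_m$ (whose existence is again supplied by irreducibility) and its interior letters so that $\beta$ avoids matching the initial segment of $x$ (possible because $A$ being irreducible and non-permutation makes $X_A$ a Cantor set, so every cylinder branches further) yields $\beta$ meeting all three remaining requirements $\alpha_{|\alpha|}=\beta_{|\beta|}$, $U_\alpha\cap U_\beta=\emptyset$, and $x\notin U_\beta$.

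This combinatorial production of $\beta$ is the main obstacle; the verification that the resulting $\tau$ lies in $\Gamma_A$ and the derivation of the contradiction are then direct.
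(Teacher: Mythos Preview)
Your proof is correct and follows the same strategy as the paper's: both exhibit a $\tau\in\Gamma_A$ that fixes one of two distinct points and moves the other, the paper by invoking \cite[Lemma~2.1]{MaIsrael2015} for a local swap supported inside a cylinder disjoint from the fixed point, and you by constructing the cylinder exchange by hand. Your production of $\beta$ is slightly loose (appending loops at $y_m$ does nothing to prevent $\beta$ from being a prefix of $x$); a cleaner route is to choose any cylinder $U_\delta\subset X_A\setminus(U_\alpha\cup\{x\})$---this set is open and nonempty since $X_A$ is perfect and $x\notin U_\alpha$---and then extend $\delta$ by an admissible path to $y_m$ using irreducibility.
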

\begin{proof}
Suppose that $h \ne \id.$
Since $h:X_A\longrightarrow X_A$ is a nontrivial homeomorphism,
there exist cylinder sets $U_\mu, U_\nu \subset X_A$
such that 
\begin{equation*}
U_\mu \cap U_\nu = \emptyset, \qquad
h(U_\mu) \subset U_\nu, \qquad
U_\nu \backslash h(U_\mu) \ne \emptyset.
\end{equation*}
We define open sets $U = h(U_\mu), Y = U_\nu \backslash h(U_\mu)$
in $X_A$
and take $x \in U$.
By \cite[Lemma 2.1]{MaIsrael2015}, 
there exist a clopen set $V$ of $X_A$ and an element 
$\tau \in \Gamma_A$
such that 
\begin{equation*}
x \in V \subset U, \qquad 
\tau(V) \subset Y, \qquad 
\tau|_{(V\cup\tau(V))^c} =\id.
\end{equation*}
Now we have
$h^{-1}(x) \in U_\mu$ and
$V \cup \tau(V) \subset U_\nu$.
Since 
$U_\nu \cap U_\mu = \emptyset$,
we have
$
h^{-1}(x) \in (V\cup\tau(V))^c
$
so that $\tau(h^{-1}(x)) = h^{-1}(x)$, 
and hence  
\begin{equation*}
(h\circ \tau)(h^{-1}(x))= x \in V \subset h(U_\mu).
\end{equation*}
On the other hand,
$
(\tau\circ h)(h^{-1}(x))= \tau(x) \in \tau(V) \subset U_\nu\backslash h(U_\mu).
$
We thus conclude that 
\begin{equation*}
(h\circ \tau)(h^{-1}(x))\ne (\tau\circ h)(h^{-1}(x))
\end{equation*}
and hence 
$h\circ \tau \ne \tau\circ h$, a contradiction.
\end{proof}

\begin{proposition}[{\cite[Theorem 7.2]{MaIsrael2015}, cf. \cite{MatuiCrelle}}]
\label{prop:implement}
%Assume that both matrices $A$ and $B$ are irreducible and not permutations.
Suppose that there exists an isomorphism
$\xi: \Gamma_A\longrightarrow \Gamma_B$ 
of groups. Then there exists a unique homeomorphism
$h:X_A\longrightarrow X_B$ such that
\begin{equation}
\xi(\tau) = h\circ\tau\circ h^{-1}, \qquad \tau \in \Gamma_A. \label{eq:xihtau}
\end{equation}
\end{proposition}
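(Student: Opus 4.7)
The plan is to split into uniqueness, which is immediate from Lemma \ref{lem:taucommute}, and existence, which carries the substantive content. For uniqueness, if two homeomorphisms $h_1, h_2 : X_A \to X_B$ both implement $\xi$ as in \eqref{eq:xihtau}, then $g := h_2^{-1} \circ h_1 \in \Homeo(X_A)$ commutes with every element of $\Gamma_A$, so Lemma \ref{lem:taucommute} forces $g = \id$ and hence $h_1 = h_2$. For existence, my strategy is to recover the points of $X_A$ and $X_B$ from the abstract group structures of $\Gamma_A$ and $\Gamma_B$, in the spirit of the Rubin--Matui spatial realization approach of \cite{MaIsrael2015} and \cite{MatuiCrelle}.

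The relevant subgroups are, for each clopen $V \subset X_A$, the support subgroup
\[
\Gamma_A[V] := \{\tau \in \Gamma_A : \tau|_{X_A\setminus V} = \id \},
\]
and, for each $x \in X_A$, the germ subgroup $\Gamma_A^{(x)} := \bigcup \{\Gamma_A[V] : V \text{ clopen}, \, x \in V \}$. The required abundance of elements is provided by \cite[Lemma 2.1]{MaIsrael2015} (already invoked in the proof of Lemma \ref{lem:taucommute}) together with the cylinder homeomorphisms $\tau_\mu$ of \cite[Lemma 3.2]{MaPacific}. The key steps are then: (i) give a purely group-theoretic characterization of the support subgroups $\Gamma_A[V]$, e.g.\ via a maximality property coupled with the existence of a commuting complementary subgroup implementing disjoint support; (ii) deduce that $\xi$ permutes these support subgroups and so induces a Boolean isomorphism between the clopen lattices of $X_A$ and $X_B$; (iii) apply Stone duality to convert this Boolean isomorphism into a unique homeomorphism $h: X_A \longrightarrow X_B$; (iv) verify the identity $\xi(\tau) = h\circ\tau\circ h^{-1}$ pointwise by observing that conjugation by $\tau$ sends $\Gamma_A^{(x)}$ onto $\Gamma_A^{(\tau(x))}$, so that $\xi(\tau)$ is forced to carry $h(x)$ to $h(\tau(x))$.

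The main obstacle is step (i): producing an intrinsic, group-theoretic description of the clopen-support subgroups. This is the technical heart of any spatial realization theorem of this type, and in the present context is made possible by the locally supported involutions coming from the $\tau_\mu$ of \cite[Lemma 3.2]{MaPacific}, which yield commuting pairs with disjoint supports and, combined with \cite[Lemma 2.1]{MaIsrael2015}, provide enough flexibility to pin down $\Gamma_A[V]$ up to group-theoretic invariance. Once (i) is secured, continuity of $h$ follows because $h$ preserves the basis of clopen sets of the zero-dimensional compact Hausdorff spaces $X_A$ and $X_B$, and the conjugation identity falls out of the construction. In practice, since \cite[Theorem 7.2]{MaIsrael2015} already supplies exactly such a characterization for continuous full groups of irreducible shifts of finite type, the proof essentially consists of invoking that theorem, with the uniqueness clause being handled by the newly established Lemma \ref{lem:taucommute}.
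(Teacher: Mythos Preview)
Your proposal is correct and matches the paper's own proof: existence is obtained by citing \cite[Theorem 7.2]{MaIsrael2015}, and uniqueness follows immediately from Lemma \ref{lem:taucommute} applied to $h_2^{-1}\circ h_1$. Your additional sketch of the spatial realization argument behind the cited theorem goes beyond what the paper itself provides, but the overall structure is identical.
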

\begin{proof}
By \cite[Theorem 7.2]{MaIsrael2015}, we know that 
there exists a homeomorphism satisfying
\eqref{eq:xihtau}.
It suffices to show its uniqueness of $h$.
Suppose that there exist homeomorphisms
$h_i : X_A\longrightarrow X_B, i=1,2$ satisfying \eqref{eq:xihtau}.
Since the homeomorphism $h_2^{-1}\circ h_1$ on $X_A$ commutes with all elements of 
$\Gamma_A$,
we have $h_2 = h_1$ by Lemma \ref{lem:taucommute}.
\end{proof}
By the above proposition and the proof of \cite[Proposition 5.3]{MaPacific},
 an isomorphism
$\xi: \Gamma_A\longrightarrow \Gamma_B$
yields a unique homeomorphism $h: X_A\longrightarrow X_B$
giving rise to a continuous orbit equivalence between 
$(X_A, \sigma_A)$ and $(X_B,\sigma_B)$.
%the formulas \cite[(3.2)]{MaJOT}, \cite[(1.5)]{MaMZ},
The homeomorphism $h: X_A\longrightarrow X_B$
defines a homomorphism $\varPsi_h: C(X_B,\Z) \longrightarrow C(X_A,\Z)$ by
the formula \eqref{eq:Psihf}.
%\begin{equation}
%\varPsi_h(f)(x) =\sum_{i=0}^{l_1(x)} f(\sigma_B^i(h(x))) -
%\sum_{j=0}^{k_1(x)} f(\sigma_B^j(h(\sigma_A(x)))), \qquad x \in X_A 
%\end{equation}
%for $f \in C(X_B,\Z)$.

Recall that for a function $f \in C(X_A, \Z)$ 
the cocycle subgroup $\Gamma_{A,f}$ is defined by \eqref{eq:cocycle}.
The following proposition shows that 
$\Gamma_{A,f}$ is actually a group.
\begin{proposition}\label{prop:cocyclesubgroup}
$\Gamma_{A,f} $ is a subgroup of $\Gamma_A$.
\end{proposition}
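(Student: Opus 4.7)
The plan is to verify the three subgroup axioms using only the one-cocycle identity for $\rho^f$ established in Lemma \ref{lem:rhoftau21}, namely
\begin{equation*}
\rho^f(x,\tau_2\circ\tau_1) = \rho^f(x,\tau_1) + \rho^f(\tau_1(x),\tau_2), \qquad x\in X_A,\ \tau_1,\tau_2\in\Gamma_A.
\end{equation*}

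First I would check that $\id \in \Gamma_{A,f}$. Taking $\tau_1=\tau_2=\id$ in the cocycle identity gives $\rho^f(x,\id) = 2\rho^f(x,\id)$, so $\rho^f(x,\id)=0$ for every $x\in X_A$. (One can also see this directly from \eqref{eq:rhof21} using $l_\id=k_\id=0$.) Hence $\id \in \Gamma_{A,f}$.

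Next, closure under composition is immediate: if $\tau_1,\tau_2\in\Gamma_{A,f}$, then for every $x\in X_A$,
\begin{equation*}
\rho^f(x,\tau_2\circ\tau_1) = \rho^f(x,\tau_1) + \rho^f(\tau_1(x),\tau_2) = 0 + 0 = 0,
\end{equation*}
so $\tau_2\circ\tau_1\in\Gamma_{A,f}$.

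The only step requiring slight care is closure under inverses, but it too follows by applying the cocycle identity to $\tau_1=\tau$ and $\tau_2=\tau^{-1}$:
\begin{equation*}
0 = \rho^f(x,\id) = \rho^f(x,\tau^{-1}\circ\tau) = \rho^f(x,\tau) + \rho^f(\tau(x),\tau^{-1}) = \rho^f(\tau(x),\tau^{-1}),
\end{equation*}
since $\tau\in\Gamma_{A,f}$. Because $\tau$ is a homeomorphism of $X_A$, the point $\tau(x)$ runs over all of $X_A$ as $x$ does, and therefore $\rho^f(y,\tau^{-1})=0$ for every $y\in X_A$. Hence $\tau^{-1}\in\Gamma_{A,f}$, and $\Gamma_{A,f}$ is a subgroup of $\Gamma_A$. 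There is no real obstacle here; everything is driven by Lemma \ref{lem:rhoftau21}, which is the nontrivial input that was already established.
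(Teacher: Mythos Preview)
Your proof is correct and follows essentially the same route as the paper: both arguments rely entirely on the cocycle identity of Lemma~\ref{lem:rhoftau21}, using it directly for closure under composition and applying it to $\tau^{-1}\circ\tau=\id$ (together with $\rho^f(\cdot,\id)=0$) to obtain closure under inverses. Your version is slightly more explicit about the identity element and about $\tau(x)$ ranging over all of $X_A$, but there is no substantive difference.
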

\begin{proof}
For $\tau_1,\tau_2 \in \Gamma_A$, the formula in Lemma \ref{lem:rhoftau21}
%\begin{equation}
%\rho^f(x,\tau_2\circ \tau_1) =\rho^f(x, \tau_1) + \rho^f(\tau_1(x), \tau_2), \qquad x \in X_A 
%\label{eq: tau2tau1}
%\end{equation}
tells us that 
$\tau_1, \tau_2 \in \Gamma_{A,f}$ implies
$\tau_2\circ \tau_1 \in \Gamma_{A,f}$.
By Lemma \ref{lem:rhoftau21} again, 
we have
\begin{equation*}
0= \rho^f(x,\id) =\rho^f(x, \tau^{-1}\circ\tau) 
= \rho^f(x,\tau) + \rho^f(\tau(x), \tau^{-1}), \qquad x \in X_A 
\end{equation*}
so that 
$\rho^f(x,\tau^{-1}) = - \rho^f(\tau^{-1}(x), \tau)$.
Hence we have that  
$\tau\in \Gamma_{A,f}$ implies
$\tau^{-1} \in \Gamma_{A,f}$.
\end{proof}
For $\tau \in \Gamma_A$, we see that 
$\tau \in \Gamma_{A,f}$ if and only if 
$f^{k_{\tau}(x)}(\tau(x)) = f^{l_\tau(x)}(x), x \in X_A.$
By definition, we know that 
$\Gamma _{A, m f} = \Gamma_{A, f}$ for a nonzero integer $m \in \Z$.

\begin{proposition}\label{prop:xihAB}
Keep the above situation.
If there exists an isomorphism
$\xi: \Gamma_A\longrightarrow \Gamma_B$,
then we have
\begin{equation*}
\xi_h(\Gamma_{A, \varPsi_h(f)}) = \Gamma_{B,f}, \qquad f \in C(X_B,\Z).
\end{equation*}
\end{proposition}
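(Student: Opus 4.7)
The plan is to deduce the equality directly from Lemma \ref{lem:coecoh}, applied this time with the roles of $A$ and $B$ interchanged. By Proposition \ref{prop:implement} the isomorphism $\xi$ is of the form $\xi_h$ for a unique homeomorphism $h : X_A \to X_B$, and by the construction in \cite[Proposition 5.3]{MaPacific} (as noted in the paragraph preceding the proposition) this $h$ implements a continuous orbit equivalence; in particular so does $h^{-1} : X_B \to X_A$. Applying Lemma \ref{lem:coecoh} to $h^{-1}$ and an arbitrary $f \in C(X_B,\Z)$ therefore yields the pointwise identity
\begin{equation*}
\varPhi_{h^{-1}}(\rho^f) \;=\; \rho^{\varPsi_h(f)}
\end{equation*}
of one-cocycles in $Z^1(\Gamma_A, X_A; \Z)$.

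Next I would unwind the definition of $\varPhi_{h^{-1}}$: for any $\rho \in Z^1(\Gamma_B, X_B; \Z)$ and any $(x,\tau) \in X_A \times \Gamma_A$,
\begin{equation*}
\varPhi_{h^{-1}}(\rho)(x,\tau) \;=\; \rho\bigl(h(x),\, \xi_h(\tau)\bigr).
\end{equation*}
Combining this with the displayed identity produces the key relation
\begin{equation*}
\rho^{\varPsi_h(f)}(x,\tau) \;=\; \rho^f\bigl(h(x),\, \xi_h(\tau)\bigr) \qquad \text{for all } (x,\tau) \in X_A \times \Gamma_A.
\end{equation*}

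From this the conclusion is immediate: the condition $\tau \in \Gamma_{A,\varPsi_h(f)}$ means precisely that the left-hand side vanishes identically in $x$, and since $h$ is a bijection $X_A \to X_B$ this is equivalent to $\rho^f(y, \xi_h(\tau)) = 0$ for every $y \in X_B$, i.e. $\xi_h(\tau) \in \Gamma_{B,f}$. Because $\xi_h$ is a bijection $\Gamma_A \to \Gamma_B$, this is exactly $\xi_h(\Gamma_{A,\varPsi_h(f)}) = \Gamma_{B,f}$.

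There is essentially no obstacle once Lemma \ref{lem:coecoh} is in hand; the only subtleties are to invoke the lemma with $h^{-1}$ rather than $h$, so that the $\varPsi$ lands in $C(X_A,\Z)$ and the cocycle lives on $X_A \times \Gamma_A$, and to use the fact that Lemma \ref{lem:coecoh} gives a genuine pointwise equality of cocycles (not merely an equality of cohomology classes), which is what allows us to pass from vanishing of cocycles to membership in the cocycle subgroup.
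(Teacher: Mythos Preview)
Your proof is correct and follows essentially the same route as the paper's: apply Lemma~\ref{lem:coecoh} with $h^{-1}$ in place of $h$ to obtain the pointwise identity $\rho^{\varPsi_h(f)}(x,\tau)=\varPhi_{h^{-1}}(\rho^f)(x,\tau)=\rho^f(h(x),\xi_h(\tau))$, and then read off the equivalence $\tau\in\Gamma_{A,\varPsi_h(f)}\Longleftrightarrow\xi_h(\tau)\in\Gamma_{B,f}$. You are slightly more explicit than the paper about why the swap of $h$ and $h^{-1}$ is legitimate and about the bijectivity needed at the end, but the argument is the same.
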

\begin{proof}
By Lemma \ref{lem:coecoh},
 we have
\begin{equation*}
\rho^{\varPsi_h(f)}(x,\tau)
=\varPhi_{h^{-1}}(\rho^f)(x,\tau)
=\rho^f(h(x), \xi_h(\tau))
  \qquad \text{ for } f \in C(X_A,G)
\end{equation*}
Hence
$\rho^{\varPsi_h(f)}(x,\tau) =0$ for all $x \in X_A$
if and only if 
$\rho^f(y, \xi_h(\tau)) =0$ for all $y \in X_B.$
This shows that 
$\tau \in \Gamma_{A, \varPsi_h(f)}$
if and only if
$\xi_h(\tau)  \in \Gamma_{B,f}.$
\end{proof}

\medskip

Now we are in position to complete the proof of Theorem \ref{thm:main1}.

\medskip

 {\it Proof of Theorem \ref{thm:main1}:} 
 The assertion (i)   was proved in \cite[Theorem 3.2]{MaMZ}.
The assertion (ii) follows from  Proposition \ref{prop:xihAB}.
The assertion (iii) is proved in Proposition \ref{prop:COECOHOM} (i).

 We therefore completed the proof of Theorem \ref{thm:main1}. \qed

%%%%%%%%%%%%%%%%%%%%%%%%%%%%%%%%%%
%%%%%%%%%%%%%%%%%%%%%%%%%%%%%%%%%%%%%%%%%
\section{Strongly continuous orbit equivalence and coboundary subgroups $\Gamma_A^b$}
%%%%%%%%%%%%%%%%%%%%%%%%%%%%%%%%%%%%%%%%%%%%%%%%%%%%%%%%
In this section, we restrict our interest to strongly continuous orbit equivalence.
Recall that for a function $b \in C(X_A, \Z)$ 
the coboundary subgroup $\Gamma_A^b$ is defined by \eqref{eq:coboundarysub}.
\begin{lemma}\label{lem:1b}
For $b \in C(X_A,\Z)$, define $1_b \in C(X_A,\Z)$ 
by $1_b = 1 -b + b\circ\sigma_A$.
Then we have
$
\Gamma_A^b =\Gamma_{A,1_b}$
the cocycle subgroup for $1_b$.
\end{lemma}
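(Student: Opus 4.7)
The plan is to exploit the manifest $\Z$-linearity of the map $f \mapsto \rho^f$ together with Lemma \ref{lem:2.8}(ii). Since $f^m(x) = \sum_{i=0}^{m-1} f(\sigma_A^i(x))$ is $\Z$-linear in $f$, the formula $\rho^f(x,\tau) = f^{l_\tau(x)}(x) - f^{k_\tau(x)}(\tau(x))$ shows that for any $f_1, f_2 \in C(X_A,\Z)$ one has $\rho^{f_1 + f_2} = \rho^{f_1} + \rho^{f_2}$.

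Using this, I would decompose $1_b = 1_{X_A} - (b - b \circ \sigma_A)$ and write
\begin{equation*}
\rho^{1_b}(x,\tau) = \rho^{1_{X_A}}(x,\tau) - \rho^{b - b\circ\sigma_A}(x,\tau).
\end{equation*}
The first term equals $d^A(x,\tau) = d_\tau(x)$ by the definition $d^A := \rho^{1_{X_A}}$ given just after \eqref{eq:rhof21}, since $(1_{X_A})^m \equiv m$. For the second term, Lemma \ref{lem:2.8}(ii) says precisely that $\rho^{b - b\circ\sigma_A} = \delta_b$, so $\rho^{b-b\circ\sigma_A}(x,\tau) = b(x) - b(\tau(x))$. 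Combining these gives
\begin{equation*}
\rho^{1_b}(x,\tau) = d_\tau(x) - b(x) + b(\tau(x)), \qquad (x,\tau) \in X_A \times \Gamma_A.
\end{equation*}

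Finally, I would read off the equivalence: $\tau \in \Gamma_{A, 1_b}$ iff $\rho^{1_b}(x,\tau) = 0$ for all $x \in X_A$, which by the displayed identity is iff $d_\tau(x) = b(x) - b(\tau(x))$ for all $x \in X_A$; this is exactly the condition $\tau \in \Gamma_A^b$ from \eqref{eq:coboundarysub}. Hence $\Gamma_A^b = \Gamma_{A, 1_b}$.

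The argument is essentially a one-line computation once the linearity of $\rho^{(\cdot)}$ and Lemma \ref{lem:2.8}(ii) are in hand, so there is no substantive obstacle; the only thing to be careful about is that the Lemma is stated in the form $\rho^f - \rho^{f\circ\sigma_A} = \delta_f$, which by linearity is the same as $\rho^{f - f\circ\sigma_A} = \delta_f$. A useful corollary (not needed for the lemma itself but implicit in the statement) is that $\Gamma_A^b$ is automatically a subgroup of $\Gamma_A$, since $\Gamma_{A, 1_b}$ is one by Proposition \ref{prop:cocyclesubgroup}.
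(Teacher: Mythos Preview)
Your argument is correct and is essentially identical to the paper's own proof: both use the $\Z$-linearity of $f\mapsto\rho^f$, the decomposition $1_b = 1 - (b - b\circ\sigma_A)$, the identity $\rho^1=d_\tau$, and Lemma~\ref{lem:2.8}(ii) to conclude that $\rho^{1_b}(x,\tau)=d_\tau(x)-b(x)+b(\tau(x))$. Your closing remark that this exhibits $\Gamma_A^b$ as a subgroup via Proposition~\ref{prop:cocyclesubgroup} also matches the paper's comment immediately following the lemma.
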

\begin{proof}
We have the equalities
\begin{equation*}
\rho^{1 -(b - b\circ \sigma_A)}(x,\tau)
=\rho^1(x,\tau) -( \rho^{b}(x,\tau)- \rho^{b\circ\sigma_A}(x,\tau) ).
\end{equation*}
By Lemma \ref{lem:2.8} (ii) and the identity
$\rho^1(x,\tau) = d_\tau(x)$, we have 
$\rho^{1 -(b -b\circ\sigma_A)}(x,\tau)=0$
if and only if
$d_\tau(x) = b(x) - b(\tau(x))$.
Hence we have
$\tau \in
\Gamma_{A,1_b}
$ if and only if
$\tau \in \Gamma_A^b$.
\end{proof}
Since a cocycle subgroup is a subgroup of $\Gamma_A$,
the coboundary subgroup $\Gamma_A^b$ is also a subgroup of $\Gamma_A$.
If we take $b\equiv m$ a constant integer and hence $1_b \equiv 1$, 
we see that 
$\Gamma_{A,1} = \Gamma_A^m= \Gamma_A^{\AF}$:
the AF full group of $(X_A, \sigma_A)$.
By definition, we know that 
$\Gamma _A^{m +b} = \Gamma_A^b$ for a nonzero integer $m \in \Z$.

We will present an example of $\Gamma_A^b$ with nontrivial $b$.
\begin{example}
Let $A$ be an irreducible non permutation matrix with entries in $\{0,1\}$.
Take $\mu = ab \in B_2(X_A)$  with $a\ne b$.
Let us define a homeomorphism
$\tau_\mu $ on $X_A$ by 
\begin{equation*}
\tau_\mu(x) := 
\begin{cases}
\sigma_A(x) & \text{ for } x \in U_\mu, \\
a x & \text{ for } x \in U_b, \\
x & \text{ else},
\end{cases}
\end{equation*}
and continuous functions
\begin{equation*}
k_{\tau_\mu}(x) := 
\begin{cases}
0 & \text{ for } x \in U_\mu, \\
1 & \text{ for } x \in U_b, \\
0 & \text{ else},
\end{cases}
\qquad
l_{\tau_\mu}(x) := 
\begin{cases}
1 & \text{ for } x \in U_\mu, \\
0 & \text{ for } x \in U_b, \\
0 & \text{ else}.
\end{cases}
\end{equation*}
It is straightforward to see that 
$
\sigma_A^{k_{\tau_\mu}(x)}(\tau_\mu(x)) = \sigma_A^{l_{\tau_\mu}(x)}(x), x \in X_A,
$ 
so that $\tau_\mu \in \Gamma_A$.
Put $b_{\mu} = l_{\tau_\mu}$.
As $k_{\tau_\mu} = l_{\tau_\mu}\circ \tau_{\mu}$,  
%By putting
%\begin{equation*}
%b_{\tau_\mu}(x) := 
%\begin{cases}
%1 & \text{ for } x \in U_\mu, \\
%0 & \text{ for } x \in U_b, \\
%0 & \text{ else },
%\end{cases}
%\end{equation*}
we know  
$
d_{\tau_\mu} = b_{\mu} - b_{\mu}\circ \tau_\mu,
$
so that 
$\tau_\mu \in \Gamma_A^{b_\mu}.$
 \end{example}

Using Lemma \ref{lem:htauh}, we may show
 the following proposition.
\begin{proposition}\label{prop:3.2}
Suppose that 
$(X_A,\sigma_A) \underset{\scoe}{\sim}(X_B,\sigma_B)$
via a homeomorphism $h:X_A\longrightarrow X_B$
with coboundary maps $b_1 \in C(X_A,\Z)$ and
$b_2 \in C(X_B,\Z)$ such that
\begin{equation*}
c_1 = 1 + b_1 - b_1\circ\sigma_A \qquad 
c_2 = 1 + b_2 - b_2\circ \sigma_B.
\end{equation*} 
Then we have
\begin{enumerate}
\renewcommand{\theenumi}{\roman{enumi}}
\renewcommand{\labelenumi}{\textup{(\theenumi)}}
\item
$\xi_h(\Gamma_A^f) =\Gamma_B^{f\circ h^{-1} + b_1\circ h^{-1}}, \qquad f \in C(X_A,\Z),$
\item
$
\xi_{h^{-1}}(\Gamma_B^g) =\Gamma_A^{g\circ h + b_2\circ h}, \qquad g \in C(X_B,\Z).
$ 
\end{enumerate}  
%where $\ad(h)(\tau) = h \circ \tau \circ h^{-1}$ for $\tau\in \Gamma_A$ and
%$\ad(h^{-1})(\varphi) = h^{-1} \circ \varphi \circ h$ for $\varphi \in \Gamma_B$.
\end{proposition}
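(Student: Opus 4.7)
The plan is to reduce everything to the formula from Lemma \ref{lem:htauh} expressing $d_{\xi_h(\tau)}(h(x))$ as $c_1^n(x) - c_1^m(\tau(x))$, where $n = l_\tau(x)$ and $m = k_\tau(x)$, and then exploit the strong orbit equivalence hypothesis $c_1 = 1 + b_1 - b_1\circ\sigma_A$ to get a clean identity. The membership test for $\Gamma_A^f$ involves only $d_\tau$, so once the $c_1^n - c_1^m$ expression is simplified the coboundary comparison becomes a routine rewriting.

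First I would expand each Birkhoff sum: under the hypothesis, the telescoping identity
\begin{equation*}
c_1^n(x) = n + b_1(x) - b_1(\sigma_A^n(x))
\end{equation*}
holds, and similarly $c_1^m(\tau(x)) = m + b_1(\tau(x)) - b_1(\sigma_A^m(\tau(x)))$. The defining relation of $\tau \in \Gamma_A$ gives $\sigma_A^n(x) = \sigma_A^m(\tau(x))$, so the $b_1$ tail terms cancel on subtraction, yielding the key identity
\begin{equation*}
d_{\xi_h(\tau)}(h(x)) = d_\tau(x) + b_1(x) - b_1(\tau(x)).
\end{equation*}

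With this identity in hand, assertion (i) follows by direct translation: for $f \in C(X_A,\Z)$ and $g := f\circ h^{-1} + b_1\circ h^{-1} \in C(X_B,\Z)$, evaluate the membership condition $d_{\xi_h(\tau)}(y) = g(y) - g(\xi_h(\tau)(y))$ at $y = h(x)$ using $\xi_h(\tau)(h(x)) = h(\tau(x))$; the $b_1$-contributions on both sides match, and the condition collapses exactly to $d_\tau(x) = f(x) - f(\tau(x))$, i.e.\ $\tau \in \Gamma_A^f$. Assertion (ii) is obtained by the same argument with the roles of $A$ and $B$, of $h$ and $h^{-1}$, and of $(c_1,b_1)$ and $(c_2,b_2)$ interchanged, applying Lemma \ref{lem:htauh} to $\xi_{h^{-1}}$ instead. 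I do not anticipate a genuine obstacle; the only delicate point is to keep the telescoping expansion and the orbit relation $\sigma_A^n(x) = \sigma_A^m(\tau(x))$ carefully aligned so that the $b_1$ end-terms truly cancel, after which the subgroup identifications are purely formal.
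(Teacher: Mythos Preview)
Your argument is correct, and in fact slightly more direct than the paper's. Both proofs start from Lemma~\ref{lem:htauh} and the telescoping identity $c_1^n(x) = n + b_1(x) - b_1(\sigma_A^n(x))$, and both use the orbit relation $\sigma_A^{l_\tau(x)}(x) = \sigma_A^{k_\tau(x)}(\tau(x))$ to cancel the tail terms. The difference lies in how equality (rather than just one inclusion) is obtained.

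The paper assumes $\tau \in \Gamma_A^f$ from the outset, substitutes $n - m = d_\tau(x) = f(x) - f(\tau(x))$ immediately, and thereby only proves the inclusion $\xi_h(\Gamma_A^f) \subset \Gamma_B^{(f+b_1)\circ h^{-1}}$. It then proves the symmetric inclusion for $\xi_{h^{-1}}$, composes the two, and closes the loop by invoking \cite[Lemma 5.4]{MaJOT} to the effect that $b_1 + b_2\circ h$ is a constant $N_h$, whence $\Gamma_A^{f+N_h} = \Gamma_A^f$. Your approach instead records the identity $d_{\xi_h(\tau)}(h(x)) = d_\tau(x) + b_1(x) - b_1(\tau(x))$ for \emph{all} $\tau \in \Gamma_A$, and then reads off the membership equivalence $\tau \in \Gamma_A^f \Leftrightarrow \xi_h(\tau) \in \Gamma_B^{(f+b_1)\circ h^{-1}}$ directly. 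This yields equality in one step and avoids the appeal to \cite[Lemma 5.4]{MaJOT}; the paper's route, on the other hand, makes the role of the constant $N_h = b_1 + b_2\circ h$ explicit, which is thematically relevant elsewhere in the paper.
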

\begin{proof}
(i) 
By \cite[Lemma 5.2]{MaJOT}, we know the equality
\begin{equation}
c_1^n(x) = n + b_1(x) - b_1(\sigma_A^n(x)), \qquad x \in X_A, \, \, n \in \Zp.
\end{equation}
For $\tau \in \Gamma_A^f$, put $n =l_\tau(x), m = k_\tau(x), x \in X_A.$
By Lemma \ref{lem:htauh}, we have
\begin{align*}
d_{\xi_h(\tau)} (h(x)) 
=&  c_1^n(x) - c_1^m(\tau(x)) \\
= & \{n + b_1(x) - b_1(\sigma_A^n(x))\} - \{m + b_1(\tau(x)) - b_1(\sigma_A^m(\tau(x)))\} \\
= & n -m + b_1(x)  - b_1(\tau(x)) \\
= & f(x) - f(\tau(x)) + b_1(x) - b_1(\tau(x)) \\
= & (f + b_1)(x) - (f+ b_1)(\tau(x)), 
\end{align*}
so tha we have 
  $\xi_h(\tau) \in \Gamma_B^{f\circ h^{-1} + b_1\circ h^{-1}}$
  and hence
$\xi_h(\Gamma_A^f) \subset \Gamma_B^{f\circ h^{-1} + b_1\circ h^{-1}}$.
We similarly have
$\xi_{h^{-1}}(\Gamma_B^g) \subset \Gamma_A^{g\circ h + b_2\circ h}$.
We thus have the inclusion relations:
$$
\Gamma_A^f = \xi_{h^{-1}}(\xi_h(\Gamma_A^f) ) 
           \subset \xi_{h^{-1}}( \Gamma_B^{f\circ h^{-1} + b_1\circ h^{-1}})
           \subset \Gamma_A^{f + b_1 + b_2\circ h}
$$
By \cite[Lemma 5.4]{MaJOT}, the function $b_1 + b_2\circ h$
is a constant which we denote by $N_h$.
As 
$\Gamma_A^{f + N_h} = \Gamma_A^{f},
$
we conclude that  
$\xi_h(\Gamma_A^f) =\Gamma_B^{f\circ h^{-1} + b_1\circ h^{-1}}$
and 
$
\xi_{h^{-1}}(\Gamma_B^g) =\Gamma_A^{g\circ h + b_2\circ h}.
$ 
\end{proof}
The following lemma is needed to show 
 the converse of Proposition \ref{prop:3.2}.
A matrix $A=[A(i,j)]_{i,j=1}^N$ with $N\ge 2$
 is said to be primitive if there exists a positive integer $n$ such that 
$A^n(i,j)$ is positive for all $i,j =1,\dots,N$.
A primitive matrix is irreducible and not a permutation matrix.
\begin{lemma}\label{lem:3.5}
Assume that the matrix $A$ is primitive.
Suppose that there exist a homeomorphism $h:X_A\longrightarrow X_B$
that gives rise to
$(X_A,\sigma_A) \underset{\coe}{\sim}(X_B,\sigma_B)$
and a continuous function  $b_1 \in C(X_A,\Z)$ such that 
$
\xi_h (\Gamma_A^{\AF}) \subset \Gamma_B^{b_1\circ h^{-1}}.
$
Then the function $g_1(x) := c_1(x) - (b_1(x) - b_1(\sigma_A(x)) ), x \in X_A$
is a constant.
\end{lemma}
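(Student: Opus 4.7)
The plan is to reduce the hypothesis to an invariance statement for the Birkhoff sums $g_1^n$ along AF-full-group orbits, and then use primitivity of $A$ to promote this invariance to constancy of $g_1$.

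First I translate the hypothesis into a pointwise identity. For $\tau \in \Gamma_A^{\AF}$ with $l_\tau(x) = k_\tau(x) = n$, Lemma \ref{lem:htauh} gives $d_{\xi_h(\tau)}(h(x)) = c_1^n(x) - c_1^n(\tau(x))$, while $\xi_h(\tau) \in \Gamma_B^{b_1\circ h^{-1}}$ evaluated at $y = h(x)$ gives $d_{\xi_h(\tau)}(h(x)) = b_1(x) - b_1(\tau(x))$. The telescoping identity $g_1^n(x) = c_1^n(x) - b_1(x) + b_1(\sigma_A^n(x))$ combined with the AF condition $\sigma_A^n(\tau(x)) = \sigma_A^n(x)$ then forces $g_1^n(x) = g_1^n(\tau(x))$.

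Next I use the richness of $\Gamma_A^{\AF}$. For any $x, y \in X_A$ with $\sigma_A^n(x) = \sigma_A^n(y)$, the cylinder-swap exchanging the disjoint clopen sets $U_{(x_1,\dots,x_n)} \cap \sigma_A^{-n}(U_{x_{n+1}})$ and $U_{(y_1,\dots,y_n)} \cap \sigma_A^{-n}(U_{x_{n+1}})$, extended by the identity elsewhere, belongs to $\Gamma_A^{\AF}$ and sends $x$ to $y$ with $l_\tau(x) = k_\tau(x) = n$. Hence $g_1^n$ depends only on $\sigma_A^n(x)$, so $g_1^n = H_n \circ \sigma_A^n$ for some $H_n \in C(X_A,\Z)$. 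Combining $g_1^{n+1}(x) = g_1(x) + g_1^n(\sigma_A(x))$ with these factorisations gives $g_1 = (H_{n+1} - H_n) \circ \sigma_A^{n+1}$, so $g_1$ factors through $\sigma_A^n$ for every $n \ge 1$.

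Finally, since $g_1 \in C(X_A,\Z)$ is locally constant on compact $X_A$, there exists $K$ such that $g_1$ depends only on the first $K$ coordinates. Choose $M$ with $A^M$ strictly positive (by primitivity); irreducibility of $A$ ensures $A^{M+1}$ is also strictly positive. By the previous paragraph, $g_1$ moreover depends only on coordinates from position $K+M+1$ onward. For arbitrary $x, y \in X_A$, positivity of $A^{M+1}$ supplies an admissible bridging word $(w_1,\dots,w_M)$ with $A(x_K,w_1) = A(w_i,w_{i+1}) = A(w_M,y_{K+M+1}) = 1$, so the point $z = (x_1,\dots,x_K,w_1,\dots,w_M,y_{K+M+1},y_{K+M+2},\dots)$ lies in $X_A$ and satisfies $g_1(x) = g_1(z) = g_1(y)$. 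The main obstacle is the middle paragraph — deducing from AF invariance of $g_1^n$ that $g_1$ itself factors through arbitrarily high powers of $\sigma_A$; once this is in place, the bridging argument is routine, and it is precisely here that primitivity (rather than mere irreducibility) is used to keep the connecting word of bounded length.
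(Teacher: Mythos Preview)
Your argument is correct and takes a genuinely different route from the paper. Both begin by combining Lemma~\ref{lem:htauh} with the hypothesis $\xi_h(\tau)\in\Gamma_B^{b_1\circ h^{-1}}$ to obtain, for $\tau\in\Gamma_A^{\AF}$ with $l_\tau(x)=k_\tau(x)=n$, the identity $g_1^n(x)=g_1^n(\tau(x))$. From there the paper argues by contradiction: it restricts to non-eventually-periodic points, takes the minimal level $K_0$ at which an AF-related pair with $g_1(x)\neq g_1(\tau(x))$ can be synchronised, and uses the identity $g_1^{K_0}(x_0)=g_1^{K_0}(\tau_0(x_0))$ to locate a strictly smaller level in $\mathbb{K}$. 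Your argument is instead structural: from AF-invariance of $g_1^n$ you extract that $g_1$ factors through $\sigma_A^m$ for every $m\ge 1$, and then combine this tail-measurability with local constancy via a primitivity bridging word. Your route avoids the non-eventually-periodic hypothesis and the minimality bookkeeping, and it makes transparent exactly where primitivity enters (the existence of a bridging word of fixed length); the paper's descent argument is shorter on the page but hides this dependence inside the claim that one can find AF-related non-eventually-periodic points with distinct $g_1$-values. Both are valid; yours gives a cleaner explanation of \emph{why} the conclusion holds.
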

\begin{proof}
Suppose that $g_1$ is not constant.
A point $x \in X_A$ is said to be eventually periodic
if there exist
$r, s \in \N$ with $r>s $ satisfying 
 $\sigma_A^r(x) = \sigma_A^s(x)$.
As the matrix $A$ is primitive,  
the set $X_A^{\nep}$ of non eventually periodic points of $X_A$ 
is dense in $X_A$, 
and also we may find $z \in X_A^{\nep}$ and $\tau\in \Gamma_A^{\AF}$ such that 
$g_1(z) \ne g_1(\tau(z))$.
As we may choose  $k \in \N$ such that 
$\sigma_A^k(z) = \sigma_A^k(\tau(z)),$
the set 
\begin{equation*}
\mathbb{K} =
\{ k \in \N \mid \exists x \in X_A^{\nep}, \exists \tau \in \Gamma_A^{\AF} ;
g_1(x) \ne g_1(\tau(x)), \sigma_A^k(x) = \sigma_A^k(\tau(x))
\}
\end{equation*} 
is not empty.
We put
$K_0 = \min{\mathbb{K}}.$ 
Take $x_0 \in  X_A^{\nep}$ and
$\tau _0 \in \Gamma_A^{\AF}$ such that 
\begin{equation*}
g_1(x_0) \ne g_1(\tau_0(x_0)), \qquad 
\sigma_A^{K_0}(x) = \sigma_A^{K_0}(\tau_0(x_0)).
\end{equation*}
Since $h:X_A\longrightarrow X_B$ gives rise to a continuous orbit equivalence
between
$(X_A,\sigma_A)$ and $(X_B,\sigma_B)$,
we have by \cite[Lemma 5.1]{MaPacific},
\begin{equation*}
\sigma_B^{k_1^{K_0}(x_0)}(h(\sigma_A^{K_0}(x_0))) =
\sigma_B^{l_1^{K_0}(x_0)}(h(x_0)) 
\end{equation*}
and hence
\begin{equation*}
\sigma_B^{k_1^{K_0}(x_0)}(h(\sigma_A^{K_0}(\tau_0(x_0)))) =
\sigma_B^{l_1^{K_0}(x_0)}(h(x_0)), 
\end{equation*}
so that we have
\begin{equation*}
\sigma_B^{k_1^{K_0}(x_0) + k_1^{K_0}(\tau_0(x_0)) }(h(\sigma_A^{K_0}(\tau_0(x_0)))) =
\sigma_B^{l_1^{K_0}(x_0) + k_1^{K_0}(\tau_0(x_0))}(h(x_0)).  %\label{eq:3.5.2}
\end{equation*}
Hence we have
\begin{equation}
\sigma_B^{k_1^{K_0}(x_0) + l_1^{K_0}(\tau_0(x_0)) }(h(\tau_0(x_0))) =
\sigma_B^{l_1^{K_0}(x_0) + k_1^{K_0}(\tau_0(x_0))}(h(x_0)).  \label{eq:3.5.2}
\end{equation}
By the hypothesis with the condition $\tau_0 \in \Gamma_A^{\AF}$,
we see that 
$\xi_h(\tau_0)\in \Gamma_B^{b_1\circ h^{-1}}
$
so that  
\begin{equation}
\sigma_B^{k_{\xi_h(\tau_0)}(h(x_0)) }(h(\tau_0(x_0))) =
\sigma_B^{l_{\xi_h(\tau_0)}(h(x_0))}(h(x_0)) \label{eq:3.5.3}
\end{equation}
and
\begin{align}
l_{\xi_h(\tau_0)}(h(x_0)) -k_{\xi_h(\tau_0)}(h(x_0))
= & b_1 \circ h^{-1}(h(x_0)) -  b_1 \circ h^{-1}(\xi_h(\tau_0)(h(x_0)) )\\
= & b_1(x_0) -  b_1 (\tau_0(x_0)). \label{eq:3.5.4}
\end{align}
Since 
$d_{\xi_h(\tau_0)}(h(x_0)) 
=l_{\xi_h(\tau_0)}(h(x_0)) -k_{\xi_h(\tau_0)}(h(x_0))$,
the equalities
\eqref{eq:3.5.2},
\eqref{eq:3.5.3},
\eqref{eq:3.5.4}
imply 
\begin{equation*}
(l_1^{K_0}(x_0) + k_1^{K_0}(\tau_0(x_0)))
-
(k_1^{K_0}(x_0) + l_1^{K_0}(\tau_0(x_0)))
= b_1(x_0) -  b_1 (\tau_0(x_0))
\end{equation*}
so that 
\begin{equation}
\{ l_1^{K_0}(x_0) - k_1^{K_0}(x_0) \}
-
\{ l_1^{K_0}(\tau_0(x_0)) - k_1^{K_0}(\tau_0(x_0)) \}
= b_1(x_0) -  b_1 (\tau_0(x_0)). \label{eq:3.5.5}
\end{equation}
Hence we have
$$
c_1^{K_0}(x_0) -b_1(x_0) = c_1^{K_0}(\tau_0(x_0)) -b_1(\tau_0(x_0)) 
$$
and
\begin{equation}
c_1^{K_0}(x_0) -b_1(x_0) + b_1(\sigma_A^{K_0}(x_0))
= c_1^{K_0}(\tau_0(x_0)) -b_1(\tau_0(x_0)) 
+b_1(\sigma_A^{K_0}(\tau_0(x_0))). \label{eq:3.5.6}
\end{equation}
We then have
\begin{align*}
\text{LHS of } \eqref{eq:3.5.6}
=& \sum_{i=0}^{K_0-1}c_1(\sigma_A^i(x_0))
 -\{ \sum_{i=0}^{K_0-1}b_1(\sigma_A^i(x_0))
 + \sum_{i=0}^{K_0-1}b_1(\sigma_A^i(\sigma_A(x_0))\} \\
= & \sum_{i=0}^{K_0-1}g_1(\sigma_A^i(x_0)) = g_1^{K_0}(x_0)\\
\intertext{ and similarly}
\text{RHS of } \eqref{eq:3.5.6}
= & \sum_{i=0}^{K_0-1}g_1(\sigma_A^i(\tau_0(x_0))) 
= g_1^{K_0}(\tau_0(x_0)).\\
\end{align*}
%so  that 
%\begin{equation*}
%\sum_{i=0}^{K_0-1}g_1(\sigma_A^i(x_0)) =
%\sum_{i=0}^{K_0-1}g_1(\sigma_A^i(\tau_0(x_0))). %\label{eq:3.5.8}
%\end{equation*}
As $g_1(x_0) \ne g_1(\tau_0(x_0))$, 
%the equality \eqref{eq:3.5.8} tells us that 
there exists $m_0 \in \N$ with $1\le m_0\le K_0-1$ such that 
\begin{equation}
g_1(\sigma_A^{m_0}(x_0)) \ne g_1(\sigma_A^{m_0}(\tau_0(x_0))).
\label{eq:3.5.9}
\end{equation}
Put $x_1 =\sigma_A^{m_0}(x_0)$.
As $\sigma_A^{K_0 -m_0}(x_1) =\sigma_A^{K_0 -m_0}(\sigma_A^{m_0}(\tau_0(x_0)))$,
one may find $\tau_1 \in \Gamma_A^{\AF}$ such that 
\begin{equation}
\tau_1(x_1) =\sigma_A^{m_0}(\tau_0(x_0)). 
\label{eq:3.5.10}
\end{equation}
By \eqref{eq:3.5.9} and  \eqref{eq:3.5.10},
we have
\begin{equation*}
g_1(x_1) \ne g_1(\tau_1(x_1)) 
\quad
\text{ and }
\quad
\sigma_A^{K_0 -m_0}(x_1) =\sigma_A^{K_0 -m_0}(\tau_1(x_1)).
\end{equation*}
This implies $K_0 -m_0 \in \mathbb{K}$
a contradiction to the minimality of $K_0 =\min{\mathbb{K}}$.
%Therefore we conclude that $g_1$ is constant.
\end{proof}

\begin{proposition}\label{prop:3.4}
Assume that the matrices $A$ and $B$ are both primitive.
Suppose that there exist a homeomorphism $h:X_A\longrightarrow X_B$
that gives rise to
$(X_A,\sigma_A) \underset{\coe}{\sim}(X_B,\sigma_B)$
and  continuous functions $b_1 \in C(X_A,\Z), b_2 \in C(X_B,\Z)$ such that 
\begin{equation*}
\xi_h (\Gamma_A^{\AF}) = \Gamma_B^{b_1\circ h^{-1}}, \qquad
\xi_{h^{-1}} (\Gamma_B^{\AF}) = \Gamma_A^{b_2\circ h}.
\end{equation*}
Then the equalities 
\begin{equation}\label{eq:3.6.01}
c_1 = 1 + b_1 - b_1\circ\sigma_A, \qquad 
c_2 = 1 + b_2 - b_2\circ\sigma_B
\end{equation}
hold, so that 
$(X_A,\sigma_A) \underset{\scoe}{\sim}(X_B,\sigma_B)$.
\end{proposition}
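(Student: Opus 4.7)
The plan is to apply Lemma \ref{lem:3.5} in both directions to reduce $c_1$ and $c_2$ to integer constants modulo the prescribed coboundaries, then combine the two coe relations to force the product of the constants to be $1$, and finally use positivity in the ordered cohomology group to rule out the sign $-1$.

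First, applying Lemma \ref{lem:3.5} to the hypothesis $\xi_h(\Gamma_A^{\AF}) \subset \Gamma_B^{b_1\circ h^{-1}}$ (with $A$ primitive) shows that the function $c_1 - (b_1 - b_1\circ\sigma_A)$ is a constant $N_1 \in \Z$, so $c_1 = N_1 + b_1 - b_1\circ\sigma_A$. Swapping the roles of $A, B$ and $h, h^{-1}$ and applying the lemma to the symmetric hypothesis $\xi_{h^{-1}}(\Gamma_B^{\AF}) \subset \Gamma_A^{b_2\circ h}$ (with $B$ primitive) yields a constant $N_2 \in \Z$ with $c_2 = N_2 + b_2 - b_2\circ\sigma_B$. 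The remaining task is to show $N_1 = N_2 = 1$.

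Second, to extract the relation $N_1 N_2 = 1$, I iterate \eqref{eq:orbiteq2y} once with $w=h(x),\,n=l_1(x)$ and once with $w=h(\sigma_A(x)),\,n=k_1(x)$, and invoke \eqref{eq:orbiteq1x} to identify the common $h^{-1}$-argument produced on both sides. Equating $\sigma_A$-exponents on the dense set of non-eventually-periodic $x \in X_A$ (hence on all of $X_A$ by continuity) yields the identity
\begin{equation*}
c_2^{l_1(x)}(h(x)) - c_2^{k_1(x)}(h(\sigma_A(x))) = 1.
\end{equation*}
Expanding $c_2^m(w) = mN_2 + b_2(w) - b_2(\sigma_B^m(w))$ and using \eqref{eq:orbiteq1x} to cancel the inner $b_2$-terms reduces this to $c_1(x)N_2 + b_2(h(x)) - b_2(h(\sigma_A(x))) = 1$. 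Substituting $c_1 = N_1 + b_1 - b_1\circ\sigma_A$ and regrouping gives $F(x) - F(\sigma_A(x)) = 1 - N_1 N_2$ with $F := N_2 b_1 + b_2\circ h$; iterating along the $\sigma_A$-orbit yields $F(x) - F(\sigma_A^n(x)) = n(1 - N_1 N_2)$, and boundedness of $F$ on compact $X_A$ forces $N_1 N_2 = 1$.

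Third, to rule out $N_1 = N_2 = -1$, I invoke the ordered-isomorphism theorem of \cite{MMETDS}: continuous orbit equivalence induces an ordered group isomorphism $\varPsi_h : (H^B, H^B_+) \to (H^A, H^A_+)$. Since $\varPsi_h(1_{X_B}) = c_1$ and $[1_{X_B}] \in H^B_+ \setminus \{0\}$, we have $[c_1] \in H^A_+ \setminus \{0\}$; but $[c_1] = N_1[1_{X_A}]$ in $H^A$, and summing a representative $N_1 + f - f\circ\sigma_A \geq 0$ around a $\sigma_A$-periodic orbit of length $p$ (which exists by primitivity of $A$) gives $pN_1 \geq 0$, forcing $N_1 \geq 0$, while non-triviality of $[c_1]$ rules out $N_1 = 0$. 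Hence $N_1 \geq 1$, so combined with $N_1 N_2 = 1$ and $N_2 \in \Z$ we conclude $N_1 = N_2 = 1$, yielding \eqref{eq:3.6.01}. The technical crux is the iterated-coe identity in the second step, which requires tracking two composed applications of \eqref{eq:orbiteq2y} with distinct parameters and reconciling the nested exponents $l_2^n, k_2^n$ via \eqref{eq:orbiteq1x}; once that identity is secured, both the telescoping to $N_1 N_2 = 1$ and the cohomological sign argument are routine.
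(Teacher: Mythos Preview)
Your proof is correct and follows essentially the same route as the paper: apply Lemma~\ref{lem:3.5} twice to get constants $N_1,N_2$, use the iterated-coe identity (which the paper records as $\varPsi_h(c_2)=1_A$, i.e.\ equation~\eqref{eq:3.6.3}) together with the telescoped form of $c_2^m$ to force $N_1N_2=1$ via a bounded-coboundary argument, and then invoke $[c_1]\in H^A_+$ from \cite{MMETDS} to exclude $N_1=-1$. The only cosmetic differences are that you work on the $X_A$ side where the paper works on $X_B$, and you spell out the periodic-orbit positivity argument that the paper leaves implicit.
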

\begin{proof}
By Lemma \ref{lem:3.5}, the function 
$c_1 - (b_1 - b_1\circ\sigma_A)$
and the functin  
$c_2 - (b_2 - b_2\circ\sigma_B)$
are both constants.
They are denoted by $N_1$ and $ N_2,$
respectively. 
Hence we have  for $y = h(x) \in X_B$
\begin{align*}
N_1 \cdot l_2(y) 
=& \sum_{i=0}^{l_2(y) -1} c_1(\sigma_A^i(x)) 
- \sum_{i=0}^{l_2(y) -1} b_1(\sigma_A^i(x))  
+ \sum_{i=0}^{l_2(y) -1}b_1(\sigma_A^i(\sigma_A(x))) \\
= & c_1^{l_2(y)}(x) - b_1(x) + b_1(\sigma_A^{l_2(y)}(x))\\
\intertext{and similarly} 
N_1 \cdot k_2(y) 
=&   c_1^{k_2(y)}(h^{-1}(\sigma_B(y))) - b_1(h^{-1}(\sigma_B(y))) 
+ b_1(\sigma_A^{k_2(y)}(h^{-1}(\sigma_B(y)))),
\end{align*}
so that 
\begin{align}
c_1^{l_2(y)}(x) 
=&  N_1 \cdot l_2(y)  + b_1(x) - b_1(\sigma_A^{l_2(y)}(x)), \label{eq:3.6.1}\\
c_1^{k_2(y)}(h^{-1}(\sigma_B(y)))
= & N_1 \cdot k_2(y) +b_1(h^{-1}(\sigma_B(y))) -b_1(\sigma_A^{k_2(y)}(h^{-1}(\sigma_B(y)))).
\label{eq:3.6.2}
\end{align}
Since 
$(X_A,\sigma_A) \underset{\coe}{\sim}(X_B,\sigma_B)$
via homeomorphism $h:X_A\longrightarrow X_B$,
we have
\begin{equation*}
\varPsi_h(1_B) = c_1, \qquad
\varPsi_{h^{-1}}(1_A) = c_2
\end{equation*}
and hence  
$\varPsi_h(c_2) = 1_A, \, 
\varPsi_{h^{-1}}(c_1) = 1_B$.
Since
$$\varPsi_h(c_2)(x) = c_2^{l_1(x)}(h(x)) -  c_2^{k_1(x)}(h(\sigma_A(x))),
$$ 
we have 
\begin{equation}
c_2^{l_1(x)}(h(x)) -  c_2^{k_1(x)}(h(\sigma_A(x))) =1, \label{eq:3.6.3}
\end{equation}
and similarly
\begin{equation}
c_1^{l_2(y)}(h^{-1}(y)) -  c_1^{k_2(y)}(h^{-1}(\sigma_B(y))) =1. \label{eq:3.6.4}
\end{equation}
For $y = h(x)$, by \eqref{eq:3.6.1},\eqref{eq:3.6.2} and \eqref{eq:3.6.4},
we have
\begin{align*}
& \{N_1 \cdot l_2(y)  + b_1(x) - b_1(\sigma_A^{l_2(y)}(x))\} \\
- & \{ N_1 \cdot k_2(y) +b_1(h^{-1}(\sigma_B(y))) -b_1(\sigma_A^{k_2(y)}(h^{-1}(\sigma_B(y))))\} = 1
\end{align*}
so that 
\begin{equation*}
N_1 ( l_2(y) -  k_2(y)) + b_1(x) - b_1(h^{-1}(\sigma_B(y))) =1
\end{equation*}
and hence  
\begin{equation*}
N_1 \cdot c_2(y)  + b_1(h^{-1}(y)) - b_1(h^{-1}(\sigma_B(y))) =1, \qquad y \in X_B.
%\label{eq:3.6.5}
\end{equation*}
As $c_2(y) = N_2 + b_2(y) - b_2(\sigma_B(y))$,
we have
\begin{equation*}
N_1 (N_2 + b_2(y) - b_2(\sigma_B(y))  + b_1(h^{-1}(y)) - b_1(h^{-1}(\sigma_B(y))) =1, \qquad y \in X_B
%\label{eq:3.6.6}
\end{equation*}
so that 
 \begin{equation*}
N_1 N_2  -1 + N_1 b_2(y)   + b_1(h^{-1}(y)) 
= N_1 b_2(\sigma_B(y)) + b_1(h^{-1}(\sigma_B(y))) , \qquad y \in X_B.
\end{equation*}
We put $g(y) = N_1 b_2(y)   + b_1(h^{-1}(y)), y \in X_B$
and hence we get
\begin{equation}
N_1 N_2 -1 + g(y) = g(\sigma_B(y)), \qquad   y \in X_B
\label{eq:3.6.7}
\end{equation}
By \eqref{eq:3.6.7}, we have
\begin{equation}
(N_1 N_2 -1)\cdot n  =  g(\sigma_B^n(y)) -g(y), \qquad   y \in X_B, \, n \in \N.
\label{eq:3.6.8}
\end{equation}
Since the function $y \in X_B \longrightarrow g(y) \in \Z$ 
is continuous and hence bounded, 
the equalities \eqref{eq:3.6.8} do not hold unless $N_1 N_2 =1$.
The equality
$c_1(x) =N_1 - \{ b_1(x) - b_1(\sigma_A(x))\}, x \in X_A$
implies
$[c_1] =[N_1] $ in the ordered cohomology 
$H^A = C(X_A,\Z)/\{ f - f\circ\sigma_A\mid f \in C(X_A,\Z)\}$.
By a key ingredient of the paper \cite[Lemma 5.4]{MMETDS},
we know that $[c_1] = [\varPsi_h(1_B)]$ 
belongs to the positive cone $H^A_+$ of $H^A$.  
Hence we conclude that $N_1 =1$ and hence $N_2=1$.
We thus conclude that the equalities
\eqref{eq:3.6.01}  hold.
\end{proof}

\medskip

Now we will give a proof of Theorem \ref{thm:main2}.

\medskip

{\it Proof of Theorem \ref{thm:main2}:}

\noindent
The equivalence between (i) and (ii) was proved  in \cite[Theorem 6.7]{MaJOT}.

\noindent
(i) $\Longrightarrow$ (iv):
Assume that 
$(X_A,\sigma_A)$ and $(X_B,\sigma_B)$ are strongly continuous orbit equivalent.
Proposition \ref{prop:3.2} tells us that the assertion (iv) holds.

\noindent
(iv) $\Longrightarrow$ (iii):
Under the statement (iv), take the functions $f\equiv 0, g\equiv 0$, then
the statement (iii) follows.

\noindent
Assume the assertion (iii). 
Proposition \ref{prop:3.4} tells us that the assertion (i) holds.

\noindent
The equivalence between (i) and (v) is proved in Proposition \ref{prop:scoecoho}.

Therefore we completed the proof of Theorem \ref{thm:main2}.
%%%%%%%%%%%%%%%%%%%%%%%%%%%%%%%%%%%%%%

%%%%%%%%%%%%%%%%%%%%%%%%%%%%%%%%%%%%%%%%%%%%%%%%%%%%%%%%%
\section{$\Gamma$-strongly continuous orbit equivalence}
%%%%%%%%%%%%%%%%%%%%%%%%%%%%%%%%%%%%%%%%%%%%%%%%%%%%%%%%%
In this final section, we will study an  equivalence relation 
slightly stronger than strongly continuous orbit equivalence.  
Let $h:X_A\longrightarrow X_B$ 
be a homeomorphism that gives rise to a continuous orbit equivalence
between $(X_A,\sigma_A)$ and $(X_B,\sigma_B)$ as in 
\eqref{eq:orbiteq1x} and \eqref{eq:orbiteq2y}.
The cocycle functions are defined by 
$ c_1 = l_1 - k_1$ and $c_2 = l_2 - k_2$.
Let us denote by $1_A, 1_B$  the constant functions whose values are everywhere $1$ on $X_A, X_B$,
  respectively.
Recall that one-sided topological Markov shifts 
$(X_A,\sigma_A)$ and $(X_B,\sigma_B)$ are strongly continuous orbit equivalent
if there exists a continuous function $b_1:X_A\longrightarrow \Z$ such that
$c_1 = 1_A + b_1 - b_1\circ\sigma_A$.
In this case we may find a continuous function $b_2:X_B\longrightarrow \Z$ such that
$c_2 = 1_B + b_2 - b_2\circ\sigma_B$ (\cite[Lemma 4.3]{MaJOT}). 
\begin{definition}
One-sided topological Markov shifts $(X_A,\sigma_A)$ and $(X_B,\sigma_B)$ 
are said to be $\Gamma$-{\it strongly continuous orbit equivalent}\/ written
$ (X_A,\sigma_A)\underset{\Gamma-scoe}{\sim}(X_B,\sigma_B)$
if there exists a homeomorphism $h:X_A\longrightarrow X_B$ that gives rise to a 
continuous orbit equivalence between $(X_A,\sigma_A)$ and $(X_B,\sigma_B)$ and there exists 
an element $\tau \in \Gamma_A$ such that 
$c_1 = 1_A - d_\tau + d_\tau\circ\sigma_A$.
\end{definition}
The above definition means that by putting $b_1 = -d_\tau$, 
the equality $c_1 = 1_A + b_1 - b_1 \circ \sigma_A$
holds, so that $\Gamma$-strongly continuous orbit equivalence 
implies strongly continuous orbit equivalence.
Recall that $\xi_h(\tau) \in \Gamma_B$
is defined by $\xi_h(\tau) = h\circ \tau\circ h^{-1}$ for $\tau \in \Gamma_A$. 
\begin{lemma} Keep the above notation.
Suppose that $c_1 = 1_A - d_\tau + d_\tau \circ \sigma_A$
for some $\tau \in \Gamma_A$.
Then the equality
$c_2 = 1_B - d_{\xi_h(\tau^{-1})} + d_{\xi_h(\tau^{-1})}\circ \sigma_B$ 
holds.
\end{lemma}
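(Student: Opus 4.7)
The plan is to show that the hypothesis forces the existence of a coboundary $b_2$ on the $B$-side and then to identify it explicitly with $-d_{\xi_h(\tau^{-1})}$ up to a constant that will cancel. The hypothesis says that $(X_A,\sigma_A)$ and $(X_B,\sigma_B)$ are strongly continuous orbit equivalent via $h$ with coboundary $b_1 = -d_\tau$, so by \cite[Lemma 4.3]{MaJOT} there is some $b_2 \in C(X_B,\Z)$ with $c_2 = 1_B + b_2 - b_2\circ\sigma_B$. Moreover, by \cite[Lemma 5.4]{MaJOT} (already invoked in the proof of Proposition \ref{prop:3.2}) the function $b_1 + b_2\circ h$ is constant, equal to some $N_h \in \Z$, so $b_2\circ h = d_\tau + N_h$. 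The problem will therefore reduce to verifying the identity
\begin{equation*}
d_{\xi_h(\tau^{-1})}\circ h = -d_\tau,
\end{equation*}
because then $b_2 = -d_{\xi_h(\tau^{-1})} + N_h$, and substituting into $c_2 = 1_B + b_2 - b_2\circ\sigma_B$ yields the desired formula (the constant $N_h$ cancels).

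To establish this identity I would first derive a more general formula: for any $\tau' \in \Gamma_A$ and $x \in X_A$,
\begin{equation*}
d_{\xi_h(\tau')}(h(x)) = d_{\tau'}(x) - d_\tau(x) + d_\tau(\tau'(x)).
\end{equation*}
Lemma \ref{lem:htauh} gives $d_{\xi_h(\tau')}(h(x)) = c_1^n(x) - c_1^m(\tau'(x))$ with $n = l_{\tau'}(x)$, $m = k_{\tau'}(x)$; telescoping the hypothesis produces the identity $c_1^k(z) = k - d_\tau(z) + d_\tau(\sigma_A^k(z))$, and substituting yields
\begin{equation*}
d_{\xi_h(\tau')}(h(x)) = (n-m) - d_\tau(x) + d_\tau(\tau'(x)) + d_\tau(\sigma_A^n(x)) - d_\tau(\sigma_A^m(\tau'(x))).
\end{equation*}
The last two terms cancel because $\sigma_A^n(x) = \sigma_A^m(\tau'(x))$ by the defining property of $\tau' \in \Gamma_A$, and since $n-m = d_{\tau'}(x)$, the general formula falls out.

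Finally, I specialize to $\tau' = \tau^{-1}$. Lemma \ref{lem:2.1} applied with $\tau_1 = \tau^{-1}$, $\tau_2 = \tau$ yields $d_{\tau^{-1}} = -d_\tau \circ \tau^{-1}$, so the general formula collapses to
\begin{equation*}
d_{\xi_h(\tau^{-1})}\circ h = d_{\tau^{-1}} - d_\tau + d_\tau\circ\tau^{-1} = -d_\tau\circ\tau^{-1} - d_\tau + d_\tau\circ\tau^{-1} = -d_\tau,
\end{equation*}
which is what we need. The main subtlety is the cancellation of the $d_\tau$-terms along the shared orbit point $\sigma_A^n(x) = \sigma_A^m(\tau'(x))$ in the telescoping step; everything else is bookkeeping with Lemmas \ref{lem:htauh} and \ref{lem:2.1} together with the cited constant-sum property of $b_1 + b_2\circ h$.
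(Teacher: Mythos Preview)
Your proof is correct. Both your argument and the paper's hinge on the same key identity $d_{\xi_h(\tau^{-1})}\circ h = -d_\tau$ (equivalently $d_\tau\circ h^{-1} = -d_{\xi_h(\tau^{-1})}$), and both obtain it from the telescoping formula $c_1^k(z) = k - d_\tau(z) + d_\tau(\sigma_A^k(z))$ combined with Lemma~\ref{lem:htauh}; your general formula $d_{\xi_h(\tau')}(h(x)) = d_{\tau'}(x) - d_\tau(x) + d_\tau(\tau'(x))$ specialized to $\tau'=\tau$ is exactly the paper's intermediate step $d_{\xi_h(\tau)}(h(x)) = d_\tau(\tau(x))$.

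Where the two arguments diverge is in passing from this identity to the formula for $c_2$. The paper computes directly: it writes $c_2 = \Psi_{h^{-1}}(1_A) = \Psi_{h^{-1}}(c_1 + d_\tau - d_\tau\circ\sigma_A)$, uses $\Psi_{h^{-1}}(c_1)=1_B$, and invokes the coboundary-transfer formula $\Psi_{h^{-1}}(g - g\circ\sigma_A) = g\circ h^{-1} - g\circ h^{-1}\circ\sigma_B$ from \cite[Lemma~4.6]{MMETDS} to finish. You instead appeal to the abstract existence of some $b_2$ with $c_2 = 1_B + b_2 - b_2\circ\sigma_B$ (\cite[Lemma~4.3]{MaJOT}) and the constant-sum relation $b_1 + b_2\circ h \equiv N_h$ (\cite[Lemma~5.4]{MaJOT}) to identify $b_2$ with $-d_{\xi_h(\tau^{-1})}$ up to a constant that cancels. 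Your route leans on more of the MaJOT machinery but avoids the $\Psi$-calculus; the paper's route is a single linear computation once the key identity is in hand. Both are sound.
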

\begin{proof}
We first prove that the following identities 
\begin{equation} \label{eq: step 25}
d_\tau \circ h^{-1} = d_{\xi_h(\tau)}\circ \xi_h(\tau^{-1})= - d_{\xi_h(\tau^{-1})}
\end{equation}
hold.
%%%%%%%%%%%%%%%%%%%%%%%%%%%
%By Lemma \ref{lem:coecoh}, we know that 
%$\Phi_h(\rho^f) = \rho^{\Psi_{h^{-1}}}$ for $f \in C(X_A,\Z)$,
%so that 
%$$
%\Phi_h(\rho^f)(y,\varphi) =  \rho^f(h^{-1}(y), \xi_{h^{-1}}(\varphi)), \qquad (y,\varphi) \in X_B\times \Gamma_B,
%$$ and 
%$$  \rho^{\Psi_{h^{-1}}} (y,\varphi)=\rho^f(h^{-1}(y), \xi_{h^{-1}}(\varphi)).
%$$
%By substituting $h^{-1}, y, \varphi, f$ with
%$h, x, \tau, g$, respectively, we have 
%\begin{equation}\label{eq:step2}
%\rho^{\Psi_h(g)}(x,\tau) = \rho^g(h(x), \xi_h(\tau))\quad
%\text{ for } g \in C(X_B,\Z).
%\end{equation}
%In \eqref{eq:step2}, by putting $g\equiv 1_B$, we have 
%$$
%\rho^{\Psi_h(1_B)}(x,\tau) = \rho^1(h(x), \xi_h(\tau)).
%$$
% As  $\Psi_h(1_B) = c_1$, we have 
%$\rho^{c_1}(x,\tau) = \rho^1(h(x), \xi_h(\tau))$ so that 
%\begin{equation} \label{eq:step22}
%c_1^{l_\tau(x)}(x) - c_1^{k_\tau(x)}(\tau(x)) = d_{\xi_h(\tau)}(h(x)), \qquad x \in X_A.
%\end{equation}
By \cite[Lemma 5.2]{MaJOT}, we have
\begin{align*}
  & c_1^{l_\tau(x)}(x) - c_1^{k_\tau(x)}(\tau(x)) \\
= & \{ l_\tau(x) -d_\tau(x) + d_\tau(\sigma_A^{l_\tau(x)}(x))\}
-  \{ k_\tau(x) -d_\tau(\tau(x)) +d_\tau(\sigma_A^{k_\tau(x)}(\tau(x)))\} \\
= & d_\tau(\tau(x)). 
\end{align*}
By \eqref{eq:3.1}, we obtain 
\begin{equation} \label{eq:step23}
d_\tau(\tau(x)) = d_{\xi_h(\tau)}(h(x)), \qquad x \in X_A.
\end{equation}
By putting $y = h(\tau(x))$ in \eqref{eq:step23},
we have
\begin{equation*} \label{eq:step33}
d_\tau(h^{-1}(y)) 
= d_{\xi_h(\tau)}(h\circ \tau^{-1}\circ h^{-1}(y)))
= d_{\xi_h(\tau)}(\xi_h(\tau^{-1})(y)), \qquad y \in X_B.
\end{equation*}
We will next show the identity
\begin{equation*} \label{eq:step28}
d_{\xi_h(\tau)}\circ \xi_h(\tau^{-1}) = - d_{\xi_h(\tau^{-1})}.
\end{equation*}
Since $\xi_h(\tau)^{-1} = \xi_h(\tau^{-1})$, we have 
by using \cite[Lemma 7.7]{MMGGD}, 
\begin{equation*}
d_{\xi_h(\tau)}\circ \xi_h(\tau^{-1}) 
=d_{\xi_h(\tau)}\circ \xi_h(\tau)^{-1} 
=- d_{{\xi_h(\tau)}^{-1}} = - d_{\xi_h(\tau^{-1})}. 
\end{equation*}
Hence, the identities \eqref{eq: step 25} hold.
%\begin{equation} \label{eq: step 55}
%d_\tau \circ h^{-1} = - d_{\xi_h(\tau^{-1})}.
%\end{equation}

Since 
$c_1 = \Psi_h(1_B),c_2 = \Psi_{h^{-1}}(1_A)$ and
$\Psi_{h^{-1}} = \Psi_h^{-1}$
by \cite[Corolary 3.8]{MaJOT}, 
we have 
$$
c_2 
=  \Psi_{h^{-1}}(1_A) 
= \Psi_{h^{-1}}(c_1 +d_\tau - d_\tau\circ \sigma_A)
= \Psi_{h^{-1}}(c_1)+\Psi_{h^{-1}}(d_\tau - d_\tau\circ \sigma_A).
$$
By \cite[Lemma 4.6]{MMETDS},  the equality
$
\Psi_{h^{-1}}(d_\tau - d_\tau\circ \sigma_A) 
= d_\tau\circ h^{-1} - d_\tau \circ h^{-1} \circ \sigma_B$
holds so that
$$
c_2 
= 1_B + d_\tau \circ h^{-1} - d_\tau\circ h^{-1}\circ \sigma_B.
$$
We thus have the desired equality by 
\eqref{eq: step 25}.
%\begin{align*}
%c_2 
%= & 1_B + d_\tau \circ h^{-1} - d_\tau\circ h^{-1}\circ \sigma_B \\
%= & 1_B  - d_{\xi_h(\tau^{-1})} + - d_{\xi_h(\tau^{-1})}\circ \sigma_B \\
%= &1B + b_2 - b_2 \circ \sigma_B.
%\end{align*} 
\end{proof}

Therefore we have the following proposition.
\begin{proposition}
The following are equivalent.
\begin{enumerate}
\renewcommand{\theenumi}{\roman{enumi}}
\renewcommand{\labelenumi}{\textup{(\theenumi)}}
\item $ (X_A,\sigma_A)\underset{\Gamma-scoe}{\sim}(X_B,\sigma_B)$.
\item 
There exists a homeomorphism $h:X_A\longrightarrow X_B$ that gives rise to a 
continuous orbit equivalence between $(X_A,\sigma_A)$ and $(X_B,\sigma_B)$
with cocycle functions $c_1 = l_1 - k_1 \in C(X_A, \Z)$
and $c_2 = l_2 - k_2 \in C(X_B,\Z)$ such that  
 there exist elements of their continuous  full groups 
 $\tau_1 \in \Gamma_A, \tau_2 \in \Gamma_B$
   such that $\tau_2 = h \circ \tau_1^{-1}\circ h^{-1}$
   and
\begin{equation*}
c_1 = 1_A - d_{\tau_1} +  d_{\tau_1} \circ\sigma_A, \qquad 
c_2 = 1_B - d_{\tau_2} +  d_{\tau_2} \circ\sigma_B.
\end{equation*}
\end{enumerate}
\end{proposition}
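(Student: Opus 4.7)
The plan is to obtain this proposition as an essentially immediate packaging of the preceding lemma. The implication (ii) $\Longrightarrow$ (i) is tautological: condition (ii) in particular supplies $\tau_1 \in \Gamma_A$ with $c_1 = 1_A - d_{\tau_1} + d_{\tau_1}\circ \sigma_A$, which is precisely the defining condition of $\Gamma$-strongly continuous orbit equivalence.

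For the converse (i) $\Longrightarrow$ (ii), I would argue as follows. Assume $(X_A,\sigma_A)\underset{\Gamma-scoe}{\sim}(X_B,\sigma_B)$. By definition there exist a homeomorphism $h:X_A\longrightarrow X_B$ giving a continuous orbit equivalence and an element $\tau_1 \in \Gamma_A$ with $c_1 = 1_A - d_{\tau_1} + d_{\tau_1}\circ \sigma_A$. Applying the preceding lemma with $\tau = \tau_1$ immediately yields
\[
c_2 \;=\; 1_B - d_{\xi_h(\tau_1^{-1})} + d_{\xi_h(\tau_1^{-1})}\circ \sigma_B.
\]
Setting $\tau_2 := \xi_h(\tau_1^{-1}) = h \circ \tau_1^{-1}\circ h^{-1}$, one has $\tau_2 \in \Gamma_B$ because $\xi_h$ carries $\Gamma_A$ onto $\Gamma_B$, and the required identity $\tau_2 = h\circ \tau_1^{-1}\circ h^{-1}$ holds by the very definition of $\tau_2$. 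Together with the two displayed formulas for $c_1$ and $c_2$, this establishes condition (ii).

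The main obstacle has in fact already been dealt with: it lies in the previous lemma, where the cocycle identities $d_\tau\circ h^{-1} = d_{\xi_h(\tau)}\circ \xi_h(\tau^{-1}) = -d_{\xi_h(\tau^{-1})}$ had to be verified and the coboundary form of $c_1$ translated into that of $c_2$ via the homomorphism $\varPsi_h$, using \cite[Lemma 5.2]{MaJOT}, \cite[Corollary 3.8]{MaJOT} and \cite[Lemma 4.6]{MMETDS}. Once the lemma is in place, the proposition simply presents its content as an explicit symmetric pairing between an element $\tau_1 \in \Gamma_A$ and its $h$-conjugated inverse $\tau_2 = h\circ\tau_1^{-1}\circ h^{-1} \in \Gamma_B$, so no further work is required.
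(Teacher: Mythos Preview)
Your proof is correct and follows exactly the paper's approach: the paper does not even write out a proof, simply stating ``Therefore we have the following proposition'' after the lemma, treating it as the immediate repackaging you describe. Your identification of (ii) $\Rightarrow$ (i) as tautological and (i) $\Rightarrow$ (ii) as a direct application of the preceding lemma with $\tau_2 := \xi_h(\tau_1^{-1})$ is precisely what the paper intends.
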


\begin{lemma}\label{lem:6.4}
Keep the notation.
If the cocycle function $c_1 \equiv 1_A$, then the other 
 cocycle function $c_2 \equiv 1_B$.
 Hence  in this case, the one-sided topological Markov shifts 
$(X_A,\sigma_A)$ and $(X_B,\sigma_B)$ become eventually conjugate.
\end{lemma}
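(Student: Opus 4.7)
The plan is to deduce Lemma \ref{lem:6.4} as a degenerate instance of the immediately preceding lemma, by specializing to the identity element $\id \in \Gamma_A$. The key observation is that $\id$ satisfies \eqref{eq:tau} with the canonical choices $k_\id \equiv 0$ and $l_\id \equiv 0$, so $d_\id \equiv 0$. Thus the hypothesis $c_1 \equiv 1_A$ may be trivially rewritten as $c_1 = 1_A - d_\id + d_\id \circ \sigma_A$, which fits exactly into the form required by the preceding lemma for $\tau = \id$.

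Applying that lemma then produces $c_2 = 1_B - d_{\xi_h(\id^{-1})} + d_{\xi_h(\id^{-1})} \circ \sigma_B$. Since $\id^{-1} = \id$ and $\xi_h(\id) = h \circ \id \circ h^{-1} = \id$, the function $d_{\xi_h(\id^{-1})} = d_\id$ also vanishes, and this equality collapses to $c_2 \equiv 1_B$. Having now $c_1 \equiv 1_A$ and $c_2 \equiv 1_B$, the strongly continuous orbit equivalence data may be chosen with $b_1 \equiv 0$ and $b_2 \equiv 0$, which is precisely the definition of eventual conjugacy of $(X_A,\sigma_A)$ and $(X_B,\sigma_B)$.

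There is no substantial obstacle to this route, since the lemma is essentially a corollary of the preceding one; the only point to verify is the triviality $d_\id \equiv 0$, immediate from the canonical choice above. Should one prefer a self-contained argument bypassing the preceding lemma, the plan would instead be: first, notice that $c_1 \equiv 1_A$ turns the orbit equation into $\sigma_B^{k_1(x)}(h(\sigma_A(x))) = \sigma_B^{k_1(x)+1}(h(x))$, which already witnesses an eventual conjugacy from $(X_A,\sigma_A)$ toward $(X_B,\sigma_B)$; then invoke \cite[Lemma 4.3]{MaJOT} to produce $b_2 \in C(X_B,\Z)$ with $c_2 = 1_B + b_2 - b_2 \circ \sigma_B$; finally, use \cite[Lemma 5.4]{MaJOT} (which asserts $b_1 + b_2 \circ h$ is constant) with $b_1 \equiv 0$ to conclude that $b_2$ must be constant, so that $c_2 \equiv 1_B$.
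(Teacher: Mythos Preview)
Your proof is correct, but it takes a different route from the paper. The paper argues directly from the identities $c_1 = \varPsi_h(1_B)$ and $c_2 = \varPsi_{h^{-1}}(1_A)$ together with $\varPsi_{h^{-1}} = \varPsi_h^{-1}$ (\cite[Corollary 3.8]{MaJOT}): if $c_1 = 1_A$ then $\varPsi_h(1_B) = 1_A$, so applying $\varPsi_{h^{-1}}$ gives $1_B = \varPsi_{h^{-1}}(1_A) = c_2$. Your approach instead realizes Lemma \ref{lem:6.4} as the $\tau = \id$ case of the preceding lemma, which is a clean structural observation and entirely valid; note, however, that the preceding lemma's proof itself invokes exactly the same inverse relation $\varPsi_{h^{-1}} = \varPsi_h^{-1}$, so your route is a detour through a more general statement rather than a genuinely independent argument. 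The paper's proof is marginally more direct; yours has the virtue of making explicit that this lemma is a degenerate instance of the $\Gamma$-strongly continuous orbit equivalence machinery. Your alternative self-contained sketch is also sound.
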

\begin{proof}
Since $c_1 = \Psi_h(1_B)$ and $ c_2 = \Psi_{h^{-1}}(1_A),$
the condition $c_1 \equiv 1_A$ implies 
$c_2 \equiv 1_B$, because $\Psi_{h^{-1}} = \Psi_h^{-1}$
by \cite[Corollary 3.8]{MaJOT}.
\end{proof}

We will next prove the following proposition.
\begin{proposition}
$\Gamma$-strongly continuous orbit equivalence implies eventual conjugacy.
\end{proposition}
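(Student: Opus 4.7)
\medskip

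\noindent
\textit{Proof proposal.}
The plan is to upgrade the given $\Gamma$-strongly continuous orbit equivalence homeomorphism $h:X_A\longrightarrow X_B$ to an eventual conjugacy by pre-composing with the element $\tau \in \Gamma_A$ provided by the hypothesis. Specifically, set $\tilde h := h \circ \tau^{-1}:X_A\longrightarrow X_B$. This is a homeomorphism, and one checks that $\tilde h$ still gives rise to a continuous orbit equivalence between $(X_A,\sigma_A)$ and $(X_B,\sigma_B)$. The goal is then to show that valid cocycle functions $\tilde k_1, \tilde l_1 : X_A \to \Zp$ for $\tilde h$ satisfy $\tilde c_1 := \tilde l_1 - \tilde k_1 \equiv 1_A$, from which eventual conjugacy will follow directly from Lemma \ref{lem:6.4}.

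\medskip

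The key step is the explicit computation of $\tilde c_1$. Using $\sigma_A^{k_{\tau^{-1}}(x)}(\tau^{-1}(x)) = \sigma_A^{l_{\tau^{-1}}(x)}(x)$ at the points $x$ and $\sigma_A(x)$, one combines them to produce integers
\[
p := l_{\tau^{-1}}(x) + k_{\tau^{-1}}(\sigma_A(x)), \qquad
q := k_{\tau^{-1}}(x) + l_{\tau^{-1}}(\sigma_A(x)) + 1,
\]
satisfying $\sigma_A^p(\tau^{-1}(\sigma_A(x))) = \sigma_A^q(\tau^{-1}(x))$. Applying $h$ and then the iterated orbit-equivalence relation (\cite[Lemma 5.1]{MaPacific}) allows one to read off a valid pair
\[
\tilde k_1(x) = k_1^q(\tau^{-1}(x)) + l_1^p(\tau^{-1}(\sigma_A(x))), \qquad
\tilde l_1(x) = l_1^q(\tau^{-1}(x)) + k_1^p(\tau^{-1}(\sigma_A(x))),
\]
and hence
\[
\tilde c_1(x) = c_1^q(\tau^{-1}(x)) - c_1^p(\tau^{-1}(\sigma_A(x))).
\]
Using the hypothesis $c_1 = 1_A - d_\tau + d_\tau\circ\sigma_A$, a telescoping argument gives $c_1^n(z) = n - d_\tau(z) + d_\tau(\sigma_A^n(z))$. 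Substituting this and using that $\sigma_A^q(\tau^{-1}(x)) = \sigma_A^p(\tau^{-1}(\sigma_A(x)))$, the endpoint values of $d_\tau$ cancel and one gets
\[
\tilde c_1(x) = (q - p) - d_\tau(\tau^{-1}(x)) + d_\tau(\tau^{-1}(\sigma_A(x))).
\]
Finally, $q - p = 1 - d_{\tau^{-1}}(x) + d_{\tau^{-1}}(\sigma_A(x))$, and the identity $d_{\tau^{-1}}(y) = -d_\tau(\tau^{-1}(y))$ (which follows by applying Lemma \ref{lem:2.1} to $\tau\circ\tau^{-1} = \id$) forces the remaining four $d_\tau$-terms to cancel exactly, yielding $\tilde c_1 \equiv 1_A$.

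\medskip

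Once $\tilde c_1 \equiv 1_A$ is established, Lemma \ref{lem:6.4} provides the corresponding equality $\tilde c_2 \equiv 1_B$ for the other cocycle function of $\tilde h$, and this is precisely the definition of eventual conjugacy of $(X_A,\sigma_A)$ and $(X_B,\sigma_B)$. The main obstacle is simply the careful bookkeeping in the middle step: one must juggle the several indices $k_1, l_1, k_{\tau^{-1}}, l_{\tau^{-1}}$ and their iterates, and recognize the exact cancellation that requires the $d_{\tau^{-1}}$ vs.\ $d_\tau \circ \tau^{-1}$ identity. Everything else is a direct application of the cocycle formulas already recorded in Lemma \ref{lem:2.1}, Lemma \ref{lem:htauh}, and \eqref{eq:fmn}.
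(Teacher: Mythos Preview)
Your proposal is correct and is essentially the same argument as the paper's. Both proofs replace $h$ by the composite $h\circ\tau^{-1}$; since $\tau_2=\xi_h(\tau^{-1})=h\circ\tau^{-1}\circ h^{-1}$, the paper's $h'=\tau_2\circ h$ coincides with your $\tilde h=h\circ\tau^{-1}$. The only difference is in the bookkeeping of the verification: the paper works on the $X_B$ side via the identity $d_{\tau_2}\circ h=-d_{\tau_1}$ (from the preceding lemma) and checks the eventual-conjugacy relation directly, while you work on the $X_A$ side, compute $\tilde c_1\equiv 1_A$ using $d_{\tau^{-1}}=-d_\tau\circ\tau^{-1}$, and then invoke Lemma~\ref{lem:6.4}. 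Your route is slightly more self-contained in that it does not need the explicit form of $c_2$ established in the preceding lemma, but the underlying idea and the modified homeomorphism are identical.
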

\begin{proof}
Suppose that there exists a homeomorphism $h:X_A\longrightarrow X_B$
that gives rise to $(X_A,\sigma_A) \underset{\Gamma-scoe}{\sim}(X_B, \sigma_B)$
such that 
\begin{equation*}
c_1 = 1_A -d_{\tau_1} + d_{\tau_1} \circ \sigma_A
\quad
\text{ and }
\quad
c_2 = 1_B -d_{\tau_2} + d_{\tau_2} \circ \sigma_B
\end{equation*}
 for some 
$\tau_1 \in \Gamma_A$ and 
$\tau_2 \in \Gamma_B$ satisfying $\tau_2 = h\circ \tau_1^{-1}\circ h^{-1}.$
As in \eqref{eq: step 25},
we have
\begin{equation}\label{eq:step31}
d_{\tau_2}\circ h = - d_{\tau_1}.
\end{equation}
Let $k_1:X_A\longrightarrow \Z$ be the continuous function appeard in \eqref{eq:orbiteq1x}.
Take $K \in \N$ such as 
$K =\Max\{k_1(x), -d_{\tau_2}(h(x))\mid x \in X_A\}.$
By using \eqref{eq:step31}, we then have 
\begin{align*}
\sigma_B^{K} (h(\sigma_A(x))) 
& = \sigma_B^{K + c_1(x)}(h(x)) \\
& = \sigma_B^{K + 1 - d_{\tau_1}(x) +  d_{\tau_1}(\sigma_A(x))}(h(x)) \\
& = \sigma_B^{K + 1 + d_{\tau_2}(h(x)) - d_{\tau_2}(h(\sigma_A(x)))}(h(x))
\end{align*}
so that 
\begin{equation*}
\sigma_B^{K + d_{\tau_2}(h(\sigma_A(x)))} (h(\sigma_A(x))) 
 = \sigma_B^{K + 1 + d_{\tau_2}(h(x))}(h(x)).
\end{equation*}
Since we have 
\begin{equation*}
\sigma_B^{l_{\tau_2}(h(y))} (h(y)) 
 = \sigma_B^{k_{\tau_2}(h(y))} (\tau_2(y))
\quad 
\text{ for all } y \in X_B,
\end{equation*}
we have
\begin{equation*}
\sigma_B^{K } (\tau_2(h(\sigma_A(x)))) 
 = \sigma_B^{K + 1}(\tau_2(h(x))).
\end{equation*}
By putting $h' = \tau_2\circ h: X_A\longrightarrow X_B$,
we obtain that 
\begin{equation*}
\sigma_B^{K } (h'(\sigma_A(x)))) 
 = \sigma_B^{K + 1}(h'h(x))), \qquad x \in X_A,
\end{equation*}
proving that 
$(X_A,\sigma_A)$ and $(X_B,\sigma_B)$ are eventually conjugate. 
\end{proof}
We thsu have the following theorem.
\begin{theorem}\label{thm:main6.6}
Let $A, B$ be irreducible, non-permutation matrices with entries in $\{0,1\}$.
Then the one-sided topological Markov shifts 
$(X_A, \sigma_A)$ and $(X_B, \sigma_B)$ are $\Gamma$-strongly continuous orbit equivalent
if and only if they are eventually conjugate. 
\end{theorem}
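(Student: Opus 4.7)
The plan is to prove the stated equivalence in two directions, of which only one requires any nontrivial work. The forward implication, from $\Gamma$-strong continuous orbit equivalence to eventual conjugacy, is precisely the content of the proposition immediately preceding the theorem, so I would simply cite it without reproving anything.

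For the converse, suppose $(X_A,\sigma_A)$ and $(X_B,\sigma_B)$ are eventually conjugate via a homeomorphism $h:X_A\to X_B$. By the definition of eventual conjugacy recalled at the start of this section (it is the special case $b_1\equiv 0$ of strong continuous orbit equivalence), one may choose the cocycle functions of the continuous orbit equivalence to be constant: $k_1\equiv N$ and $l_1\equiv N+1$ for some $N\in\N$, so that $c_1\equiv 1_A$. Now take $\tau:=\id_{X_A}\in\Gamma_A$. Then $k_\tau\equiv 0$ and $l_\tau\equiv 0$ trivially satisfy the defining equation $\sigma_A^{k_\tau(x)}(\tau(x))=\sigma_A^{l_\tau(x)}(x)$ of the continuous full group, whence $d_\tau\equiv 0$, and so
$$
1_A \;-\; d_\tau \;+\; d_\tau\circ\sigma_A \;=\; 1_A \;=\; c_1.
$$
This is exactly the condition in the definition of $\Gamma$-strong continuous orbit equivalence, so $(X_A,\sigma_A)\underset{\Gamma-scoe}{\sim}(X_B,\sigma_B)$.

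The main obstacle in the whole theorem therefore does not lie in the theorem itself but in the preceding proposition, where one must bootstrap from a cocycle of the form $c_1=1_A-d_{\tau_1}+d_{\tau_1}\circ\sigma_A$ to a uniform integer $K\in\N$ witnessing eventual conjugacy; the key device there is the identity $d_{\tau_2}\circ h=-d_{\tau_1}$ combined with the replacement $h\rightsquigarrow \tau_2\circ h$. Given that proposition, the converse direction is essentially free, since $\Gamma_A$ contains the identity and the coboundary $-d_\tau+d_\tau\circ\sigma_A$ can always be arranged to vanish identically.
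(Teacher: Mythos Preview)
Your proposal is correct and matches the paper's approach: the paper likewise treats Theorem~\ref{thm:main6.6} as an immediate consequence of the preceding proposition (for the forward direction) together with the observation that eventual conjugacy is the special case $c_1\equiv 1_A$, witnessed in the $\Gamma$-strong sense by $\tau=\id\in\Gamma_A$ (for the converse). No additional ideas are needed beyond what you have written.
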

As a corollary we know the following result.
\begin{corollary}\label{cor:main6.7}
Let $A, B$ be irreducible, non-permutation matrices with entries in $\{0,1\}$.
Then the following three conditions are equivalent:
\begin{enumerate}
\renewcommand{\theenumi}{\roman{enumi}}
\renewcommand{\labelenumi}{\textup{(\theenumi)}}
\item $(X_A,\sigma_A)$ and $(X_B,\sigma_B)$ are uniformly continuous orbit equivalent. 
\item $(X_A,\sigma_A)$ and $(X_B,\sigma_B)$ are eventually conjugate. 
\item $(X_A, \sigma_A)$ and $(X_B, \sigma_B)$ are $\Gamma$-strongly continuous orbit equivalent.
\end{enumerate}
\end{corollary}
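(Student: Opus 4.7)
The plan is to observe that Corollary \ref{cor:main6.7} is essentially a packaging of two already-established equivalences, so the proof will be a short deduction rather than any new argument. First I would invoke \cite[Theorem 1.5]{MaPAMS2017}, cited in the paper's introduction, which gives the equivalence of uniformly continuous orbit equivalence and eventual conjugacy, that is, (i)$\Leftrightarrow$(ii). Second, I would cite Theorem \ref{thm:main6.6} just proved, which establishes (ii)$\Leftrightarrow$(iii). Concatenating these two biconditionals yields the three-way equivalence claimed.

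In more detail, I would structure the proof as a single line of implications: (i) $\Longleftrightarrow$ (ii) by \cite[Theorem 1.5]{MaPAMS2017}, and (ii) $\Longleftrightarrow$ (iii) by Theorem \ref{thm:main6.6}, hence all three conditions are mutually equivalent. Since both ingredients are already proven (or quoted), no additional construction, auxiliary lemma, or obstacle-handling is required at this stage.

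There is essentially no main obstacle, because the genuine work has been done in Theorem \ref{thm:main6.6} (which itself rested on Lemma \ref{lem:6.4} and the explicit construction of the map $h' = \tau_2 \circ h$ realizing an eventual conjugacy). The only thing to verify is that the hypotheses of Theorem \ref{thm:main6.6} and of \cite[Theorem 1.5]{MaPAMS2017} match the hypotheses on $A, B$ in the corollary, namely that they are irreducible non-permutation $\{0,1\}$-matrices; this is consistent with the standing assumption in the paper. Thus the proof will be essentially two sentences, with the body reading: \emph{The equivalence between (i) and (ii) is \cite[Theorem 1.5]{MaPAMS2017}, and the equivalence between (ii) and (iii) is Theorem \ref{thm:main6.6}.}
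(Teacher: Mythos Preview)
Your proposal is correct and matches the paper's own proof essentially verbatim: the paper also proves the corollary in two sentences, citing \cite{MaPAMS2017} for (i)$\Longleftrightarrow$(ii) and Theorem~\ref{thm:main6.6} for (ii)$\Longleftrightarrow$(iii). No additional work is needed.
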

\begin{proof}
The equivalence (i) $\Longleftrightarrow$ (ii) was proved in \cite{MaPAMS2017}.
The equivalence (ii) $\Longleftrightarrow$ (iii) is shown in Theorem \ref{thm:main6.6}.
\end{proof}

%%%%%%%%%%%%%%%%%%%%%%%%%%%%%%%%%%%%%%%%%%%%%%%%%%%
%%%%%%%%%%%%%%%%%%%%%%%%%%%%%%%%%%%%%%%%%%%%%%%
%%%%%%%%%%%%%%%%%%%%%%%%%%%%%%%%%%

%\newpage

\bigskip

{\it Acknowledgment:}
This work was supported by JSPS KAKENHI 
Grant Numbers 15K04896, 19K03537.

\end{document}